\documentclass[11pt, reqno]{amsart}

\usepackage{enumitem}

\usepackage{geometry}
\setlength{\topskip}{\ht\strutbox} 

\geometry{paper=a4paper,left=30mm,right=35mm,top=40mm}

\usepackage{color}
\usepackage{amssymb}
\usepackage{amsmath}
\usepackage{arydshln}
\usepackage{hyperref}
\usepackage{bbm}
\usepackage{mathrsfs}
\usepackage{amsthm}
\usepackage{enumitem}

\usepackage{soul}

\usepackage[T1]{fontenc}
\usepackage{dsfont} 
\usepackage{tikz}
\usetikzlibrary{matrix}

\def\BibTeX{{\rm B\kern-.05em{\sc i\kern-.025em b}\kern-.08em
    T\kern-.1667em\lower.7ex\hbox{E}\kern-.125emX}}

\hfuzz1pc 

\numberwithin{equation}{section}

\DeclareMathOperator*{\argmax}{arg\,max}

\newcommand{\R}{\mathbb{R}}

\newcommand{\1}{\mathbbm{1}}

\newtheorem{Theorem}{Theorem}[section]

\newtheorem{Corollary}[Theorem]{Corollary}
\newtheorem{Lemma}[Theorem]{Lemma}
\newtheorem{Remark}[Theorem]{Remark}
\newtheorem{Definition}[Theorem]{Definition}
\newtheorem{Example}[Theorem]{Example}
\newtheorem{Assumption}{Assumption}
\newtheorem{Construction}[Theorem]{Construction}

\bibliographystyle{alpha}
\begin{document}

\title[Copulas in arbitrary dimension]{Copula measures and Sklar's Theorem \\in Arbitrary Dimensions}

\author{Fred Espen Benth}
\address[Fred Espen Benth]{Department of Mathematics\\ University of Oslo, Norway}
\email[Fred Espen Benth]{fredb@math.uio.no}

\author{Giulia Di Nunno}
\address[Giulia Di Nunno]{Department of Mathematics\\ University of Oslo, Norway}
\email[Giulia Di Nunno]{giulian@math.uio.no}

\author{Dennis Schroers}
\address[Dennis Schroers]{Department of Mathematics\\ University of Oslo, Norway}
\email[Dennis Schroers]{dennissc@math.uio.no}

\date{\today}

\subjclass[2010]{Primary 60G07; Secondary 62H05}

\keywords{Copulas, Sklar's theorem, random variables and laws in infinite dimensions, Wasserstein space, optimal coupling}

\maketitle

\begin{abstract}
Although copulas are used and defined for various infinite-dimensional objects (e.g. Gaussian processes and Markov processes), there is no prevalent notion of a copula
that unifies these concepts.
We propose a unified approach and define copulas as probability measures on general product spaces. For this we prove Sklar's Theorem in this infinite-dimensional setting. We show how to transfer this result to various function space settings and describe how to model and approximate dependent probability measures in these spaces in the realm of copulas. 
\end{abstract}
\allowdisplaybreaks
\section{Introduction}
The investigation of linear and nonlinear dependence structures between the elements in an arbitrary family of random objects are inherent in many problems, ranging from modelling dependence within Markov processes (see e.g. \cite{Darsow1992},\cite{Lageraas2010},  \cite{Ibragimov2009}), Gaussian processes, and general processes with continuous marginals (see e.g. \cite{WilsonGharamani2010}), to the modelling of dependence between semimartingale processes (see e.g. \cite{Kallsen2006},\cite{Bielecki2010}) or the components of a random measure 
(see e.g. \cite{Blei2018}). One of the most powerful tools, which captures the whole structure of statistical dependence for a finite collection of real-valued random variables, are copulas. 

Copulas are cumulative distribution functions with uniform marginals, which can generally be interpreted as the dependence structure separated from the laws of the marginals by virtue of Sklar's Theorem.
The theory for copulas is 
rather well-developed for the 
finite-dimensional case (see \cite{Nelsen2006} for an introduction into the topic). In this paper we 
develop a general theory of copulas in infinite dimensions.

Relying on the one-to-one correspondence of probability measures and distribution functions in finite dimensions, we will introduce copulas as probability measures with uniform marginal distributions on product spaces. That is, copulas are treated as laws of families $
(U_i)_{i\in I}$ 
where $I$ is an arbitrary index set and $U_i\in L^0(\Omega;\mathbb{R})$ are real-valued uniformly distributed random variables on a probability space $(\Omega,\mathcal{F},\mathbb{P})$.

We prove Sklar's theorem in this general setting: The first part of this theorem states that each probability law on $\mathbb{R}^I$ possesses an underlying copula measure (representing its dependence structure), whereas the second part enables us to merge together any copula measure with a freely chosen family of  one-dimensional marginals 
to a law on the product space (with the copula measure as the specified dependence structure).
The framework here suggested is well suited for the general setup of real-valued stochastic processes.
Nevertheless, when setting an eye towards applications, e.g. numerical approximations or (functional) data analysis, it is relevant to have sufficient knowledge about  various properties like integrability or regularity of the processes. Thus it is inevitable to consider the construction of and the inference for measures in (topological) function spaces.

Unfortunately, the advent of copulas in function spaces is subject to some nontrivial difficulties compared to the finite-dimensional setting:
First of all, it is not immediately clear what marginals are in an infinite-dimensional vector space.
If $X$ is a random variable in a Hilbert space $H$, projections onto an orthonormal basis $(\langle X,e_n\rangle_H)_{n\in \mathbb N}$ are immediate candidates. Nevertheless, if, in addition, the space considered is a reproducing kernel Hilbert space of functions, say over $[0,1]$, an equally natural option for marginals are function evaluations $(X(t))_{t\in [0,1]}$. This motivates the introduction of a flexible framework, which is needed, but not yet existing. We therefore propose a general concept of marginals for measurable vector spaces.

Another critical point in the infinite-dimensional setting is that even if we fix a certain notion for marginals and then construct a measure with some given dependence structure (i.e. a copula) and marginals via Sklar's theorem, this measure is not necessarily a Borel measure on the desired function space, but may rather be just a 
cylindrical premeasure. Proving whether a cylindrical premeasure actually corresponds to a proper probability measure is a considerably difficult task. We will refer to this as the \textit{construction problem}, in the context of applying the second part of Sklar's Theorem. 
In applications, we further wish to be flexible in the choice of copulas and marginals and hence want to avoid to be overly restrictive by stating complicated conditions on the mutual behaviour of dependence structure and one-dimensional distributions.
 Otherwise useful criteria, as for instance those based on compactness arguments (e.g. Theorem 6.2 in \cite{Buldygin2000}), may turn to be be cumbersome to translate into feasible criteria to overcome the construction problem. Nevertheless, there are several important situations in which one can find a satisfying framework to work with. A major part of our work describes respective constructions, namely in the space of continuous functions $C(T)$, in Hölder spaces, in the Lebesgue spaces $L^p(T)$, and in the sequence spaces $l^p$. 

For the latter two cases simple moment criteria on the marginals prove to be sufficient (and sometimes even necessary), what makes them attractive in practice.
The suitability of these criteria is confirmed by the characterizing connections to Wasserstein spaces under these conditions, as copulas could equivalently be introduced as optimal solutions to a restricted optimization problem in the corresponding Wasserstein spaces. Even more, in the case that two random variables over $l^p$ or $L^p(T)$ respectively share the same copula, we can express the corresponding Wasserstein distance in terms of the Wasserstein distances of their one-dimensional marginal distributions (and hence in closed form).
This is apparently useful, when we want to approximate distributions for which we have the corresponding copula at hand, but also in the presumably more frequently encountered situation, 
in which both marginals and copula are unknown. 
In fact, we can conveniently bound the $L^p(T)$ distance of two random variables from above by the Wasserstein distance of their one-dimensional marginals and the $L^q(T)$-distance of the corresponding underlying copula processes (i.e. processes that have the corresponding copulas as their laws) for any $q\geq 1$,  
under suitable smoothness and tail assumptions on the marginals of one of the variables. We finally demonstrate how to apply this bound in order to approximate (heavy-tailed and tail-dependent) stochastic processes with underlying elliptical copula and regularly varying marginals.

The paper is organised as follows.
We describe the basic framework of copulas in product spaces and prove Sklar's Theorem in section \ref{sec: Product space copulas}. Section \ref{sec: Copulas in function spaces} is devoted to copula constructions in function spaces, where in subsection \ref{Marginals and the construction Problem} we introduce a general framework for marginals in measurable vector spaces and describe the abstract construction problem. Subsection \ref{sec: Solutions to the construction problem} presents criteria to overcome the latter in various function spaces.
Finally, section \ref{sec: Robustness of the copula construction} provides distance estimates for the copula construction, where in subsection \ref{sec: Copulas and Wasserstein spaces} we describe the connection of copulas to Wasserstein spaces and in
subsection \ref{sec: Lp estimate} we derive an estimate of the $L^p(T)$ distance of two processes in terms of the difference of the underlying copula and the one-dimensional Wasserstein distances of their marginals.

\subsection*{Notation} 
 For any measure $\mu$ on a measurable space $(B,\mathcal B)$ and a measurable function $f:(B,\mathcal B)\to (A,\mathcal A)$ into another measurable space $(A,\mathcal A)$ we denote by $f_*\mu$ the pushforward measure 
 with respect to $f$ given by $f_*\mu(S):=\mu( f^{-1}(S))$ for all $S\in \mathcal A$.
 If $B=\mathbb{R}^I$, where $I$ is an arbitrary index set, and $\mathcal B=\otimes_{i\in I} \mathcal{B}(\mathbb R)$, we use the shorter notations $\pi_{J*}\mu=:\mu_J$ for a subset $J\subseteq I$ and $\pi_{\lbrace i\rbrace*}\mu=:\mu_i$ for an element $i\in I$, where $\pi_J$ denotes the projection on $\mathbb{R}^J$.  If $J\subset I$ is finite, we denote the corresponding finite-dimensional cumulative distribution functions by $F_{\mu_J}$ or $F_{\mu_i}$ respectively.
 We will frequently refer to the one-dimensional distributions $\mu_i,i\in I$ and equivalently $F_{\mu_i},i\in I$ as marginals of the measure $\mu$. 
 Throughout the paper all random variables are considered on a complete probability space $(\Omega,\mathcal F,\mathbb P)$ and we write $L^0(\Omega,\mathcal F;A,\mathcal A)=:L^0(\Omega;A)$ for the measurable functions $f:(\Omega,\mathcal F)\to (A,\mathcal A)$, i.e., $A$-valued random variables.

\section{Copulas and Sklar's Theorem in Infinite Dimensions}\label{sec: Product space copulas}
 Following the natural interpretation of copulas as measures in finite dimensions (see section \ref{sec: Copulas in finite dimensions} for a short treatment of copulas in finite dimensions), we suggest to define the concept in the same line also in infinite dimensions:
\begin{Definition}\label{T: Consistent Copulas}
A 
 copula measure (or simply copula) on $\mathbb{R}^I$ is a probability measure $C$ on $\otimes_{i\in I} \mathcal{B}(\mathbb R)$, such that its marginals $C_i$ are uniformly distributed on $[0,1]$.
\end{Definition}
For finite-dimensional index sets $I$ the notions of measures and cumulative distribution functions have a one-to-one correspondence, which is the reason why in this case a copula measure $C$ can be uniquely identified with the copula $F_C$
 in the classical sense of Definition \ref{D: Finite dimensional Copula}. For the same reason the finite-dimensional distributions $C_J$ of an infinite-dimensional copula measure $C$, correspond uniquely to the copula $F_{C_J}$ in the familiar sense of copulas in finite dimensions.
 
We also introduce the important notion of \textit{copula processes}.
\begin{Definition}
We call a random variable $U\in L^0(\Omega;\mathbb R^I)$ 
with uniform marginals on $[0,1]$ a \textit{copula process}. That is, the law of a copula process is a 
 copula measure.
 \end{Definition}
 Since for each copula we can find a probability space, and a copula process with law $C$ on it, the notion of copulas has a one-to-one correspondence with the one of copula processes.

As in finite dimensions, the most important result for the use of copulas is
Sklar's Theorem: 
\begin{Theorem}[Sklar's Theorem]\label{T: Sklar in infinite dimensions}
Let $I$ be an index set and $\mu$ be a probability measure on $\otimes_{i\in I} \mathcal{B}(\mathbb R)$ with marginal one-dimensional distributions $\mu_i, i\in I$.
There exists a copula measure $C$, such that for each finite subset $J\subseteq I$, we have
\begin{equation}\label{Sklar Property}
F_{C_J}\left(\left(F_{\mu_{j}}\left(x_{j}\right)\right)_{j\in J}\right)=F_{\mu_J}\left(\left(x_{j}\right)_{j\in J}\right)
\end{equation} 
for all $(x_{j})_{j\in J}\in \mathbb{R}^{ J}$. Moreover, $C$ is unique if $F_{\mu_{i}}$ is continuous for each $i\in I$.
Vice versa, let $C$ be a copula measure on $\mathbb{R}^I$ and let $(\mu_i)_{i\in I}$ be a collection of (one-dimensional) Borel probability measures over $\mathbb R$. 
Then there exists a unique probability measure $\mu$ on $\otimes_{i\in I} \mathcal{B}(\mathbb R)$, such that 
\eqref{Sklar Property} holds. 
\end{Theorem}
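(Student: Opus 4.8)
The plan is to reduce both halves of the statement to the one-dimensional quantile transform together with its randomised refinement, the \emph{distributional transform}, and to the elementary fact that a probability measure on $\otimes_{i\in I}\mathcal B(\mathbb R)$ is determined by its finite-dimensional marginals, the finite cylinders forming a $\pi$-system that generates this $\sigma$-algebra.

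For the first part I would realise $\mu$ as the law of the coordinate process $(X_i)_{i\in I}$ on $(\mathbb R^I,\otimes_{i\in I}\mathcal B(\mathbb R),\mu)$ and enlarge the space by an independent family $(V_i)_{i\in I}$ of i.i.d.\ uniform $[0,1]$ variables, which exists on $\mathbb R^I\times[0,1]^I$ with $\mu\otimes\bigotimes_{i\in I}\lambda_{[0,1]}$ (the infinite product of probability measures being classical for an arbitrary index set). Writing $F_{\mu_i}^{-1}$ for the left-continuous generalised inverse, I set
\[
U_i:=F_{\mu_i}(X_i-)+V_i\bigl(F_{\mu_i}(X_i)-F_{\mu_i}(X_i-)\bigr),\qquad i\in I,
\]
so that, by the classical properties of the distributional transform, each $U_i$ is uniformly distributed on $[0,1]$ and $F_{\mu_i}^{-1}(U_i)=X_i$ almost surely. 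Let $C$ be the law of $(U_i)_{i\in I}$; it is a copula measure by construction. Using the quantile identity $\{F_{\mu_j}^{-1}(u)\le x_j\}=\{u\le F_{\mu_j}(x_j)\}$ together with the a.s.\ equality above, for finite $J\subseteq I$ and $(x_j)_{j\in J}\in\mathbb R^J$ I obtain
\[
F_{\mu_J}\bigl((x_j)_{j\in J}\bigr)=\mathbb P\bigl(F_{\mu_j}^{-1}(U_j)\le x_j,\ j\in J\bigr)=\mathbb P\bigl(U_j\le F_{\mu_j}(x_j),\ j\in J\bigr)=F_{C_J}\bigl((F_{\mu_j}(x_j))_{j\in J}\bigr),
\]
which is \eqref{Sklar Property}. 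If every $F_{\mu_i}$ is continuous, then $(0,1)\subseteq\mathrm{ran}(F_{\mu_j})$ by the intermediate value theorem, so for finite $J$ any copula $C'$ satisfying \eqref{Sklar Property} has $F_{C'_J}$ determined on $(0,1)^J$, where it must coincide with $F_{C_J}$; since each finite-dimensional projection of a copula is concentrated on $(0,1)^J$ and determined by its distribution function there (via inclusion--exclusion), $C'_J=C_J$ for all finite $J$, hence $C'=C$.

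For the converse, given $C$ and marginals $(\mu_i)_{i\in I}$, I would take a copula process $(U_i)_{i\in I}$ with law $C$ (say the coordinate process on $(\mathbb R^I,\otimes_{i\in I}\mathcal B(\mathbb R),C)$), put $X_i:=F_{\mu_i}^{-1}(U_i)$ --- a measurable map into $(\mathbb R^I,\otimes_{i\in I}\mathcal B(\mathbb R))$ since each coordinate is measurable --- and let $\mu$ be its law. Then $\mathbb P(X_i\le x)=\mathbb P(U_i\le F_{\mu_i}(x))=F_{\mu_i}(x)$, so $\mu$ has the prescribed marginals, and the computation above, read in reverse, yields \eqref{Sklar Property}. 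Uniqueness now needs no continuity assumption: \eqref{Sklar Property} exhibits $F_{\mu_J}$ as the fixed function $F_{C_J}\circ(F_{\mu_j})_{j\in J}$ on all of $\mathbb R^J$, so every finite-dimensional distribution of $\mu$, and hence $\mu$ itself, is uniquely determined by the pair $(C,(\mu_i)_{i\in I})$.

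The one genuinely delicate point is the first part when some $F_{\mu_i}$ has jumps: the naive choice $C=\mathrm{law}\bigl((F_{\mu_i}(X_i))_{i\in I}\bigr)$ then fails to reproduce the finite-dimensional laws of $\mu$, and the randomisation built into the distributional transform, supplied by the auxiliary independent uniforms $(V_i)_{i\in I}$, is precisely what repairs this. Once that device is in place the remainder is bookkeeping with generalised-inverse identities and the standard fact that finite-dimensional laws determine a measure on a product $\sigma$-algebra.
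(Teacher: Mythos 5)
Your proof is correct and follows essentially the same route as the paper: a distributional transform of a process with law $\mu$ for the existence part, the quantile-function pushforward of $C$ for the converse, and determination of measures on $\otimes_{i\in I}\mathcal B(\mathbb R)$ by their finite-dimensional laws for the uniqueness claims. The only cosmetic difference is that you randomise the jumps with an independent family $(V_i)_{i\in I}$ and verify the finite-dimensional identity \eqref{Sklar Property} directly from $F_{\mu_i}^{[-1]}(U_i)=X_i$ a.s., whereas the paper uses a single common uniform $U$ and invokes the finite-dimensional distributional-transform theorem of R\"uschendorf; both devices produce a valid underlying copula.
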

In the following proof and the rest of the paper we often use for a one-dimensional Borel measure $\mu_i$ on $\mathbb R$ the notation $F_{\mu_i}^{[-1]}$ for the quantile functions
\begin{equation}\label{Inverse Transform}
    F_{\mu_i}^{[-1]}(u) := \inf \left\lbrace x\in(-\infty,\infty) : F_{\mu_i}(x)\geq u\right\rbrace,
\end{equation}
\begin{proof}
To prove the first part, let $(X_i)_{i\in I}$ be a random vector having $\mu$ as its law. Let $U$ be a standard uniformly distributed real-valued random variable on the same probability space, such that $U$ is independent of $(X_i)_{i\in I}$. 
For a one-dimensional distribution function we denote its left-limit by
$F_{\mu_i}(x-):=\lim_{y\uparrow x}F_{\mu_i}(y).$ 
Define the distributional transform process $(U_i)_{i\in I}$ by
$$U_i:=F_{\mu_i}(X_i-)+U\left(F_{\mu_i}(X_i)-F_{\mu_i}(X_i-)\right)$$
and $C$ to be the law of $(U_i)_{i\in I}$. Since each $U_i$ is uniformly distributed on $[0,1]$ and the finite-dimensional laws $C_J$ fulfill \eqref{Sklar Property} by Theorem \ref{T: Rüschendorf Transform}, $C$ is the copula measure we looked for.
Observe that in case of continuous marginals all finite-dimensional marginals of $C$ must be uniquely determined by the unique copulas of the finite-dimensional laws of $\mu$ induced by Sklar's Theorem in finite dimensions.

To prove the other direction of Sklar's Theorem,
observe that, since $F_{\mu_i}^{[-1]}:[0,1]\rightarrow\mathbb R$ is measurable for every $i\in I$ 
we have that $(F_{\mu_i}^{[-1]})_{i\in I}$ 
 is a measurable map from the product space $([0,1]^{I},\otimes_{i\in I}\mathcal{B}([0,1]),C)$ to $(\mathbb{R}^{I},\otimes_{i\in I}\mathcal B (\mathbb{R}))$.
 The measure $\mu$ on $\otimes_{i\in I}\mathcal{B}(\mathbb R)$ given by the corresponding pushforward measure
\begin{equation}\label{Explicit form of seond part Sklar}
\mu:=((F_{\mu_i}^{[-1]})_{i\in I})_*C 
\end{equation}
has the desired properties.
To see this, we just have to verify that $\mu$ has the finite-dimensional distributions induced by (\ref{Sklar Property}).
Observe that, for all $i\in I$, by the monotonicity of the cumulative distribution functions we have that, for all $x\in (-\infty,\infty)$, 
\begin{align*}
 \left\lbrace u\in [0,1]:F_{\mu_i}^{[-1]}(u)\leq x\right\rbrace \supseteq \left\lbrace u\in [0,1]:u< F_{\mu_i}(x)\right\rbrace = [0,F_{\mu_i}(x))
\end{align*}
 and
\begin{align*}
\left\lbrace u\in [0,1]:F_{\mu_i}^{[-1]}(u)\leq x\right\rbrace\subseteq \left\lbrace u\in [0,1]:u\leq F_{\mu_i}(x)\right\rbrace=[0,F_{\mu_i}(x)].
\end{align*}
Thus, for $J\subset I$ finite, we have for all $(x_j)_{j\in J}\in\mathbb{R}^J$ that
\begin{align*}
\left([0,F_{\mu_{j}}(x_j)]\right)_{j\in J}\setminus \left(\left\lbrace u\in [0,1]:F_{\mu_{j}}^{[-1]}(u)\leq  x_j\right\rbrace\right)_{j\in J}
\subseteq \left(\left\lbrace F_{\mu_{j}}(x_j)\right\rbrace\right)_{j\in J}
\end{align*}
is a $C_J$ nullset. Therefore we obtain
\begin{align*}
  C_J\left(\left(\left(F_{\mu_j}^{[-1]}\right)^{-1}(-\infty,x_1]\right)_{j\in J}\right)
   = & C_J\left(\left(\left\lbrace u\in [0,1]:F_{\mu_j}^{[-1]}(u)\leq x_j\right\rbrace\right)_{j\in J}\right)\\
   = & C_J\left(\left([0,F_{\mu_j}(x_j)]\right)_{j\in J}\right) \\=&F_{C_J}\left(F_{\mu_j}\left(\left(x_j\right)\right)_{j\in J}\right).
\end{align*}
This concludes the proof.
\end{proof}
\begin{Remark}
If $I$ is a finite set Theorem \ref{T: Sklar in infinite dimensions} coincides with Sklar's Theorem \ref{T: Finite Sklar} in finite dimensions  by identifying the copula measure uniquely with its corresponding cumulative distribution function.
\end{Remark}
\begin{Remark}
From the proof above it follows that for a copula measure $C$ on $\mathbb{R}^I$ and a collection of marginals $(\mu_i)_{i\in I}$,
the pushforward measure in  \eqref{Explicit form of seond part Sklar} represents a probability measure $\mu$ on $\mathbb{R}^I$ having this underlying copula and marginals.
\end{Remark}
In \cite{CuestaAlbertos1993} the authors used the notion that two laws $\mu$ and $\nu$ on $(\mathbb{R}^I,\otimes_{i\in I}\mathcal B (\mathbb R))$ have the same dependence structure, if there exist two stochastic processes $X=(X_i)_{i\in I}$ and $Y=(Y_i)_{i\in I}$, such that $X\sim \mu$ and $Y\sim \nu$ on the same probability space $(\Omega,\mathcal F,\mathbb P)$ and $X_i$ and $Y_i$ are similarly ordered ($X_i\overset{\text{s.o.}}{\sim}Y_i$) for all $i\in I$, that is $$\left(X_i(\omega)-X_i(\omega')\right)\left(Y_i(\omega)-Y_i(\omega')\right)\geq 0 \quad  \mathbb P \otimes \mathbb P \text{ a.s.}.$$ 
In finite dimensions this notion is equivalent to the existence of a common underlying copula by virtue of Sklar's Theorem \ref{T: Finite Sklar}. This is also valid in infinite dimensions, as we show next.
Later, this fact will play a crucial in transferring the theory for optimal couplings of stochastic processes as treated in \cite{CuestaAlbertos1993} to our copula setting and in proving therewith approximation results in section \ref{sec: Robustness of the copula construction}.
 \begin{Lemma}\label{Lem: Same copulas means similarly ordered}
 Two probability measures $\mu$ and $\nu$ on $\otimes_{i\in I}\mathcal B(\mathbb R)$ have a common underlying copula measure in the sense of (\ref{Sklar Property}) if and only if $X_i\overset{\text{s.o.}}{\sim}Y_i$ for all $i\in I$.
 \end{Lemma}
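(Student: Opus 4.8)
Below is how I would approach this.

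As in the paragraph preceding the statement, the right-hand condition of the equivalence means that there exist $X=(X_i)_{i\in I}\sim\mu$ and $Y=(Y_i)_{i\in I}\sim\nu$ on one probability space---which we may take rich enough to also carry an independent standard uniform variable, exactly as in the proof of Theorem~\ref{T: Sklar in infinite dimensions}---with $X_i\overset{\text{s.o.}}{\sim}Y_i$ for every $i\in I$. I would establish the two implications separately; the first is short, the second carries the weight.

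\emph{From a common copula to similar ordering.} Let $C$ be a copula measure satisfying \eqref{Sklar Property} both for $\mu$ with marginals $(\mu_i)_{i\in I}$ and for $\nu$ with marginals $(\nu_i)_{i\in I}$, and realise a copula process $U=(U_i)_{i\in I}$ with law $C$. Put $X_i:=F_{\mu_i}^{[-1]}(U_i)$ and $Y_i:=F_{\nu_i}^{[-1]}(U_i)$. By \eqref{Explicit form of seond part Sklar} the pushforward $((F_{\mu_i}^{[-1]})_{i\in I})_*C$ is a probability measure satisfying \eqref{Sklar Property} with copula $C$ and marginals $(\mu_i)_{i\in I}$, hence equals $\mu$ by the uniqueness in Theorem~\ref{T: Sklar in infinite dimensions}; thus $X\sim\mu$, and similarly $Y\sim\nu$. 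Since $F_{\mu_i}^{[-1]}$ and $F_{\nu_i}^{[-1]}$ are nondecreasing, $X_i$ and $Y_i$ are nondecreasing functions of the common variable $U_i$, so $\big(X_i(\omega)-X_i(\omega')\big)\big(Y_i(\omega)-Y_i(\omega')\big)\geq 0$ for all $\omega,\omega'$, i.e.\ $X_i\overset{\text{s.o.}}{\sim}Y_i$.

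\emph{From similar ordering to a common copula.} The plan is to build, for each $i$ separately, a uniform variable $U_i$ coupled to \emph{both} one-dimensional laws, and then to take $C$ to be the law of $(U_i)_{i\in I}$. Fix $X\sim\mu$, $Y\sim\nu$ with $X_i\overset{\text{s.o.}}{\sim}Y_i$ for all $i$, a standard uniform $V$ independent of $(X,Y)$, put $a_i:=\max\big(F_{\mu_i}(X_i-),F_{\nu_i}(Y_i-)\big)$ and $b_i:=\min\big(F_{\mu_i}(X_i),F_{\nu_i}(Y_i)\big)$, and define the two-marginal distributional transform
\begin{equation*}
U_i:=a_i+V\,(b_i-a_i).
\end{equation*}
The first input is the classical fact that a similarly ordered (comonotone) coupling of two laws $\mu_i,\nu_i$ on $\mathbb R$ is unique in distribution and equals the image of the uniform law under $u\mapsto\big(F_{\mu_i}^{[-1]}(u),F_{\nu_i}^{[-1]}(u)\big)$; equivalently, the joint law of $(X_i,Y_i)$ is the upper Fr\'echet--Hoeffding coupling of $\mu_i$ and $\nu_i$. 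Granting this, and using the sandwich $F_{\mu_i}\big(F_{\mu_i}^{[-1]}(u)-\big)\leq u\leq F_{\mu_i}\big(F_{\mu_i}^{[-1]}(u)\big)$, the Galois relation $F_{\mu_i}^{[-1]}(u)\leq x\Leftrightarrow u\leq F_{\mu_i}(x)$, and their $\nu_i$-counterparts, I would check successively: that $b_i\geq a_i$ almost surely; that, conditionally on $(X_i,Y_i)$, $U_i$ is uniform on $[a_i,b_i]$, an interval whose length equals the mass of the joint law at $(X_i,Y_i)$ (degenerate off the at most countably many atoms); that therefore $U_i$ is uniformly distributed on $[0,1]$; and that $F_{\mu_i}^{[-1]}(U_i)=X_i$ and $F_{\nu_i}^{[-1]}(U_i)=Y_i$ almost surely, the possible exceptions lying in a countable set of interval endpoints, which the atomless $U_i$ meets with probability zero.

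It then remains to assemble the pieces. As each $U_i$ is measurable, $(U_i)_{i\in I}$ is an $\mathbb R^I$-valued random variable, so its law $C$ is a probability measure on $\otimes_{i\in I}\mathcal B(\mathbb R)$, and it is a copula measure since all its one-dimensional marginals are uniform on $[0,1]$. For finite $J\subseteq I$ and $(x_j)_{j\in J}\in\mathbb R^J$, using $X_j=F_{\mu_j}^{[-1]}(U_j)$ a.s.\ and the Galois relation,
\[
F_{\mu_J}\big((x_j)_{j\in J}\big)=\mathbb P\big(U_j\leq F_{\mu_j}(x_j)\ \text{for all }j\in J\big)=F_{C_J}\big((F_{\mu_j}(x_j))_{j\in J}\big),
\]
which is \eqref{Sklar Property} for $\mu$; the identical computation with $Y,\nu$ in place of $X,\mu$ yields \eqref{Sklar Property} for $\nu$ with the \emph{same} $C$, so $C$ is a common underlying copula. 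The one genuinely delicate step---and the one I expect to consume most of the work---is the discontinuous-marginal analysis just sketched: identifying the joint law of a similarly ordered pair as the comonotone coupling, and then verifying that $U_i$ really is uniform while simultaneously recovering $X_i$ and $Y_i$. When all $F_{\mu_i},F_{\nu_i}$ are continuous this collapses completely, since then $U_i=F_{\mu_i}(X_i)=F_{\nu_i}(Y_i)$ almost surely.
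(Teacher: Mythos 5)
Your forward implication is the same as the paper's: realise $U\sim C$, set $X_i=F_{\mu_i}^{[-1]}(U_i)$, $Y_i=F_{\nu_i}^{[-1]}(U_i)$, identify the laws through \eqref{Explicit form of seond part Sklar} and the uniqueness in Theorem~\ref{T: Sklar in infinite dimensions}, and use monotonicity of the quantile functions. For the converse, however, you take a genuinely different route: the paper disposes of the one-dimensional core in one line by citing Proposition~2.1 of \cite{CuestaAlbertos1993} (with the remark that no second moments are needed), which for each $i$ directly supplies a single uniform $U_i$ with $X_i=F_{\mu_i}^{[-1]}(U_i)$ and $Y_i=F_{\nu_i}^{[-1]}(U_i)$; you instead re-derive this fact by first identifying the law of a similarly ordered pair with the upper Fr\'echet--Hoeffding (comonotone) coupling and then running a two-marginal distributional transform $U_i=a_i+V(b_i-a_i)$, in the spirit of the R\"uschendorf transform (Theorem~\ref{T: Rüschendorf Transform}) that the paper uses in the first half of Sklar's theorem. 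Your final assembly (the law of $(U_i)_{i\in I}$ is a copula measure, and the Galois relation gives \eqref{Sklar Property} simultaneously for $\mu$ and $\nu$) coincides with the paper's. The trade-off is clear: the citation buys the paper exactly the part you only announce as a checklist --- the comonotonicity identification and the atom/flat-level bookkeeping showing that $U_i$ is uniform and that $F_{\mu_i}^{[-1]}(U_i)=X_i$, $F_{\nu_i}^{[-1]}(U_i)=Y_i$ almost surely. Those claims are all true and standard (the exceptional values are flat levels and jump endpoints of the marginals, a countable set that an atomless $U_i$ avoids), so your plan would go through, but to be a complete proof it would need those verifications written out, whereas the paper's proof is complete as stated modulo the external reference.
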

\begin{proof}
Let $\mu$ and $\nu$ have the same underlying copula measure $C$ and let $U\sim C$ be a corresponding copula process. Define with the notion introduced in \eqref{Inverse Transform} the random variables $X:=(F_{\mu_i}^{[-1]}(U_i))_{i\in I}$ and $Y=(F_{\nu_i}^{[-1]}(U_i))_{i\in I}$. By construction and analogously to the proof of Sklar's Theorem \ref{T: Sklar in infinite dimensions}, we obtain $X\sim ((F_{\mu_i}^{[-1]})_{i\in I})_*C=\mu$ and $Y\sim ((F_{\nu_i}^{[-1]})_{i\in I})_*C=\nu$. Since the quantile transforms $F_{\mu_i}
^{[-1]}$ and $F_{\nu_i}
^{[-1]}$ are nondecreasing functions, we obtain that $X_i\overset{\text{s.o.}}{\sim}Y_i$ for all $i\in I$.

Vice versa, let $X\sim \mu$ and $Y\sim \nu$ be two random variables such that $X_i\overset{\text{s.o.}}{\sim}Y_i$ for all $i\in I$. Then by Proposition 2.1 in \cite{CuestaAlbertos1993} for each $i\in I$ there exist a uniformly distributed random variable $U_i$ such that $X_i=F_{\mu_i}
^{[-1]}(U_i)$ and $Y_i=F_{\nu_i}
^{[-1]}(U_i)$ (observe that the proof of this assertion does not need second moments, as stated in Remark 1 in \cite{CuestaAlbertos1993}). If $C$ is the law of $U=(U_i)_{i\in I}$, we obtain $\mu=((F_{\mu_i}
^{[-1]})_{i\in I})_*C$ and $\nu=((F_{\nu_i}
^{[-1]})_{i\in I})_*C$. This shows, that $X$ and $Y$ have the same underlying copula measure $C$.
\end{proof}
The following examples review some existing concepts of copulas, which can be embedded into our framework: 
\begin{Example}\label{Ex: Complete and Independence Copula}(Complete dependence and independence copulas) The complete dependence copula measure on $\mathbb{R}^I$ is the law corresponding to the consistent family of finite-dimensional cumulative distribution functions given by $M_J((u_j)_{j\in J})=\min_{j\in J} u_j$. Observe, that its finite-dimensional distribution functions are Fr{\'e}chet-Hoeffding upper bounds for the corresponding finite-dimensional copulas, that is, for all $J\subset I$ finite and any copula $C$ on $\mathbb{R}
^I$ we have $$F_{C_J}\left(\left(u_j\right)_{j\in J}\right)\leq M_J\left(\left(u_j\right)_{j\in J}\right) \quad \forall (u_j)_{j\in J}\in [0,1]^J.$$

  The independence copula measure on $\mathbb{R}^I$ is the law of the consistent family of finite-dimensional cumulative distribution functions given by $\Pi_J((u_j)_{j\in J})=\Pi_{j\in J} u_j$.
\end{Example}

\begin{Example}
(Inversion method and Gaussian copulas)
Given a law $\mu$ with continuous marginals $F_{\mu_i}, i\in I$, the underlying copula measure $C$ induced by Sklar's Theorem \ref{T: Sklar in infinite dimensions} is given by its finite-dimensional cumulative distribution functions for each finite $J\subseteq I$ via
\begin{equation}
    F_{C_J}\left(\left(u_j\right)_{j\in J}\right):=F_{\mu_J}\left(\left(F_{\mu_j}^{-1}\left(u_j\right)\right)_{j\in J}\right)\quad \forall (u_j)_{j\in J}\in [0,1]^J.
\end{equation}
This method is known as inversion method (see e.g. \cite{Nelsen2006}).
In this way we can derive, for instance, the copula measures that are underlying a Gaussian process (that is, each $\mu_J$ is Gaussian), which are called Gaussian copulas. 
Infinite-dimensional Gaussian copulas where applied already for example in \cite{WilsonGharamani2010} in a machine learning context.
\end{Example}

\begin{Example}(Archimedean  copulas):   Fix a continuous, strictly decreasing and convex function $\phi:[0,1]\to[0,\infty]$ such that $\phi(1)=0$, and $\phi^{[-1]}$, its pseudoinverse, is given by
$$\phi^{[-1]}(x):=\begin{cases}
\phi^{-1}(x) & 0\leq x\leq \phi(0)\\
0 & x>\phi(0).
\end{cases}
$$
Then the finite-dimensional laws of an Archimedean copula measure are given by
$$F_{C_J}\left(\left(u_j\right)_{j\in J}\right):=\phi^{[-1]}\left(\sum_{j\in J} \phi\left(u_j\right)\right)$$
for each finite $J\subset I$.
By definition, these probability measures are exchangeable and it was shown in \cite{Constantinescu2011} that Archimedean copulas in infinite dimensions can be related to Dirichlet distributions.
\end{Example}

In addition, our framework accommodates also Markov copulas, introduced in \cite{Darsow1992} and developed also, e.g., in \cite{Lageraas2010}, \cite{Ibragimov2017}, \cite{Ibragimov2009}, and \cite{Bibbona2016}, copulas for time series introduced in \cite{Cherubini2012} and copulas in Hilbert spaces from \cite{MR3574701}.

\section{Copulas in Function spaces}\label{sec: Copulas in function spaces}
In this section we formulate a unified setting for the notion of copulas in the framework of vector spaces. 
 
 \subsection{Marginals in Vector Spaces}\label{Marginals and the construction Problem}
Let $V$ be a vector space over $\mathbb R$ 
and $\mathcal V$ a $\sigma$-algebra over this space. Recall that the algebraic dual of $V$ is defined as the vector space
\begin{equation*}
    Hom(V,\mathbb R):=\left\lbrace \varphi:V\to \mathbb R: \varphi\text{ is linear}\right\rbrace.
\end{equation*} 
\begin{Definition}[$M$-Marginals]\label{D: W-Marginals}
Let $X$ be a random variable on $V$. Let $M$ be a linearly independent subspace of measurable functions in $Hom(V,\mathbb R)$ that separates the points $V$. Then we call the random variables $(m(X):m\in M)$ the $M$-marginals of $X$.
\end{Definition}
Observe that by the definition above, we are able to embed the vector space framework into the framework of product spaces, by the embedding
\begin{equation}\label{Product space embedding}
    V\ni v\mapsto \left(m\left(v\right)\right)_{m\in M}\in \mathbb{R}^{M},
\end{equation}
which is necessary for the application of our copula theory. 

Some choices of $M$ which are of practical importance are given in the sequel. 
\begin{Example}[Marginals in finite dimensions]
In the finite-dimensional case, that is $V=\mathbb{R}^d$ for some $d\in\mathbb N$, $M$ is necessarily of the form
\begin{equation}\label{W Marginals for finite dimensions}
    M=\left\lbrace \langle e_1,\cdot\rangle,...,\langle e_d,\cdot\rangle\right\rbrace.
\end{equation}
for a basis $e_1,...,e_d$ of $\mathbb{R}^d$ and where $\langle \cdot,\cdot\rangle$ denotes an inner product on $\mathbb{R}^d$.
In terms of finite-dimensional copula theory, the natural choice is the standard basis $e_1=(1,0....,0)$, ... , $e_d=(0,...,0,1)$.
\end{Example}
\begin{Example}[Product space]\label{Ex: Marginals for stochastic processes}
It is possible to embed the product space setting from section \ref{sec: Product space copulas} into the framework of measurable vector spaces:
The product space $V=\mathbb{R}^I$ for some index set $I$ becomes a measurable vector space, if we equip it with the product $\sigma$-algebra $\otimes_{i\in I}\mathcal{B}(\mathbb R)$. The projections (or evaluation functionals) $\pi_j((v_i)_{i\in I}):=v_j$ for $j\in I$ are measurable (even continuous) by definition, linearly independent and separate the points. Thus, we can take
\begin{equation}\label{W Marginals for product spaces}
    M=\left\lbrace \pi_i:i\in I\right\rbrace.
\end{equation}
Observe that we can do this with every space of functions, in which the evaluations are linearly independent. For instance we can take the space of $p$-integrable functions over a subset $T$ of $\mathbb R^d$, $d\in\mathbb N$
\begin{align}
  \mathcal{L}^p(T) :=& \mathcal{L}^p(T,\mathcal A, \mu;\mathbb R)\notag\\
    =& \left\lbrace f:T\to \mathbb R: f\text{ is measurable and }\|f\|_{\mathcal{L}^p(T)}:=(\int_T f(t)^p \mu(dt))^{\frac 1p}<\infty \right\rbrace
\end{align}
 for some natural number $p$ and a measure space $(T,\mathcal A,\mu)$.
 Observe that in this setting we work in a space of functions, rather than of equivalence classes. The reason is that point evaluations are not well defined in Banach spaces of equivalence classes. This serves also as motivation for 
subsection \ref{sec: pretty Lp case} where we construct copulas under these circumstances.
\end{Example}
\begin{Example}[Path marginals for Banach spaces of functions]\label{Ex: C marginals} Let $V$ be a separable Banach space of real-valued functions on a set $T$ such that the evaluation functionals $\delta_t f:=f(t)$ (or projections in terms of product spaces) are continuous and $\mathcal V =\mathcal B (V)$ is the Borel $\sigma$-algebra with respect to the corresponding norm topology.
In most of these settings, the subset 
\begin{equation}
M=\left\lbrace \delta_t:t\in T\right\rbrace
\end{equation}
of evaluations is linearly independent and, due to continuity, it consists of measurable functionals.
Important examples in this framework are the continuous functions $V=C(T)$ and $V=B_K$, where $B_K$ is a reproducing kernel Banach or Hilbert space in the sense of  \cite{Zhang2009} or \cite{Berlinet2011}. 
Observe that, if we revisit Example \ref{Ex: Marginals for stochastic processes} allowing for general topological vector spaces $V$ instead of Banach spaces, then Example \ref{Ex: Marginals for stochastic processes} would be part of this framework.
\end{Example}
\begin{Example}(Basis marginals)\label{Ex: Basis marginals}
If $V$ is a Banach space that possesses a Schauder basis (cf. Definition \ref{Def: Schauder Basis}) we can take 
 \begin{equation}
    M=\left\lbrace f_n:n\in \mathbb N\right\rbrace.
\end{equation}
where $(f_n)_{n\in \mathbb N}$ is the sequence of coefficient functionals of the Schauder basis. 
Examples of Banach spaces that possess such a basis are $C([0,1])$, $L^p([0,1])$, the sequence spaces $l^p$ and, as a special case, all separable Hilbert spaces with orthonormal bases as Schauder bases. Note that in the latter case we are effectively in the setting of \textit{consistent copulas} from \cite{MR3574701}.
\end{Example}
\begin{Remark}[Marginals for nonlinear subspaces]\label{Ex: random cdf marginals}
Note, that if we are just interested in defining random variables on particular subsets of a vector space $V$, the set $M$ must not necessarily be separating for all elements of $V$. 
One example is the construction of random probability measures, as a certain subset of random variables in the Banach space of signed measures on the real line. 
In this case it suffices to take 
\begin{equation}\label{CDF Marginals}
    M=\left\lbrace F_{\cdot}(t): F_{\mu}(t)= \mu(-\infty,t], t\in\mathbb R\right\rbrace,
\end{equation}
that is, we identify a random probability measure with the corresponding random cumulative distribution function.
\end{Remark}
We will refer to the choice of marginals in Examples \ref{Ex: Marginals for stochastic processes} and \ref{Ex: C marginals} as \textit{path marginals} and the corresponding copulas in this framework as \textit{path copulas}. In contrast, the corresponding constructions in Example \ref{Ex: Basis marginals} will be referred to as \textit{basis marginals} and \textit{basis copulas}. 

Already in finite dimensions, due to different basis specifications, there is not just one choice for $M$. Unfortunately, copulas are not invariant under change of the notion of marginals, as shown by the following example:
\begin{Example}
Suppose $(X_1,X_2)$ and $(Y_1,Y_2)$ are two bivariate real random variables on the same probability space, 
such $C_{X_1,X_2}(u,v)=C_{Y_1,Y_2}(u,v)=uv$ is the independence copula. Assume, moreover, that $X_1\sim N(0,1)$ and $X_2,Y_1,Y_2\sim U(0,1)$. 
By Proposition 3.4.1 in \cite{Cherubini2012} we know that
$$C_{X_1,X_1+X_2}(u,v)=\int_0
^u \frac{d}{d_{x_1}}C_{X_1,X_2}\left(w,F_{X_2}\left(F_{X_1+X_2}^{[-1]}(v)-F_{X_2}^{[-1]}(w)\right)\right)dw.$$
Since the independence copula is simply the product of the one-dimensional uniform distributions and the distribution function of $X_2$ is the identity on $[0,1]$, we have
\begin{align*}
C_{X_1,X_1+X_2}(u,v)= & \int_0
^u F_{X_2}\left(F_{X_1+X_2}^{[-1]}(v)-F_{X_2}^{[-1]}(w)\right)dw \\
= &\int_0
^u \left(F_{X_1+X_2}^{[-1]}(v)-w\right)dw\\
= & u F_{X_1+X_2}^{[-1]}(v)-\frac{u^2}{2}
\end{align*}
and analogously $C_{Y_1+Y_2,Y_2}(u,v)=u F_{Y_1+Y_2}^{[-1]}(v)-\frac{u^2}{2}$. This induces that $C_{Y_1+Y_2,Y_2}$ and $C_{X_1+X_2,X_2}$ coincide if and only if 
$$F_{X_1+X_2}^{[-1]}(v)=F_{Y_1+Y_2}^{[-1]}(v)\quad \forall v\in \mathbb R,$$
which is obviously not the case, in view of the distributional choices on the random variables.
Thus, $(X_1,X_2)$ and $(Y_1,Y_2)$ do not posses the same copula with respect to $\lbrace (1,1),(0,1)\rbrace$-marginals, although they share the same copula with respect to $\lbrace (1,0),(0,1)\rbrace$-marginals.
\end{Example}
If we want to construct a measure on a vector space $V$ by virtue of the second part of Sklar's Theorem the naive procedure reads now as follows:
 \begin{Construction}\label{abstract construction}\hspace{10cm}
\begin{itemize} 
    \item[(i)] Choose some set $M$ which satisfies the conditions of Definition \ref{D: W-Marginals}.
    \item[(ii)] Choose a copula $C$ on $\mathbb{R}^M$ (or a copula process $(U_m)_{m\in M}$) and one-dimensional distributions $(\mu_m)_{m\in M}$ and merge them with Sklar's Theorem to a law $\mu$ (or a process) on $\otimes_{m\in M
}\mathcal B (\mathbb R)$. 
\item[(iii)] (Construction Problem) Check if $\mu$ can be identified with a measure on $\mathcal V$ via the embedding \eqref{Product space embedding}.
\end{itemize}
\end{Construction}
As anticipated in the introduction, the third point will not necessarily carry an affirmative answer. The choice of marginals and dependence structure in (ii) must be based on criteria that guarantee a solution to (iii), which is hereafter referred to as the \textit{construction problem} for copulas in function spaces.

Consider now the following framework (which covers all mentioned examples): $V$ is a topological vector space, $\mathcal V=\mathcal B(V)$ the corresponding Borel $\sigma$-algebra and $M$ a subset of the dual that satisfies the conditions of Definition \ref{D: W-Marginals}. In addition,  assume that each $m\in M$ is continuous, that is, $$M\subset V^*,$$ where $V^*$ denotes the topological dual of $V$, given by
$$V^*:=\left\lbrace v^*:V\to\mathbb{R}: v^*\text{ is linear and continuous}\right\rbrace.$$
Then Construction \ref{abstract construction} induced by Sklar's Theorem effectively culminates in the construction of a cylindrical premeasure on that vector space (see for instance \cite{Buldygin2000} or \cite{Schwartz1973} for a treatment of cylindrical measure theory).
In the case that $V$ is even a separable Banach space and $M\subset V^*$, we however have the following useful criterion for our setting:
\begin{Lemma}\label{L: Measurability is not a problem in separable Banach spaces}
Let $V$ be a separable Banach space. 
Assume that $M\subset V^*$ is a fundamental set with respect to the $\text{weak}^*$-topology, that is, its linear span is dense.
If the probability measure defined in Construction \ref{abstract construction} is the law of a process $X:=(X_m)_{m\in M}$, such that $X$ is almost surely in the range of the embedding \eqref{Product space embedding}, then it is the image of a Borel measurable random variable $\tilde X$ in $V$ under this embedding. 
\end{Lemma}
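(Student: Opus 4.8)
The plan is to identify the cylindrical premeasure $\mu$ obtained from Construction \ref{abstract construction} with a genuine Borel probability measure on $V$ by exhibiting an explicit $V$-valued random variable whose law pushes forward correctly under the embedding \eqref{Product space embedding}. Since $\mu$ is by assumption the law of $X=(X_m)_{m\in M}$, and $X$ lies almost surely in the range of \eqref{Product space embedding}, there is (off a $\P$-null set) a well-defined map $\omega\mapsto \tilde X(\omega)\in V$ characterized by $m(\tilde X(\omega))=X_m(\omega)$ for all $m\in M$; on the null set we set $\tilde X$ to an arbitrary fixed point of $V$. The whole content of the lemma is then that this $\tilde X$ is Borel measurable as a map into $(V,\mathcal B(V))$, after which the identity $\big((m(\cdot))_{m\in M}\big)_*\!\big(\tilde X_*\P\big)=X_*\P=\mu$ is immediate from the defining relations.

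First I would reduce Borel measurability to weak measurability. On a separable Banach space the Borel $\sigma$-algebra $\mathcal B(V)$ coincides with the $\sigma$-algebra $\sigma(V^*)$ generated by the topological dual: this is a standard consequence of separability (the norm-open balls are countable intersections/unions of half-spaces $\{v:\langle v^*,v\rangle<c\}$ once one fixes a countable norming family from $V^*$, e.g. via Hahn--Banach and separability). Hence it suffices to show that $\langle v^*,\tilde X\rangle$ is a real random variable for every $v^*\in V^*$.

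Next I would use the fundamentality hypothesis on $M$ to handle a general $v^*\in V^*$. For $m\in M$ we have $\langle m,\tilde X\rangle=m(\tilde X)=X_m$, which is measurable by construction, and finite linear combinations $\sum_{k} a_k m_k$ of elements of $M$ inherit measurability. Given arbitrary $v^*\in V^*$, fundamentality gives a net (and, by separability of $V$ together with metrizability considerations, one can extract a \emph{sequence}) $(\varphi_n)$ in $\mathrm{span}(M)$ with $\varphi_n\to v^*$ in the weak$^*$ topology, i.e. $\varphi_n(v)\to v^*(v)$ for each $v\in V$. In particular $\varphi_n(\tilde X(\omega))\to v^*(\tilde X(\omega))$ pointwise in $\omega$, so $v^*\circ\tilde X$ is a pointwise limit of measurable functions, hence measurable. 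Combining with the previous paragraph, $\tilde X$ is $\sigma(V^*)$-measurable, hence Borel measurable. The final push-forward identity then follows by checking it on the generating cylinder sets $\{v:(m_1(v),\dots,m_k(v))\in B\}$, where both sides equal $\P\big((X_{m_1},\dots,X_{m_k})\in B\big)$.

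The main obstacle is the sequential approximation step: weak$^*$-density of $\mathrm{span}(M)$ only gives \emph{nets}, and a pointwise-net limit of measurable functions need not be measurable. The resolution must exploit separability of $V$ --- which makes the weak$^*$ topology on norm-bounded subsets of $V^*$ metrizable --- so that a genuine bounded sequence from $\mathrm{span}(M)$ converging weak$^*$ to $v^*$ can be produced (e.g. first reduce to approximating $v^*$ by elements of uniformly bounded norm using a Banach--Steinhaus/gliding-hump argument on a countable dense set of $V$, then metrize). I would make sure the bound on the approximating functionals is uniform so that no integrability issues arise, although for mere measurability pointwise convergence already suffices. A secondary (routine) point to be careful about is the null set on which $\tilde X$ is undefined: since $(\Omega,\mathcal F,\P)$ is complete this causes no measurability problems, but it should be mentioned explicitly.
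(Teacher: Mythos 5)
Your proposal follows essentially the same route as the paper: define $\tilde X$ pointwise on a set of full measure via the range condition, establish weak measurability by approximating an arbitrary $v^*\in V^*$ by elements of $\mathrm{span}(M)$ using weak$^*$-density, and conclude Borel measurability from separability (Pettis' theorem, equivalently $\mathcal B(V)=\sigma(V^*)$). The ``main obstacle'' you flag --- that weak$^*$-density only yields nets, and that your proposed Banach--Steinhaus/metrizability extraction of a bounded weak$^*$-convergent sequence would really require $\mathrm{span}(M)$ to be norming rather than merely total --- is exactly the step the paper passes over by simply asserting that such a sequence in $\mathrm{lin}(M)$ exists, so your attempt is no less complete than the paper's own argument and has the merit of naming the delicate point explicitly.
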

\begin{proof}
If $(X_m)_{m\in M}$ is almost surely in the range of the embedding, there is an $\tilde{\Omega}\subseteq \Omega$ with full measure and a random variable $\tilde X$ such that $m(\tilde X(\omega))=X_m(\omega)$ for all $\omega\in \tilde{\Omega}$. 
Since $M$ is a fundamental set, we have that for all $v^*\in V^*$ there is a sequence $(\sum_{i=1}^{N_n} \lambda_i^n m_i^n)_{n\in \mathbb N}$ in $lin(M)$ such that $\sum_{i=1}^{N_n} \lambda_i^n m_i^n\to v^*$ with respect to the $\text{weak}^*$-topology. Thus 
\begin{equation*}
    v^*(\tilde X)=\lim_{n\to\infty} \sum_{i=1}^{N_n} \lambda_i^n m_i^n(\tilde X) \qquad \text{a.s.}
\end{equation*}
is measurable, since linear combinations and limits of measurable functions are measurable. We conclude that $\tilde X$ is a weakly measurable random variable on a separable Banach space and hence, by the Pettis theorem \cite[Theorem 1.1]{MR1501970} strongly measurable, that is, measurable with respect to the Borel $\sigma$-algebra. 

\end{proof}
\begin{Remark}
Due to the existence of Hamel bases on $V^*$ and the Hahn-Banach Theorem (cf. Corollary 5.80 in \cite{MR2378491}), the existence of a set $M$ that satisfy the conditions in Definition \ref{D: W-Marginals}  
is always guaranteed in locally convex Hausdorff spaces. 
\end{Remark}
We will concentrate in the next sections 
on special cases of path- and basis constructions, which are adequate to solve the construction problem (for instance by virtue of Lemma \ref{L: Measurability is not a problem in separable Banach spaces}), which is why
they are foremost of practical importance.

\subsection{Solutions to the Construction Problem}\label{sec: Solutions to the construction problem}
\subsubsection{Path Copulas for $p$-Integrable Stochastic Processes}\label{sec: pretty Lp case}
We describe in this section how the copula construction induced by Sklar's Theorem \ref{T: Sklar in infinite dimensions} works for the function space $\mathcal{L}^p(T, \mathcal B(T), \mu;\mathbb R)=:\mathcal{L}^p(T)$ for $p\in\mathbb N$, a measurable set $T\subset \mathbb{R}^d$ with $d\in \mathbb N$ and a $\sigma$-finite measure $\mu$.
As mentioned in Example \ref{W Marginals for product spaces}, we take $M=\lbrace \delta_t:t\in T\rbrace$-marginals, that is, we identify a function $f\in\mathcal L^p(T)$ by all its function values $(f(t))_{t\in T}$.
Moreover, we denote by $[f]$ the corresponding equivalence class of almost everywhere coinciding functions with $f$, which forms an element in the Banach space of equivalence classes $L^p(T)$.

For a stochastic process $X=(X_t)_{t\in T}$ we say that it is measurable, if the mapping $(t,\omega)\mapsto X_t(\omega)$ is $\mathcal B (T)\otimes \mathcal A/ \mathcal B (\mathbb R)$-measurable.
\begin{Lemma}\label{L:Processes that are p integrable}
Let $X=(X_t)_{t\in T}$ be a measurable stochastic process.
\begin{itemize}
    \item[(a)] Assume $X$ has values in $\mathcal{L}^p(T)$ almost surely. Then $X$ is a Borel measurable random variable in $\mathcal{L}^p(T)$ (with respect to the pseudometric induced by $\|\cdot\|_{\mathcal L^p(T)}$).
\item[(b)] Let $X$ be measurable. Then
$[X]\in L^p(\Omega;L^p(T))$ if and only if 
\begin{equation*}
\int_T \mathbb{E}\left[|X_t|^p\right] \mu(dt)<\infty.
\end{equation*}
\end{itemize}
\end{Lemma}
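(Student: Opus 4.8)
The plan is to prove (a) first and then obtain (b) from it via Tonelli's theorem. For (a), the idea is to transport the question to the separable Banach space $L^p(T)$ of equivalence classes, where the Pettis measurability theorem applies, and then transfer the conclusion back to $\mathcal{L}^p(T)$. Since $T\subseteq\mathbb{R}^d$ carries the countably generated Borel $\sigma$-algebra and $\mu$ is $\sigma$-finite, $L^p(T)$ is separable (recall $p\geq 1$). Let $q\colon\mathcal{L}^p(T)\to L^p(T)$, $f\mapsto[f]$, be the quotient map. The pseudometric $d(f,g)=\|f-g\|_{\mathcal{L}^p(T)}$ equals $\|[f]-[g]\|_{L^p(T)}$, so every $d$-ball is a $q$-preimage of a norm ball in $L^p(T)$; hence the pseudometric topology on $\mathcal{L}^p(T)$ is the $q$-pullback of the norm topology, and, $L^p(T)$ being second countable, $\mathcal{B}(\mathcal{L}^p(T))=q^{-1}(\mathcal{B}(L^p(T)))$. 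Consequently $X\colon\Omega\to\mathcal{L}^p(T)$ is Borel measurable if and only if $[X]=q\circ X\colon\Omega\to L^p(T)$ is. (On the measurable null set where $\|X(\omega)\|_{\mathcal{L}^p(T)}=\infty$ — measurable by the Tonelli computation below — we redefine $X\equiv 0$; joint measurability and all almost sure statements are preserved, and completeness of $\mathbb{P}$ makes this harmless, so we may assume $X(\omega)\in\mathcal{L}^p(T)$ for every $\omega$.)

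It then remains to show $[X]$ is strongly measurable. Separable range being automatic, the Pettis theorem (already used in Lemma \ref{L: Measurability is not a problem in separable Banach spaces}) reduces this to weak measurability: for each $\varphi$ in the dual $(L^p(T))^{*}\cong L^{q}(T)$, $1/p+1/q=1$, the map $\omega\mapsto\langle[X(\omega)],\varphi\rangle=\int_T X_t(\omega)\varphi(t)\,\mu(\dm t)$ should be $\mathcal{F}$-measurable (fixing a representative of $\varphi$). The function $(t,\omega)\mapsto X_t(\omega)\varphi(t)$ is $\mathcal{B}(T)\otimes\mathcal{F}$-measurable by the joint measurability of $X$; truncating, $h_n(t,\omega):=X_t(\omega)\varphi(t)\,\1_{\{|X_t(\omega)\varphi(t)|\leq n\}}\,\1_{T_n}(t)$ with $T_n\uparrow T$, $\mu(T_n)<\infty$, produces bounded jointly measurable functions supported on a set of finite $\mu\otimes\mathbb{P}$-measure, so Fubini's theorem makes $\omega\mapsto\int_T h_n(t,\omega)\,\mu(\dm t)$ measurable. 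For each $\omega$ with $X(\omega)\in\mathcal{L}^p(T)$, Hölder's inequality gives $t\mapsto X_t(\omega)\varphi(t)\in L^1(T)$, so dominated convergence yields $\int_T h_n(t,\omega)\,\mu(\dm t)\to\int_T X_t(\omega)\varphi(t)\,\mu(\dm t)$; a pointwise limit of measurable functions being measurable, weak measurability follows, whence $[X]$ is strongly (hence Borel) measurable, and by the previous paragraph so is $X$.

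For (b), apply Tonelli's theorem to the nonnegative jointly measurable function $(t,\omega)\mapsto|X_t(\omega)|^p$:
\begin{equation*}
\int_T\mathbb{E}\big[|X_t|^p\big]\,\mu(\dm t)\;=\;\mathbb{E}\Big[\int_T|X_t|^p\,\mu(\dm t)\Big]\;=\;\mathbb{E}\big[\|X\|_{\mathcal{L}^p(T)}^p\big]\;\in\;[0,\infty].
\end{equation*}
If the left-hand side is finite, then $\|X(\omega)\|_{\mathcal{L}^p(T)}<\infty$ for $\mathbb{P}$-a.e.\ $\omega$, so by (a) $[X]$ is a strongly measurable $L^p(T)$-valued map with $\mathbb{E}\big[\|[X]\|_{L^p(T)}^p\big]=\mathbb{E}\big[\|X\|_{\mathcal{L}^p(T)}^p\big]<\infty$, i.e.\ $[X]\in L^p(\Omega;L^p(T))$. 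Conversely, if $[X]\in L^p(\Omega;L^p(T))$ then $\|[X(\omega)]\|_{L^p(T)}=\|X(\omega)\|_{\mathcal{L}^p(T)}$ for $\mathbb{P}$-a.e.\ $\omega$, and the displayed identity forces $\int_T\mathbb{E}[|X_t|^p]\,\mu(\dm t)=\mathbb{E}\big[\|[X]\|_{L^p(T)}^p\big]<\infty$.

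The only genuinely delicate point is the weak-measurability step in (a): because $X$ need not be integrable on $T\times\Omega$, Fubini cannot be applied directly to $(t,\omega)\mapsto X_t(\omega)\varphi(t)$, and one must route through the truncation-plus-dominated-convergence argument above. Everything else — identifying the two Borel $\sigma$-algebras, discarding the exceptional null set, and the single Tonelli identity — is routine bookkeeping.
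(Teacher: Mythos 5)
Your proof is correct and takes essentially the same route as the paper's: identify $\mathcal{B}(\mathcal{L}^p(T))$ with the pullback of $\mathcal{B}(L^p(T))$ under the quotient map, invoke the Pettis theorem for (a), and use Fubini/Tonelli for (b). The only difference is that you spell out, via truncation and dominated convergence, why the dual pairings $\omega\mapsto\int_T X_t(\omega)\varphi(t)\,\mu(\dm t)$ are measurable, a step the paper asserts directly from joint measurability of the process.
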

\begin{proof}
We will verify that $[X]$ is a Borel measurable random variable on the Banach space $L^p(T)$. In that case we have that since $\mathcal{O}\subset \mathcal B(\mathcal{L}^p(T))$ is open if and only if $[\mathcal{O}]\subset \mathcal B(L^p(T))$ is open, thus $X$ is $\mathcal A/\mathcal{B}(\mathcal{L}^p(T))$-measurable if and only if $[X]$ is $\mathcal A/\mathcal{B}(L^p(T))$-measurable.
By Pettis theorem \cite[Theorem 1.1]{MR1501970} we have that $[X]$ is measurable, if and only if $\int_T X(t) y(t)dt$ is measurable for all $y\in L^q(T)$ with $q= \frac{p}{p-1}$ if $p\geq 2$ and for all $y\in L^{\infty}(T)$ if $p= 1$. Due to measurability of the process $X$, these integrals are indeed measurable. This shows (a).

To show (b), observe that by Fubini's theorem we have
$$\mathbb E \left[\int_T |X_t|^p \mu(dt)\right]=\int_T \mathbb{E}\left[|X_t|^p\right] \mu(dt)$$
whenever one of the terms in this equation is finite. Using (a), this shows the assertion.
\end{proof}

Lemma \ref{L:Processes that are p integrable} yields the following simple construction of random variables $X$ such that $[X]\in L^p(\Omega;L^p(T))$:
\begin{Construction}\label{Construction for Lp}\hspace{10cm}
\begin{itemize}
    \item[(i)] Specify a measurable copula process $U=(U_t)_{t\in T}$.
    \item[(ii)] define marginals $(F_t)_{t\in T}$, with corresponding $p$th moments $(m_t^p)_{t\in T}$, such that $(t,x)\mapsto F_t(x)$ is jointly measurable and
    \begin{equation}\label{pretty lp moment condition}
        \int_Tm_t^p\mu(dt)<\infty.
    \end{equation} 
    \item[(iii)] construct the new process $X$ with underlying copula process $U$ and marginals $(F_t)_{t\in T}$ via Sklar's Theorem \ref{T: Sklar in infinite dimensions}, that is 
    $$X_t=F_t^{[-1]}(U_t)\quad \forall t\in T.$$
    This process has values in $\mathcal{L}^p(T)$ and by Lemma \eqref{L:Processes that are p integrable}, $[X]$ is therefore an element in $L^p(\Omega;L^p(T))$.
\end{itemize}
\end{Construction}
Notice that the interpretability of the underlying path copula of $X$ is complicated, if transfer to the equivalence class $[X]$. Indeed path copulas 
specify dependence between point evaluations of the random function, which are not well defined anymore for equivalence classes. 
If one really wants to specify dependence between equivalence classes, one should approach this by considering the notion of basis marginals, as described in Subsection \ref{Sec:Schauder Basis Case}.

From a measure theoretical point of view, Banach spaces in which evaluation functionals are well defined and continuous are favourable and we will discuss this in the sense of spaces of continuous functions in the next subsection.

\subsubsection{Path Copulas for Continuous Processes}
For a given interval $T\subset \R^d, d\in \mathbb N$, that is $T=I_1\times...\times I_d$ for some (eventually unbounded) one-dimensional intervals $I_1,...,I_d$, we want to establish a `Sklar-like' theorem in the space of real continuous functions $C(T):=C(T;\mathbb R)$. If $T$ is compact, we equip this with the norm $\|f\|_{\infty}:=\sup_{t\in T}|f(t)|$, making $C(T)$ a separable Banach space.

Recall that a process $X=(X_t)_{t\in T}$ with marginals $(F_t)_{t\in [0,1]}$ is continuous in distribution if, for all $t\in T$,
\begin{equation}\label{General continuity in distribution}
   \lim_{s\to t} F_s(x_0)=  F_t(x_0) \quad \text{for all continuity points }x_0\text{ of }F_t.
\end{equation}
If we assume that all the marginals $F_t,t\in T$ are continuous (in $x$), \eqref{General continuity in distribution} simplifies to the condition that 
\begin{equation}\label{Continuous marginal continuity in distribution}
(t,x)\mapsto F_t(x)\quad \text{is continuous in both variables separately}
\end{equation}
In the latter case we have even joint continuity:
\begin{Lemma}\label{L: Partially Monotonicity implies joint continuity}
Assume that the marginals $F_t,t\in T$ of an almost surely continuous process $X=(X_t)_{t\in T}$ are continuous. Then 
\begin{equation}\label{Continuous marginal continuity in distribution}
(t,x)\mapsto F_t(x)\quad \text{is jointly continuous}.
\end{equation}
If the marginals are strictly increasing between the points $F^{[-1]}_t(0+)$ and $F^{[-1]}_t(1)$ in $x$ we have that
\begin{equation}\label{Continuous quantile marginal continuity in distribution}
(t,x)\mapsto F_t^{[-1]}(x)\quad \text{is jointly continuous}.
\end{equation}
\end{Lemma}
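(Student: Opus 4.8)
The plan is to upgrade the \emph{separate} continuity of $(t,x)\mapsto F_t(x)$ to \emph{joint} continuity by combining monotonicity in $x$ with a Pólya-type uniform-convergence argument; since $T\subseteq\R^d$ and $\R$ are metric it suffices to argue with sequences throughout. As a first step I would record that almost sure continuity of $X$ forces continuity in distribution in the sense of \eqref{General continuity in distribution}: if $t_n\to t$ in $T$ then $X_{t_n}\to X_t$ almost surely, hence in distribution, so $F_{t_n}(x_0)\to F_t(x_0)$ at every continuity point $x_0$ of $F_t$; since $F_t$ is continuous this gives $F_{t_n}\to F_t$ pointwise on all of $\R$, with each $F_{t_n}$ and $F_t$ a genuine non-decreasing distribution function.

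Next I would turn pointwise convergence into uniform convergence on $\R$. Fix $\varepsilon>0$ and pick points $a_1<\dots<a_k$ with $F_t(a_1)<\varepsilon$, $F_t(a_k)>1-\varepsilon$ and $F_t(a_j)-F_t(a_{j-1})<\varepsilon$ for $2\le j\le k$, which is possible because $F_t$ is continuous with limits $0$ and $1$ at $\mp\infty$. For $x$ in an interval cut out by consecutive $a_j$'s, monotonicity of $F_{t_n}$ and $F_t$ sandwiches $F_{t_n}(x)-F_t(x)$ between $F_{t_n}(a_{j-1})-F_t(a_j)$ and $F_{t_n}(a_j)-F_t(a_{j-1})$, whence $\sup_{x\in\R}|F_{t_n}(x)-F_t(x)|\le \max_{1\le j\le k}|F_{t_n}(a_j)-F_t(a_j)|+\varepsilon$; the maximum runs over finitely many points and tends to $0$, so letting $\varepsilon\downarrow 0$ gives $\sup_x|F_{t_n}(x)-F_t(x)|\to 0$. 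Joint continuity \eqref{Continuous marginal continuity in distribution} then follows from the estimate
\[
|F_{t_n}(x_n)-F_t(x)|\le \sup_{y\in\R}|F_{t_n}(y)-F_t(y)|+|F_t(x_n)-F_t(x)|
\]
for any $(t_n,x_n)\to(t,x)$, the first term vanishing by the uniform convergence and the second by continuity of $F_t$.

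For \eqref{Continuous quantile marginal continuity in distribution} I would use that, under the extra hypothesis, $F_t$ restricts to a strictly increasing continuous bijection of $(F_t^{[-1]}(0+),F_t^{[-1]}(1))$ onto $(0,1)$, so $F_t^{[-1]}$ is finite, continuous and strictly increasing on $(0,1)$ with $F_t(F_t^{[-1]}(u))=u$ there. First, pointwise convergence $F_{t_n}^{[-1]}(u)\to F_t^{[-1]}(u)$ for $u\in(0,1)$: writing $y=F_t^{[-1]}(u)$ and using $F_t(y-\delta)<u<F_t(y+\delta)$ for small $\delta>0$, the pointwise convergence of the $F_{t_n}$ gives $F_{t_n}(y-\delta)<u<F_{t_n}(y+\delta)$ for large $n$, hence $y-\delta\le F_{t_n}^{[-1]}(u)\le y+\delta$. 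Second, the Pólya argument of the previous paragraph applied to the non-decreasing functions $F_{t_n}^{[-1]}$ and $F_t^{[-1]}$ on a compact subinterval $[a,b]\subset(0,1)$ — now using uniform continuity of $F_t^{[-1]}$ on $[a,b]$ to select the partition — gives $\sup_{u\in[a,b]}|F_{t_n}^{[-1]}(u)-F_t^{[-1]}(u)|\to 0$. Finally, for $(t_n,u_n)\to(t,u)$ with $u\in(0,1)$, choosing $[a,b]\subset(0,1)$ with $u$ in its interior so that $u_n\in[a,b]$ eventually, the triangle inequality as above (with $F_t^{[-1]}$ in place of $F_t$) yields \eqref{Continuous quantile marginal continuity in distribution}.

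The main obstacle is precisely the passage from separate or pointwise continuity to joint or uniform continuity: separate continuity alone is never enough, and what rescues the argument is monotonicity in the second variable together with continuity of the limit $F_t$ (respectively $F_t^{[-1]}$), which is exactly the mechanism behind Pólya's theorem. A secondary, purely technical nuisance is keeping track of the domain of the quantile functions near the endpoints $0$ and $1$, where $F_t^{[-1]}$ may take the values $\mp\infty$; restricting to compact subintervals of $(0,1)$ sidesteps this.
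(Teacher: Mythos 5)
Your proof is correct, and at bottom it runs on the same mechanism the paper invokes: monotonicity in the second variable plus continuity of the limit function upgrades separate continuity (in $t$, coming from almost sure continuity of $X$, hence $X_{t_n}\to X_t$ in distribution; in $x$, coming from continuity of $F_t$, resp. continuity and strict monotonicity for $F_t^{[-1]}$) to joint continuity. The difference is packaging: the paper disposes of the two ingredients by citation — Lemma 21.2 of van der Vaart for the pointwise convergence of $t\mapsto F_t^{[-1]}(u)$ on $(0,1)$, and the argument of Kruse's Proposition 1 (separately continuous and monotone in one variable implies jointly continuous) for the upgrade — whereas you reprove both from scratch: the quantile convergence via the sandwich $F_t(y-\delta)<u<F_t(y+\delta)$, and the upgrade via a P\'olya-type step ($\sup_x|F_{t_n}(x)-F_t(x)|\to 0$, resp. uniform convergence of $F_{t_n}^{[-1]}$ on compact subintervals of $(0,1)$) followed by the triangle inequality, instead of Kruse's direct squeeze. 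Your route buys a self-contained proof and a slightly stronger intermediate statement (locally uniform convergence of the marginal distribution/quantile functions along $t_n\to t$), at the cost of length; the paper's buys brevity. Your restriction of \eqref{Continuous quantile marginal continuity in distribution} to $T\times(0,1)$, where the quantiles are finite, is the right reading of the statement and matches how it is used later in the paper (e.g. $G_t^{[-1]}\in C^{l,\delta}(T\times(0,1);\mathbb R)$).
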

\begin{proof}
Due to Lemma 21.2 from \cite{Vaart1998} we have that $t\mapsto F_t^{[-1]}$ is pointwise continuous.
The proof follows then analogously to the arguments of the proof of Proposition 1 in \cite{Kruse1969}.
\end{proof}
 Since processes with continuous sample paths are continuous in distribution, \eqref{General continuity in distribution} (resp. \eqref{Continuous marginal continuity in distribution}) forms a necessary condition on the marginals. 
 \begin{Theorem}\label{T: Sklar for continuous functions}
 Let $X=(X_t)_{t\in T}$ be a stochastic processes 
 with sample paths that belong almost surely to $C(T)$ and such that it has continuous marginals $F_t$ for all $t\in T$. Then  $U=(U_t)_{t\in T}$ defined by
 \begin{equation}\label{Continuous function copula process}
     t\mapsto U_t:=F_t(X_t)
 \end{equation}  
 is a copula process for $X$ and almost surely continuous on $T$.
Vice versa, if $U$ is a copula process that is almost surely continuous on $T$ and $F_t,t\in T$ are strictly increasing marginals between the points $F^{[-1]}_t(0+):=\lim_{x\downarrow 0}F^{[-1]}_t(x)$ and $F^{[-1]}_t(1)$, which are continuous in distribution, then  $Y=(Y_t)_{t\in T}$ defined by
\begin{equation}\label{Continuous Sklar construction}
   Y_t= F_t^{[-1]}(U_t)
\end{equation}
  is a random variable, which is almost surely in $C(T)$ with marginals $F_t$ and underlying copula process $U$. Moreover, if $T$ is compact, $Y$ is measurable with respect to the Borel $\sigma$-algebra on $C(T)$. 
 \end{Theorem}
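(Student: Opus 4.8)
The plan is to handle the two directions separately, in each case reducing the sample-path claim to the joint-continuity statements of Lemma~\ref{L: Partially Monotonicity implies joint continuity} and using the appropriate form of Sklar's Theorem~\ref{T: Sklar in infinite dimensions} for the distributional bookkeeping; the topological-measurability claim is then dealt with at the end via Lemma~\ref{L: Measurability is not a problem in separable Banach spaces}.

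\emph{Forward direction.} Since each $F_t$ is continuous, the left-limit $F_t(X_t-)$ coincides with $F_t(X_t)$, so $U=(F_t(X_t))_{t\in T}$ is exactly the distributional transform process constructed in the proof of Theorem~\ref{T: Sklar in infinite dimensions}. Hence $U$ has uniform marginals on $[0,1]$ and its law $C$ satisfies the Sklar property~\eqref{Sklar Property} relative to the law of $X$, i.e. $U$ is a copula process for $X$. For the regularity I would invoke Lemma~\ref{L: Partially Monotonicity implies joint continuity}: the marginals are continuous and $X$ is almost surely continuous, so $(t,x)\mapsto F_t(x)$ is jointly continuous; therefore on the full-measure event $\{X_\cdot(\omega)\in C(T)\}$ the curve $t\mapsto(t,X_t(\omega))$ is continuous into $T\times\mathbb R$ and $t\mapsto U_t(\omega)=F_t(X_t(\omega))$ is continuous as its composition with $(t,x)\mapsto F_t(x)$.

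\emph{Converse.} Put $Y_t=F_t^{[-1]}(U_t)$. Since $U_t\sim U(0,1)$, the quantile transform gives $Y_t\sim F_t$, and the law of $Y$ equals $((F_t^{[-1]})_{t\in T})_*C$ with $C$ the law of $U$; by construction~\eqref{Explicit form of seond part Sklar} and the Remark following Theorem~\ref{T: Sklar in infinite dimensions} this measure has marginals $F_t$ and $U$ as an underlying copula process. For the path regularity I would use the quantile part of Lemma~\ref{L: Partially Monotonicity implies joint continuity}: continuity in distribution of $(F_t)_{t\in T}$ yields pointwise continuity of $t\mapsto F_t^{[-1]}$ (by Lemma~21.2 in \cite{Vaart1998}), and together with the strict monotonicity of $F_t$ between $F_t^{[-1]}(0+)$ and $F_t^{[-1]}(1)$ the monotone-function argument used there (following \cite{Kruse1969}) upgrades this to joint continuity of $(t,u)\mapsto F_t^{[-1]}(u)$; composing this deterministic map with the almost surely continuous path $t\mapsto U_t(\omega)$ shows that $Y$ has sample paths in $C(T)$ almost surely. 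Finally, when $T$ is compact, $C(T)$ is a separable Banach space, the point evaluations $\{\delta_t:t\in T\}$ span a weak$^*$-dense subspace of $C(T)^*=M(T)$ (finitely supported signed measures being weak$^*$-dense), and each $Y_t=F_t^{[-1]}(U_t)$ is measurable; Lemma~\ref{L: Measurability is not a problem in separable Banach spaces} then delivers a Borel measurable version of $Y$ in $C(T)$ (alternatively, the Borel $\sigma$-algebra of the separable space $C(T)$ is already generated by the maps $\delta_t$).

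I expect the only genuinely delicate point to be the converse's path-regularity step. One must apply Lemma~\ref{L: Partially Monotonicity implies joint continuity} in the form that needs only continuity in distribution and strict monotonicity of the marginals, since invoking the existence of a continuous process with the prescribed marginals at this stage would be circular (that process is precisely the $Y$ we are constructing); and one must be careful to perform the composition of the jointly continuous quantile map with the random continuous copula path on a correctly chosen full-measure event. Everything else is bookkeeping built on Sklar's Theorem~\ref{T: Sklar in infinite dimensions} together with its Remark and Lemma~\ref{L: Measurability is not a problem in separable Banach spaces}.
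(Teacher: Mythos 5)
Your proof is correct and follows essentially the same route as the paper: the forward direction uses the distributional transform together with the joint continuity of $(t,x)\mapsto F_t(x)$ from Lemma~\ref{L: Partially Monotonicity implies joint continuity}, the converse composes the jointly continuous quantile map $(t,u)\mapsto F_t^{[-1]}(u)$ with the continuous copula path, and Borel measurability for compact $T$ is obtained from Lemma~\ref{L: Measurability is not a problem in separable Banach spaces}. Your explicit observation that in the converse one must extract from Lemma~\ref{L: Partially Monotonicity implies joint continuity} only the part resting on continuity in distribution and strict monotonicity (to avoid circularly presupposing a continuous process with marginals $F_t$) is a careful reading of a point the paper's own proof passes over tacitly, but it is the same argument.
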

 \begin{proof}
 Observe, that since we are in the case of continuous marginals, the process $(U_t)_{t\in T}$ defined by $U_t=F_t(X_t)$ for all $t\in T$ is a copula process underlying $X$. Its continuity follows by Lemma \ref{L: Partially Monotonicity implies joint continuity} and the continuity of $s\mapsto X_s$. 

 To show the second part, observe that $(t,x)\mapsto F^{[-1]}_t(x)$ is continuous in $x$, since the marginals $F_t,t\in T$ are strictly increasing between $F^{[-1]}_t(0+)$ and $F^{[-1]}_t(1)$. Hence $Y$ is a random variable with values in $C(T)$ almost surely by Lemma \ref{L: Partially Monotonicity implies joint continuity} and the continuity of $s\mapsto U_s$. Its Borel measurability for compact $T$ follows from Lemma \ref{L: Measurability is not a problem in separable Banach spaces}.
 \end{proof}
 \begin{Remark}
 In principle, a more abstract set $T$ could be taken, as the precise structure of $\mathbb R^d$ is not used, but for convenience we stay in the euclidean setting throughout this paper.
 \end{Remark}
\begin{Remark}
In the framework of stochastic processes, the initial value $X_0$ is often chosen to be deterministic. Therefore it has neither a continuous nor strictly increasing distribution in the initial value. Possibly, for some processes, we still manage to define a continuous underlying copula (if the copula process has a limit from above in $0$ which is uniformly distributed), but since this might be hard to check in general, it is reasonable to start the process a little bit later than in the origin. 
 \end{Remark}
\begin{Example}
For some $t_0>0$ let $X_{t}=B_{t}$ for $t\in [t_0,\infty)$ be a standard Brownian motion with sample paths in $C(\mathbb R_+)$.
$$U_t= F_{t}(B(t))= \frac{1}{\sqrt{2\pi t}}\int_{-\infty}^{B(t)} e^{-\frac{x^2}{2t}}dx.
$$
We note that the copula of a Brownian motion was investigated for instance in \cite{Sempi2016} in the framework of Markov copulas.
\end{Example}

\subsubsection{Path Regularity and Copulas}
Let again $T=I_1\times...\times I_d$ be an interval in $\mathbb R^d$ for some $d\in \mathbb N$.
Recall that for a constant $\gamma> 0$ a function $f:T\to \mathbb{R}$ is called locally $\gamma$-H{\"o}lder continuous, if for each $t\in T$ there is a neighbourhood $N(t)$ of $t$ in $T$ and a constant $K_t>0$, such that for all $s,r\in N(t)$ we have 
$$|f(s)-f(r)|\leq K_t |s-r|^{\gamma}.$$
For a nonnegative integer $k$, $\gamma \in (0,1]$ and $m\in\mathbb N$, we introduce the Hölder spaces $ C^{k,\gamma}(T;\mathbb{R}^m)$ to be the space of functions $f:T\to \mathbb{R}^m$ which are continuously differentiable up to order $k$ and the $k$th derivative is locally $\gamma$-Hölder continuous. 

Recall the following fact about locally Hölder continuous functions:
\begin{Lemma}\label{L: Hölder compositions}
Let $I_1,...,I_m$ be intervals and $f=(f_1,...,f_m)\in  C^{k,\gamma}(T;\mathbb{R}^m)$ and $g\in C^{l,\eta}(I_1\times...\times I_m;\mathbb{R})$ such that $f(T)\subset I_1\times...\times I_m$. Then 
$$g\circ f\in\begin{cases} C^{0,\gamma\eta}(T;\mathbb{R}) & k=l=0\\
C^{k,\gamma}(T;\mathbb{R}) & k>l\\
C^{l,\eta}(T;\mathbb{R}) & l>k\\
C^{k,\min(\gamma,\eta)}(T;\mathbb{R}) & l=k\geq 1.
\end{cases}$$
\end{Lemma}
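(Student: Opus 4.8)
The plan is to treat the four cases separately, but in each case the underlying mechanism is the same: localize around a point, then estimate the increment $g(f(s))-g(f(r))$ by inserting the right intermediate terms and invoking the (local) Hölder/differentiability hypotheses on $g$ and $f$ on a suitably small neighbourhood. Since both hypotheses are purely local — for each $t\in T$ one gets a neighbourhood $N(t)$ and constants — and composition of continuous maps is continuous, it suffices to fix $t\in T$, pass to a neighbourhood $N$ of $t$ on which $f$ satisfies its Hölder/$C^{k}$ bounds and on which $f(N)$ lies in a neighbourhood where $g$ satisfies its $C^{l,\eta}$ bounds (this is possible by continuity of $f$), and then work on $N$. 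I would state this localization reduction once at the start so that the rest of the argument can be carried out with global-looking constants.

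For the case $k=l=0$ the estimate is the classical one: for $s,r\in N$,
\begin{equation*}
|g(f(s))-g(f(r))|\le K^{g}\,|f(s)-f(r)|^{\eta}\le K^{g}\,(K^{f})^{\eta}\,|s-r|^{\gamma\eta},
\end{equation*}
which gives $g\circ f\in C^{0,\gamma\eta}$. For the cases with $\min(k,l)=0$ but $\max(k,l)\ge 1$, say $k>l=0$: here $g$ is merely $\eta$-Hölder while $f\in C^{k,\gamma}$; one cannot gain differentiability through a non-differentiable $g$, so the claim is that $g\circ f$ is $\gamma$-Hölder (note $\gamma\le 1$), and indeed $g$ being $\eta$-Hölder on a bounded set is in particular Lipschitz-dominated only if $\eta\ge 1$, so one must be a bit careful: since $\gamma\le 1$ and $f$ is in particular $1$-Lipschitz locally (being $C^{1}$, as $k\ge1$), we get $|f(s)-f(r)|\le L|s-r|$, hence $|g(f(s))-g(f(r))|\le K^{g}|f(s)-f(r)|^{\eta}\le K^{g}L^{\eta}|s-r|^{\eta}$; combining with the fact that on a bounded neighbourhood a $\min(1,\eta)$-Hölder estimate upgrades to any smaller exponent, one lands at $C^{0,\gamma}$ — I would double-check the exponent bookkeeping here, because this is exactly the place where the stated exponent $\gamma$ (rather than something like $\min(\gamma,\eta)$ or $\eta$) must be justified, presumably using $\gamma\le 1$. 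Symmetrically, $l>k=0$ gives $C^{0,\eta}$ by the same reasoning with the roles of the two maps interchanged.

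For $l=k\ge 1$ the argument is the standard chain-rule bootstrap: by induction on $k$, the derivatives $D^{j}(g\circ f)$ for $j\le k$ are, by Faà di Bruno's formula, finite sums of products of derivatives $D^{a}g$ evaluated at $f$ (orders $a\le k$) times derivatives $D^{b}f$ (orders $b\le k$); all such factors are continuous, so $g\circ f\in C^{k}$, and the top-order term $D^{k}(g\circ f)$ is a sum of products in which each factor is either locally Lipschitz (lower-order derivatives, since they are $C^{1}$) or locally Hölder of exponent $\gamma$ (the $D^{k}f$ factor) or of exponent $\eta$ (the $D^{k}g\circ f$ factor, composed with the locally Lipschitz $f$, hence still $\eta$-Hölder in $s$); a product of functions that are respectively Lipschitz, $\gamma$-Hölder and $\eta$-Hölder on a bounded set is $\min(\gamma,\eta)$-Hölder, giving $C^{k,\min(\gamma,\eta)}$. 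The case $k>l$ (so $l\le k-1$) is the same computation but now only derivatives of $g$ up to order $l\le k-1$ enter; after differentiating $g\circ f$ exactly $k$ times one still only ever sees $D^{l}g$ at worst, which is $C^{0,\eta}$, but it is composed with $D^{\le k-l}f$-type factors that are themselves $C^{0,\gamma}$ with $\gamma\le 1$, and the remaining structure shows the $k$th "derivative" — interpreted correctly, since $g\circ f$ need only be $C^{k}$ — is $\gamma$-Hölder; more cleanly, one observes $g\in C^{l,\eta}\subset C^{l,1}=C^{l+1}\supset\dots$ is false in general, so instead the right statement is that composing a $C^{l,\eta}$ function ($l\ge1$) with a $C^{k,\gamma}$ function with $k>l$ yields $C^{k,\gamma}$ because the bottleneck is $f$'s regularity, not $g$'s, as $g$ is already smoother than what survives the chain rule at order $k$. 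I expect the main obstacle to be precisely this exponent bookkeeping in the mixed cases ($k\neq l$): making fully rigorous why the output exponent is the one claimed rather than a min or a product requires careful use of $\gamma,\eta\le 1$ and of the fact that on a bounded neighbourhood a function that is Hölder of one exponent is Hölder of every smaller exponent. I would isolate this as a short sublemma on Hölder exponents on bounded sets and then the four cases fall out by counting derivatives via Faà di Bruno and assigning to each surviving factor its regularity class.
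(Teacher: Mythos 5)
The paper itself does not prove this lemma directly; it simply invokes Theorem 4.3 of \cite{Llave1998}, so your plan of localizing and running Fa\`a di Bruno is a genuinely different route, and your $k=l=0$ case and your $k=l\ge 1$ case are essentially sound. The genuine gap is in the two mixed cases, where your exponent bookkeeping does not just need ``double-checking'' but goes in the wrong direction. On a bounded set an $\alpha$-H\"older bound implies a $\beta$-H\"older bound only for $\beta\le\alpha$, so your proposed upgrade of the estimate $|g(f(s))-g(f(r))|\le K^{g}L^{\eta}|s-r|^{\eta}$ to a $\gamma$-H\"older bound would require $\gamma\le\eta$, which is not assumed. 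The correct conclusions in the mixed cases are dictated by the \emph{rougher} of the two maps: for $k\ge1>l=0$ one gets $C^{0,\eta}$ (locally Lipschitz $f$ inside $\eta$-H\"older $g$), and for $l\ge1>k=0$ one gets $C^{0,\gamma}$ (locally Lipschitz $g$ outside $\gamma$-H\"older $f$), so your ``symmetric'' claim of $C^{0,\eta}$ in the latter case is false (take $g$ smooth and $f$ only $\gamma$-H\"older). Likewise, for $k>l\ge1$ your assertion that ``the bottleneck is $f$'s regularity, not $g$'s'' is backwards: $g$ is the less regular map, $g\circ f$ is in general only $l$ times differentiable (take $f=\mathrm{id}$ and $g\in C^{l,\eta}$ but not $C^{l+1}$), so one cannot ``differentiate $g\circ f$ exactly $k$ times'' as your sketch requires, and the conclusion there is $C^{l,\eta}$, not $C^{k,\gamma}$.

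This also explains why you could not make the argument close: the conclusions you were trying to force are the ones printed in the lemma, and in the two mixed cases these are swapped relative to the cited Theorem 4.3 of \cite{Llave1998}, which gives regularity $\min(k+\gamma,\,l+\eta)$ whenever $\max(k,l)\ge1$, i.e. $C^{l,\eta}$ when $k>l$ and $C^{k,\gamma}$ when $l>k$; no correct argument can yield the printed mixed-case conclusions, as the counterexamples above show. So keep the localization step, the $k=l=0$ estimate, and the Fa\`a di Bruno argument for $k=l\ge1$ together with your sublemma that a product of bounded Lipschitz, $\gamma$-H\"older and $\eta$-H\"older factors is $\min(\gamma,\eta)$-H\"older; but redo the mixed cases with the rough and smooth factors identified correctly, and nowhere rely on raising a H\"older exponent on a bounded set.
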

\begin{proof}
This is a special case of Theorem 4.3 in \cite{Llave1998}.
\end{proof}
As a consequence of Lemma \ref{L: Hölder compositions} we obtain the following immediately:
\begin{Corollary}\label{Regularity Lemma for copulas I}
Let $X\in C^{k,\gamma}(T;\mathbb R)$ almost surely such that $(t,x)\mapsto F_{X_t}(x)\in C^{l,\delta}(T\times \mathbb R;\mathbb R)$. Let $U$ denote the associated copula process given by
$U_t=F_t(X_t), t\in T$. Then almost surely
\begin{equation*}
U\in \begin{cases} C^{0,\gamma\eta}(T;\mathbb R) & k=l=0\\
C^{k,\gamma}(T;\mathbb R) & k>l\\
C^{l,\eta}(T;\mathbb R) & l>k\\
C^{k,\min(\gamma,\eta)}(T;\mathbb R) & l=k\geq 1.
\end{cases}
\end{equation*}
For a copula process $U\in C^{k,\gamma}(T;\mathbb R)$ almost surely and marginal cumulative distribution functions $(G_t)_{t\in T}$,
such that $(t,u)\mapsto G_t^{[-1]}(u)\in C^{l,\delta}(T\times (0,1);\mathbb R)$, $Y$ denotes the process given by $Y_t=G_t^{[-1]}(U_t)$. Then we have almost surely that
\begin{equation*}
Y\in \begin{cases} C^{0,\gamma\eta}(T;\mathbb R) & k=l=0\\
C^{k,\gamma}(T;\mathbb R) & k>l\\
C^{l,\eta}(T;\mathbb R) & l>k\\
C^{k,\min(\gamma,\eta)}(T;\mathbb R) & l=k\geq 1.
\end{cases}
\end{equation*}
\end{Corollary}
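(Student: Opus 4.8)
The plan is to reduce Corollary \ref{Regularity Lemma for copulas I} directly to Lemma \ref{L: Hölder compositions}, since the statement is essentially two applications of that composition rule with an appropriate book-keeping of the regularity of the maps involved. The key observation is that in both halves of the corollary the process we want to analyse is a composition of two maps: in the first half $U_t = F_{X_t}(X_t)$ is built from the path $t \mapsto X_t$ and the jointly regular function $(t,x)\mapsto F_{X_t}(x)$; in the second half $Y_t = G_t^{[-1]}(U_t)$ is built from $t\mapsto U_t$ and $(t,u)\mapsto G_t^{[-1]}(u)$. So the first step is to exhibit each of these as $g\circ f$ for suitable $f$ and $g$ to which Lemma \ref{L: Hölder compositions} applies.

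First I would handle the first half. Fix an $\omega$ in the almost sure event where $X\in C^{k,\gamma}(T;\mathbb R)$. Define $f:T\to T\times\mathbb R$ by $f(t) := (t, X_t(\omega))$. The identity component $t\mapsto t$ is $C^\infty$, hence smoother than any finite $C^{k,\gamma}$, and $t\mapsto X_t(\omega)$ lies in $C^{k,\gamma}(T;\mathbb R)$, so the pair $f$ lies in $C^{k,\gamma}(T;T\times\mathbb R)$ (a product of functions is as regular as its least regular component, and one takes a common $\gamma$; note that when $k\ge 1$ an identity component has Hölder exponent one so the minimum is still $\gamma$, and when $k=0$ one may likewise take the common exponent $\gamma$ since Hölder-one functions are $\gamma$-Hölder locally on compacts — or one simply notes $f(T)$ is covered by the relevant neighbourhoods). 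Set $g := F_{\cdot}(\cdot)\in C^{l,\delta}(T\times\mathbb R;\mathbb R)$. Then $U_\bullet(\omega) = g\circ f$, and Lemma \ref{L: Hölder compositions} (with the roles $\gamma\leftrightarrow\gamma$, $\eta\leftrightarrow\delta$) yields exactly the four-case conclusion stated (here I read the paper's ``$\eta$'' in the conclusion as the ``$\delta$'' of the hypothesis, matching the notation of Lemma \ref{L: Hölder compositions}). Since this holds for every $\omega$ in a full-measure set, we get the almost sure statement.

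For the second half the argument is verbatim the same: on the full-measure event $\{U\in C^{k,\gamma}(T;\mathbb R)\}$ put $f(t):=(t,U_t)\in C^{k,\gamma}(T;T\times(0,1))$ — here one uses that a copula process takes values in $[0,1]$, and restricting attention to the interior $(0,1)$ is harmless for the local Hölder property since on any compact subinterval of $T$ the values $U_t$ can be taken to avoid the endpoints, or one simply notes the statement is about local regularity and the relevant neighbourhoods of Lemma \ref{L: Hölder compositions} are chosen pointwise — and $g:=G_\cdot^{[-1]}(\cdot)\in C^{l,\delta}(T\times(0,1);\mathbb R)$, so $Y_\bullet = g\circ f$ and Lemma \ref{L: Hölder compositions} gives the claim.

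The only genuine subtlety — and the step I would be most careful about — is the first one: verifying that $t\mapsto(t,X_t)$ really lies in the Hölder class one needs to feed into Lemma \ref{L: Hölder compositions}, in particular in the case $k=l=0$ where the lemma is stated for a single exponent and where the composition exponent comes out as the \emph{product} $\gamma\delta$. One must check that pairing the $C^{0,1}$-ish identity map with a $C^{0,\gamma}$ function still produces an object the lemma accepts with outer regularity $C^{l,\delta}$, and trace through the four cases of the lemma to confirm the exponents land on $\gamma\delta$, $\gamma$, $\delta$, and $\min(\gamma,\delta)$ respectively. This is purely a matter of matching the regularity arithmetic in Lemma \ref{L: Hölder compositions} against the claimed cases; once that bookkeeping is done the corollary follows with no further work, which is why it is presented as an immediate consequence.
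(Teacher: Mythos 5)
Your proposal is correct and matches the paper's intent exactly: the paper offers no written argument beyond declaring the corollary an immediate consequence of Lemma \ref{L: Hölder compositions}, and your decomposition $U_\bullet(\omega)=g\circ f$ with $f(t)=(t,X_t(\omega))$ and $g=F_\cdot(\cdot)$ (respectively $f(t)=(t,U_t)$, $g=G_\cdot^{[-1]}(\cdot)$) is precisely the intended application, including the observation that the identity component is locally $\gamma$-Hölder so the pair stays in $C^{k,\gamma}$ and the reading of the statement's $\eta$ as the hypothesis' $\delta$. Your flagged care about the range staying in $T\times(0,1)$ in the second half is a point the paper itself glosses over, so no gap relative to the paper's own treatment.
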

 By virtue of the previous Corollary \ref{Regularity Lemma for copulas I} we can determine the regularity of a copula process underlying a fractional Brownian motion:
 \begin{Example}\label{L: Regularity of the Fractional Quantile Transform}
Assume that $(U_t)_{t\in (t_0,\infty)}$ is a copula process underlying a fractional Brownian motion $(B_{t}^H)_{t\in [t_0,\infty)}$ for some $t_0>0$ with Hurst parameter $H\in (0,1)$, that is, a centered Gaussian process with covariance function $$\mathbb{E}[B_t^H B_s^H]=\frac 12 \left(t^{2H}+s^{2H}+| t-s|^{2H}\right).$$
 The process $(U_t)_{t\in(t_0,\infty)}$ has locally $H$-H{\"o}lder continuous paths.
To see this, we just have to verify the local $H$-H{\"o}lder continuity of $(t,x)\mapsto\Phi^H_t(x)$ as stated in the Corollary \ref{Regularity Lemma for copulas I}, where we denoted by $\Phi^H_t$ the cumulative distribution functions of $B_t^H$.
We can estimate for $s,t\in [t_0,\infty)$ (with the constant $c=1/\sqrt{2\pi}$)
\begin{align*}
|\Phi^H_t(y)-\Phi^H_s(y)|=|\Phi_1^H\left(\frac{y}{t^H}\right)-\Phi_1^H\left(\frac{y}{s^H}\right)| 
=&|\int_{\min\left(\frac{y}{t^H},\frac{y}{s^H}\right)}^{\max\left(\frac{y}{t^H},\frac{y}{s^H}\right)} \frac{e^{-\frac{z^2}{2}}}{\sqrt{2 \pi}}dz\vert \\ \leq &  c|y| |t^{-H}-s^{-H}|\\
\leq & |y| \frac{c}{\min(t^{H},s^{H})^2} |t-s|^{H}\\
\leq & \frac{c|y|}{t_0^{2H}} |t-s|^{H}.
\end{align*}
Analogously, for $x,y\in\mathbb R$ we get 
\begin{align*}
|\Phi^H_t(x)-\Phi^H_t(y)|=|\Phi_1^H\left(\frac{x}{t^H}\right)-\Phi_1^H\left(\frac{y}{t^H}\right)|=&|\int_{\min\left(\frac{y}{t^H},\frac{x}{t^H}\right)}^{\max\left(\frac{y}{t^H},\frac{x}{t^H}\right)} \frac{e^{-\frac{z^2}{2}}}{\sqrt{2 \pi}}dz\vert \\ \leq & c|x-y| |t^{-H}|\\
 \leq & c|x-y| |t_0^{-H}|.
\end{align*}
By the triangle inequality we obtain the joint Hölder continuity
\begin{align*}
|\Phi^H_t(x)-\Phi^H_s(y)|=|\Phi^H_t(x)-\Phi^H_t(y)|+|\Phi^H_t(y)-\Phi^H_s(y)|\\\leq c \max\left(|t_0^{-H}|,\frac{|y|}{t_0^{2H}}\right) \left(|x-y| +  |t-s|^{H}\right).
\end{align*}
\end{Example}

\begin{Example}[Exponential Marginals and fBM copula]
Several modeling situations (e.g., when modelling stochastic volatility, interest rates, etc. in financial mathematics) necessitate positive stochastic processes. 
It is simple to see that copula constructions might lead to good interpretable and alternative methods to model such process, since we are free to put any continuous family of marginals onto a Gaussian process (this was for example suggested in \cite{WilsonGharamani2010}). 

As a simple example, take exponential marginals of the form
$$G_t(x):=\1_{x>0} \left(1-e^{-\frac x{t^H}}\right), \quad t\in [t_0,\infty), x\in\mathbb R$$
for some $t_0>0$, a (Hurst-)parameter $H=(0,1)$ corresponding to the copula process $(U_t)_{t\in T}$ of a fractional Brownian motion $B^H$ (we take the parameter $\frac 1{t^H}$ for the marginals to keep the same variance as the underlying fractional Brownian motion). By the smoothness of 
\begin{equation*}
G_t^{-1}(y)=-\log(1-y)t^h
\end{equation*}
we obtain that the transformed fractional Brownian motion 
\begin{equation*}
Y_t:=G_t^{-1}\left(\Phi_t^{H}(B_t^H)\right):=-\log\left(1-\int_{-\infty}^{B_t^H}\frac{e^{-\frac{z^2}{2t^{2H}}}}{\sqrt{2 \pi}t^H}dz\right)t^H=-\log\left(\int_{B_t^H}^{\infty}\frac{e^{-\frac{z^2}{2t^{2H}}}}{\sqrt{2 \pi}t^H}dz\right)t^H
\end{equation*}
has underlying Gaussian copula $U$, is $\gamma$-Hölder continuous for all $\gamma<H$ and has exponential marginals (with parameters $\frac 1{t^H}$). 

In \cite{Gatheral2018} it is argued empirically for lognormal marginals with a fractional Brownian motion copula for the stochastic volatility of asset prices. 
Our example shows that one can easily modify the marginals (to exponential, say, as in our example here), or other positively supported distributions, in so-called rough volatility models of asset prices. Moreover, the flexibility in the copula framework allows also to go beyond the specific dependency yielded by the copula induced by fractional Brownian motion.
\end{Example}

\subsubsection{Construction on Schauder Bases}\label{Sec:Schauder Basis Case}
In this section we will characterize
copula-constructed processes for random variables in  Banach spaces with a Schauder basis. This includes $L^p([0,1])$-spaces (with the Haar wavelets as Schauder basis), $C([0,1])$ (with the original Schauder basis), $l^p$-sequence spaces, and therefore in particular, all separable Hilbert spaces (with an orthonormal basis as Schauder basis). For a detailed account on the theory of bases in Banach spaces we refer to \cite{Heil2011}.
\begin{Definition}\label{Def: Schauder Basis}
A sequence $(e_n)_{n\in \mathbb{N}}\subseteq V$ of linearly independent vectors is called a basis of a locally convex Hausdorff space $V$, if for all $v\in V$ there is a unique sequence $a_n(v)$, such that 
$$v=\sum_{n\in \mathbb{N}} a_n(v) e_n,$$
where the series converges with respect to the locally convex topology on $V$. 
 A basis of $V^*$ is called  $weak^*$-basis of $V^*$, if it is a basis with respect to the $weak^*$-topology.
 If $V$ is a Banach space and $v\mapsto a_n(v)$ is continuous with respect to the norm topology 
for all $n\in\mathbb N$, we call
$(e_n)_{n\in \mathbb{N}}\subseteq V$ a Schauder basis.
\end{Definition}
The continuity of the function $(a_n)_{n\in \mathbb{N}}$ is automatically satisfied, if $E$ is a separable Banach space (see Theorem 3.1. in \cite{Singer1970}).
Note,
that every Banach space that possesses a basis is separable. However, for a separable Banach space, the existence of a basis cannot be guaranteed, due to the counterexample by Enflo in \cite{Enflo1973}. 
For a Banach space with Schauder basis we can verify, that the corresponding coefficient functions are always contained in the topological dual:
\begin{Lemma}
Let $V$ be a Banach space with Schauder basis $(e_n)_{n\in \mathbb{N}}$ and coefficient functions $(a_n)_{n\in \mathbb{N}}$. Then
$\lbrace a_n: n\in\mathbb N\rbrace\subset V^*$ and for $m,n\in\mathbb N$ we have
$$a_n(e_m)=\begin{cases}0 & m\neq n\\
1& m=n.\end{cases}$$
\end{Lemma}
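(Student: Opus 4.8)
The plan is to prove the two claims separately, starting from the defining relation $v = \sum_{n\in\mathbb N} a_n(v)\, e_n$ for the Schauder basis.

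First I would establish the biorthogonality relation $a_n(e_m) = \delta_{nm}$. For a fixed $m\in\mathbb N$, the vector $e_m$ itself has the representation $e_m = \sum_{n\in\mathbb N} a_n(e_m)\, e_n$ by definition of the coefficient functionals. On the other hand, $e_m = \sum_{n\in\mathbb N} \delta_{nm}\, e_n$ is a representation of $e_m$ as a norm-convergent series in the $e_n$'s (only the $m$-th term survives). By the uniqueness clause in the definition of a basis, these two coefficient sequences must coincide, i.e. $a_n(e_m) = \delta_{nm}$ for all $n$, which is the asserted formula.

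Next I would show $a_n \in V^*$, i.e. that each $a_n$ is a bounded linear functional. Linearity is immediate from uniqueness of the expansion: if $v = \sum_k a_k(v) e_k$ and $w = \sum_k a_k(w) e_k$, then $\alpha v + \beta w = \sum_k (\alpha a_k(v) + \beta a_k(w)) e_k$ is a valid expansion, so by uniqueness $a_k(\alpha v + \beta w) = \alpha a_k(v) + \beta a_k(w)$. Continuity (boundedness) is exactly the hypothesis: we are told $V$ is a Banach space with a \emph{Schauder} basis, and Definition \ref{Def: Schauder Basis} builds the continuity of each $v\mapsto a_n(v)$ with respect to the norm topology into the notion of Schauder basis. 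Hence each $a_n$ is a continuous linear functional, i.e. $a_n \in V^*$, and therefore $\{a_n : n\in\mathbb N\}\subset V^*$.

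There is essentially no serious obstacle here; the only subtlety worth a sentence is that the uniqueness of the coefficient expansion (part of the definition of a basis) is genuinely doing the work in both parts — it is what forces the biorthogonality and what gives linearity of the $a_n$ — while the continuity is simply part of the definition of ``Schauder basis'' rather than something to be derived. If one wished to be self-contained about continuity without invoking it as a definitional hypothesis, one could instead appeal to the cited fact (Theorem 3.1 in \cite{Singer1970}) that for a separable Banach space the coefficient functionals of a basis are automatically continuous; but since the statement already assumes a Schauder basis, this is not needed.
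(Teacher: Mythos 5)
Your proof is correct and follows essentially the same route as the paper: biorthogonality and linearity both fall out of the uniqueness of the basis expansion (comparing the two representations of $e_m$), while membership in $V^*$ is just the continuity built into the definition of a Schauder basis. The paper's own argument is a condensed version of exactly this, so nothing further is needed.
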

\begin{proof}
Linearity of the coefficients is clear due to uniqueness of the representation. Moreover for the same reason, $a_n(e_n)=1$ and $a_m(e_n)=0$ gives a valid series representation of $e_n$ for all $n\in\mathbb N$ and by uniqueness of this, the assertion follows.
\end{proof}
That the coefficient functionals are linearly independent and separate the points is a consequence of the following Theorem:
\begin{Theorem}
Let $V$ be a Banach space.
A sequence $(a_n)_{n\in \mathbb{N}}$ is a  $weak^*$ Schauder basis of $V^*$ if and only if there exists a Schauder basis $(e_n)_{n\in\mathbb{N}}$ of $V$ that has $(a_n)_{n\in \mathbb{N}}$ as its coefficient functionals. The coefficient functionals for the basis $(a_n)_{n\in \mathbb{N}}$ are then given by the bidual elements $(\iota_{e_n})_{n\in\mathbb{N}}$, where $\iota_v(v^*)=v^*(v)$.
\end{Theorem}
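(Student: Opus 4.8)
The plan is to prove the two implications by playing off the partial–sum projections of a basis of $V$ against those of a weak$^*$ basis of $V^*$ through the adjoint operation, and to extract the identity $c_n=\iota_{e_n}$ for the coefficient functionals of $(a_n)$ as a byproduct of each argument. Throughout I would use the standard identification $(V^*,\mathrm{weak}^*)^*=\iota(V)$, i.e.\ that a linear functional on $V^*$ is weak$^*$-continuous if and only if it is of the form $\iota_v$ for a (unique) $v\in V$.

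For the implication from a Schauder basis of $V$ to a weak$^*$ Schauder basis of $V^*$, suppose $(e_n)_{n\in\mathbb N}$ is a Schauder basis with coefficient functionals $(a_n)_{n\in\mathbb N}$, so that (by the previous lemma) $a_n\in V^*$ and $a_n(e_m)=\delta_{nm}$. For $v^*\in V^*$ I would look at $S_N^* v^*:=\sum_{n=1}^N v^*(e_n)\,a_n$ and note that for every $v\in V$,
\[
(S_N^* v^*)(v)=\sum_{n=1}^N v^*(e_n)\,a_n(v)=v^*\Big(\sum_{n=1}^N a_n(v)e_n\Big)\xrightarrow[N\to\infty]{} v^*(v),
\]
since $\sum_{n=1}^N a_n(v)e_n\to v$ in norm. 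Hence $v^*=\sum_n \iota_{e_n}(v^*)\,a_n$ in the weak$^*$ sense. Uniqueness of this expansion, as well as linear independence of $(a_n)$, follows by testing a hypothetical expansion $v^*=\sum_n c_n a_n$ against $e_m$ and invoking $a_n(e_m)=\delta_{nm}$, which forces $c_m=v^*(e_m)=\iota_{e_m}(v^*)$. Since each $\iota_{e_n}$ is weak$^*$-continuous, this shows $(a_n)$ is a weak$^*$ Schauder basis of $V^*$ with coefficient functionals $(\iota_{e_n})$.

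For the converse, assume $(a_n)$ is a weak$^*$ Schauder basis of $V^*$ with coefficient functionals $c_n$, which are weak$^*$-continuous; write $c_n=\iota_{e_n}$ for the unique $e_n\in V$. First I would get biorthogonality $a_m(e_n)=\iota_{e_n}(a_m)=c_n(a_m)=\delta_{nm}$ from uniqueness of the weak$^*$ expansion of $a_m$. Then I would introduce the finite-rank (hence bounded) operators $P_N v:=\sum_{n=1}^N a_n(v)e_n$ on $V$, whose adjoints are $P_N^* v^*=\sum_{n=1}^N \iota_{e_n}(v^*)\,a_n$, so that $P_N^* v^*\to v^*$ weak$^*$ for every $v^*$ by hypothesis. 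Upgrading this to $\sup_N\|P_N\|=\sup_N\|P_N^*\|<\infty$ is the key: for fixed $v^*$ and $v$ the scalars $(P_N^* v^*)(v)$ converge, hence are bounded, so Banach--Steinhaus gives $\sup_N\|P_N^* v^*\|<\infty$, and a second application of Banach--Steinhaus (over $V^*$) gives $\sup_N\|P_N^*\|<\infty$. Next, $\overline{\mathrm{span}}\{e_n\}=V$: if $v^*\in V^*$ annihilated all $e_n$, then $P_N^* v^*=0$ for all $N$, forcing $v^*=0$ by weak$^*$ convergence and Hahn--Banach. Since $P_N e_m=e_m$ for $N\ge m$ by biorthogonality, a standard $3\varepsilon$-argument using the uniform bound yields $P_N v\to v$ in norm for every $v\in V$, and uniqueness of the expansion together with linear independence of $(e_n)$ follows by applying the continuous functionals $a_m$. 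Thus $(e_n)$ is a Schauder basis of $V$ with coefficient functionals $(a_n)$, and by construction $c_n=\iota_{e_n}$, which is the asserted identity.

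The only genuinely non-formal step is the uniform bound $\sup_N\|P_N\|<\infty$ in the second implication: weak$^*$ convergence of $(P_N^* v^*)_N$ does not by itself supply a norm bound, so one must route it through two nested applications of the uniform boundedness principle, using crucially that both $V$ and $V^*$ are Banach spaces. Everything else — biorthogonality, density of $\mathrm{span}\{e_n\}$ via Hahn--Banach, and the passage from a dense subspace to all of $V$ — is routine once this estimate is available.
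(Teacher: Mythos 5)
Your proof is correct. Note, however, that the paper does not prove this statement at all: it simply cites Theorem 14.1 in Singer's \emph{Bases in Banach Spaces I}, so any comparison is with the classical argument rather than with something in the text. What you have written is essentially that classical proof, carried out in full: the forward direction via norm convergence of $\sum_{n\le N}a_n(v)e_n$ tested against $v^*$, and the converse via the biorthogonality $a_m(e_n)=\delta_{nm}$, the adjoint partial-sum operators $P_N^*$, a double application of Banach--Steinhaus to get $\sup_N\|P_N\|<\infty$ (correctly routed through the adjoints, since pointwise boundedness of $P_Nv$ in $V$ is not available a priori), density of $\mathrm{span}\{e_n\}$ via Hahn--Banach, and the $3\varepsilon$-argument. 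You also handle the one genuinely delicate interpretive point properly: ``weak$^*$ Schauder basis'' must mean that the coefficient functionals are weak$^*$-continuous (so that they are of the form $\iota_{e_n}$ via $(V^*,\mathrm{weak}^*)^*=\iota(V)$); with only a weak$^*$ basis this identification can fail, and your proof makes explicit where that hypothesis enters. The identity $c_n=\iota_{e_n}$ falls out of both directions exactly as the statement asserts, so your write-up supplies a self-contained justification where the paper relies on an external reference.
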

\begin{proof}
See Theorem 14.1. in  \cite{Singer1970}.
\end{proof}
Observe that for a Banach space with Schauder basis, the corresponding set 
\begin{equation*}
    M=\lbrace a_n:n\in\mathbb N\rbrace
\end{equation*}
 of coefficient marginals satisfies all the conditions of Definition \ref{D: W-Marginals}. Embedding \eqref{Product space embedding} reads now
\begin{equation}\label{Basis embedding}
    v\mapsto (a_n(v))_{n\in\mathbb N}.
\end{equation}

\begin{Theorem}\label{T: Basis check up}
Let $V$ be a Banach space with Schauder basis $(e_n)_{n\in \mathbb{N}}$ and coefficient functions $(a_n)_{n\in \mathbb{N}}$. 
The following are equivalent
\begin{itemize}
    \item[(i)]A sequence $(a_n)_{n\in\mathbb N}$ is in the range of the embedding \eqref{Basis embedding};
    \item[(ii)]$\sum_{n=1}^{\infty}a_n e_n\in V$ is a convergent series in the norm topology;
    \item[(iii)] $\sup_{N\in\mathbb N}\|\sum_{n=1}^N a_n e_n\|<\infty$.
\end{itemize}
\end{Theorem}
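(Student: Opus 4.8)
The plan is to prove $(i)\Leftrightarrow(ii)$ directly and to close the loop with $(ii)\Rightarrow(iii)\Rightarrow(ii)$. The implication $(ii)\Rightarrow(iii)$ is immediate, a norm-convergent series having bounded partial sums. For $(i)\Rightarrow(ii)$ there is nothing to prove beyond the definition of a Schauder basis: if $(a_n)_n=(a_n(v))_n$ for some $v\in V$, then $v=\sum_n a_n(v)e_n=\sum_n a_ne_n$ converges in norm by Definition \ref{Def: Schauder Basis}. For $(ii)\Rightarrow(i)$, suppose $v:=\sum_{n=1}^{\infty}a_ne_n$ converges in $V$; applying the continuous coefficient functional $a_m$ and using the biorthogonality $a_m(e_n)=\delta_{mn}$ established above gives $a_m(v)=\lim_{N\to\infty}\sum_{n=1}^{N}a_na_m(e_n)=a_m$ for every $m$, so $(a_n)_n=(a_n(v))_n$ lies in the range of \eqref{Basis embedding}. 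Once these routine steps are in place, the only substantive content is $(iii)\Rightarrow(ii)$.

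For the latter I would argue in the bidual. Set $w_N:=\sum_{n=1}^{N}a_ne_n$; by $(iii)$ the family $(\iota_{w_N})_N$ is bounded in $V^{**}$, so Banach--Alaoglu yields a $\text{weak}^*$ cluster point $\xi\in V^{**}$, and since $\iota_{w_N}(a_m)=a_m(w_N)=a_m$ whenever $N\geq m$, we obtain $\xi(a_m)=a_m$ for every $m$. If one can then show $\xi=\iota_v$ for some $v\in V$, it follows that $a_m(v)=a_m$ for all $m$, hence the Schauder expansion of $v$ is $\sum_n a_ne_n$ and $(ii)$ holds. When $V$ is reflexive the bidual detour is unnecessary: $\text{weak}^*$ and weak topologies on $V=V^{**}$ coincide, so by Eberlein--\v{S}mulian a subsequence satisfies $w_{N_k}\rightharpoonup v$ weakly, whence $a_m(v)=\lim_k a_m(w_{N_k})=a_m$ and the expansion of $v$ is again $\sum_n a_ne_n$.

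The genuine obstacle is exactly this passage from the cluster point back into $V$: the implication $(iii)\Rightarrow(ii)$ is precisely the assertion that the Schauder basis $(e_n)$ is \emph{boundedly complete}, and this is not automatic. Already the unit vector basis of $c_0$ satisfies $(iii)$ with $a_n\equiv 1$ but violates $(ii)$, and by James's theorem neither $C([0,1])$ nor $L^1([0,1])$ admits a boundedly complete basis at all. I would therefore carry out $(iii)\Rightarrow(ii)$ under the hypothesis that the basis is boundedly complete --- which holds in particular whenever $V$ is reflexive, hence for separable Hilbert spaces and for $L^p([0,1]),l^p$ with $1<p<\infty$, and for the unit vector basis of $l^1$ --- in which case the argument above applies; for general $V$ the equivalence should be read with this assumption added, or else restricted to the part $(i)\Leftrightarrow(ii)$. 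One could also try to dispense with the bidual and work from the uniform bound $\sup_N\|S_N\|<\infty$ on the partial-sum projections $S_Nv:=\sum_{n=1}^N a_n(v)e_n$ (the basis constant, via the uniform boundedness principle), but I do not see how to extract Cauchyness of $(w_N)$ from $(iii)$ without the bounded-completeness input, so that is where the real work --- or the missing hypothesis --- must sit.
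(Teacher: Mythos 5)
Your handling of (i)$\Leftrightarrow$(ii) and of (ii)$\Rightarrow$(iii) is correct and is essentially the only argument available: (i)$\Rightarrow$(ii) is the definition of a Schauder basis, (ii)$\Rightarrow$(i) follows by applying the (continuous) coefficient functionals and biorthogonality to the limit, and (ii)$\Rightarrow$(iii) is immediate. There is nothing to compare against on the paper's side, since the paper gives no argument at all here but simply cites Theorem 4.13 of \cite{Heil2011}.

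Your objection to (iii)$\Rightarrow$(ii) is the substantive point, and it is correct: that implication is exactly the assertion that $(e_n)$ is a \emph{boundedly complete} basis, which is not a consequence of being a Schauder basis, so no citation can deliver it in the stated generality. The unit vector basis of $c_0$ with $a_n\equiv 1$ satisfies (iii) but neither (i) nor (ii); and since a space with a boundedly complete basis is isomorphic to a separable dual space (James), no basis of $C([0,1])$ or of $L^1([0,1])$ --- both listed as examples immediately before this theorem --- can be boundedly complete. Hence the theorem as stated needs either the added hypothesis that the basis is boundedly complete (automatic when $V$ is reflexive, so for separable Hilbert spaces and $l^p$, $L^p([0,1])$ with $1<p<\infty$, and also true for the unit vector basis of $l^1$), or its conclusion must be weakened to (i)$\Leftrightarrow$(ii)$\Rightarrow$(iii). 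The damage downstream is limited: the $l^p$ corollary survives because the standard basis of $l^p$, $1\le p<\infty$, is boundedly complete, but the corollary identifying Borel measurability of $X=\sum_n X_n e_n$ with $\sup_{N}\|\sum_{n=1}^N X_n e_n\|<\infty$ almost surely inherits the same gap in, e.g., $C([0,1])$. Your bidual/Eberlein--\v{S}mulian sketch is a fine proof in the reflexive (more generally boundedly complete) case, and your admission that Cauchyness cannot be extracted from (iii) alone is accurate: the missing hypothesis, not a missing trick, is what separates your attempt from a full proof of the statement as printed.
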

\begin{proof}
See Theorem 4.13 in \cite{Heil2011}.
\end{proof}
Thus, for the checkup of the Construction \ref{abstract construction} we have the following:
\begin{Corollary}
Let $V$ be a Banach space with Schauder basis $(e_n)_{n\in\mathbb{N}}$ 
and $(X_n)_{n\in\mathbb N}$ be a stochastic process. 
Then the following are equivalent:
\begin{itemize}
\item[(i)] $X=\sum_{n=1}^{\infty} X_n e_n$ is a Borel measurable random variable in $V$;
\item[(ii)]\label{Norm condition for Schaudercopulas}
$\sup_{N\in\mathbb{N}}\|\sum_{n=1}^N X_n e_n\| < \infty \quad \mathbb{P}-\text{almost surely}$.
\end{itemize}
\end{Corollary}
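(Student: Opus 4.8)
The plan is to obtain the equivalence by combining two results already at our disposal: the purely deterministic characterisation of Theorem~\ref{T: Basis check up}, applied pathwise to the fixed scalar sequence $(X_n(\omega))_{n\in\mathbb{N}}$ for each $\omega$, and the measurability criterion of Lemma~\ref{L: Measurability is not a problem in separable Banach spaces}. Two preliminary remarks make the latter applicable. First, a Banach space possessing a Schauder basis is separable, so $V$ is separable. Second, $M=\{a_n:n\in\mathbb{N}\}$ is a fundamental subset of $V^*$ for the $\mathrm{weak}^*$-topology: by the quoted characterisation of $\mathrm{weak}^*$ Schauder bases, $(a_n)_{n\in\mathbb{N}}$ is a $\mathrm{weak}^*$ Schauder basis of $V^*$, hence every $v^*\in V^*$ is a $\mathrm{weak}^*$-limit of elements of $\mathrm{lin}(M)$, i.e. $\mathrm{lin}(M)$ is $\mathrm{weak}^*$-dense. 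Recall also that this $M$ satisfies the conditions of Definition~\ref{D: W-Marginals} and that the embedding \eqref{Basis embedding} is exactly \eqref{Product space embedding} for this choice of $M$.

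\emph{(i)$\Rightarrow$(ii).} Assertion (i) presupposes that $\sum_{n=1}^{\infty}X_n(\omega)e_n$ converges in norm for $\mathbb{P}$-almost every $\omega$. A convergent series in a normed space has bounded partial sums, so $\sup_{N\in\mathbb{N}}\|\sum_{n=1}^{N}X_n(\omega)e_n\|<\infty$ for $\mathbb{P}$-almost every $\omega$, which is (ii). (Equivalently, this is the implication (ii)$\Rightarrow$(iii) of Theorem~\ref{T: Basis check up} used pathwise.)

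\emph{(ii)$\Rightarrow$(i).} Let $\tilde\Omega$ be the event of full $\mathbb{P}$-measure on which $\sup_{N\in\mathbb{N}}\|\sum_{n=1}^{N}X_n(\omega)e_n\|<\infty$. For $\omega\in\tilde\Omega$, the equivalence of (iii), (ii) and (i) in Theorem~\ref{T: Basis check up} gives that $(X_n(\omega))_{n\in\mathbb{N}}$ lies in the range of \eqref{Basis embedding} and that $\sum_{n=1}^{\infty}X_n(\omega)e_n$ converges in $V$. Hence, viewing $(X_n)_{n\in\mathbb{N}}$ as the process $(X_m)_{m\in M}$ via the identification $a_n\leftrightarrow n$, the process is almost surely in the range of \eqref{Product space embedding}. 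Applying Lemma~\ref{L: Measurability is not a problem in separable Banach spaces} to its law yields a Borel measurable random variable $\tilde X$ in $V$ and an event of full $\mathbb{P}$-measure on which $a_n(\tilde X)=X_n$ for all $n\in\mathbb{N}$; on this event, expansion in the Schauder basis gives $\tilde X=\sum_{n}a_n(\tilde X)e_n=\sum_{n}X_ne_n=X$. Thus $X=\tilde X$ $\mathbb{P}$-a.s., and since $(\Omega,\mathcal{F},\mathbb{P})$ is complete, $X$ is Borel measurable (setting $X:=0$ off the null set where the series diverges).

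I do not expect a genuine mathematical obstacle here: the substance is carried by the cited Theorem~\ref{T: Basis check up} and Lemma~\ref{L: Measurability is not a problem in separable Banach spaces}. The points that need care are bookkeeping ones: verifying the hypotheses of Lemma~\ref{L: Measurability is not a problem in separable Banach spaces} (separability of $V$ and $\mathrm{weak}^*$-fundamentality of $M$), confirming that the law of $(X_n)_{n\in\mathbb{N}}$ is of the type handled there, and passing from the almost sure coincidence of $X$ with a Borel random variable to Borel measurability of $X$ itself via completeness of the probability space.
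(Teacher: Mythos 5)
Your proposal is correct and matches the paper's own argument, which simply cites Theorem~\ref{T: Basis check up} together with Lemma~\ref{L: Measurability is not a problem in separable Banach spaces}; you have merely spelled out the verifications the paper leaves implicit (separability of $V$, weak$^*$-fundamentality of $\{a_n\}$, the pathwise use of the basis characterisation, and completeness of the probability space to pass from $\tilde X$ to $X$). No gaps.
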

\begin{proof}
This follows directly from Theorem \ref{T: Basis check up} and Lemma \ref{L: Measurability is not a problem in separable Banach spaces}.
\end{proof}
Let us now describe how we can construct a Banach space probability measure with predescribed dependence structure and marginals for the basis components:
\begin{Construction}\label{basis construction}\hspace{10cm}
\begin{enumerate}[label={(\roman{enumi})}]
\item\label{First Step in Basis Construction} $V$ a Banach space with  Schauder basis $(e_n)_{n\in \mathbb{N}}\subseteq V$;
\item\label{Second Step in Basis Construction} Choose a copula measure $C$ on $\mathbb{R}^{\mathbb N}$ (which models the dependency between basis elements) and marginals $(\mu_n)_{n\in\mathbb N}$.
 Merge them to a law  of a random sequence $(X_n)_{n\in\mathbb N}$  
 taking values in $\mathbb{R}^{\mathbb{N}}$ via Sklar's Theorem \ref{T: Sklar in infinite dimensions}.
\item\label{Third Step in Basis Construction} Define 
$X:=\sum_{n\in \mathbb N}X_n e_n.$
\item\label{Fourth Step in Basis Construction} Check if this sum converges in $V$ almost surely (corresponding to Corollary \ref{Norm condition for Schaudercopulas}).
\end{enumerate}
\end{Construction}
For the verification of \ref{Fourth Step in Basis Construction} we obtain conditions on the moments of the marginals.
\begin{Corollary}
Let $X$ be given as in Construction \ref{basis construction}\ref{Third Step in Basis Construction}. 
Then
 $X\in L^1(\Omega;V,\mathcal{B}(V))$ if the marginals have finite first moment and 
\begin{equation}\label{sufficient moment condition}
\sum_{n=1}^{\infty} \mathbb{E}[|X_n|]<\infty.
\end{equation}
\end{Corollary}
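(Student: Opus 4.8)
The plan is to reduce the claim to a statement about convergence of the random series $X = \sum_{n\in\mathbb N} X_n e_n$ in $L^1(\Omega;V)$, and then invoke the previous Corollary to get Borel measurability. First I would recall that $(e_n)_{n\in\mathbb N}$ is a Schauder basis, so there is a \emph{basis constant} $K := \sup_N \|P_N\| < \infty$, where $P_N v := \sum_{n=1}^N a_n(v) e_n$ is the $N$-th canonical projection; this is the standard fact that the partial-sum operators are uniformly bounded (Heil, \cite{Heil2011}). From $K < \infty$ one immediately gets $\|e_n\| = \|P_n e_n - P_{n-1} e_n\| \le 2K$ for every $n$, so the $\|e_n\|$ are uniformly bounded. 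Hence the partial sums $S_N := \sum_{n=1}^N X_n e_n$ satisfy, for $N \ge M$,
\begin{equation*}
\mathbb{E}\left[\|S_N - S_M\|\right] \le \sum_{n=M+1}^N \|e_n\|\,\mathbb{E}[|X_n|] \le 2K \sum_{n=M+1}^{N} \mathbb{E}[|X_n|],
\end{equation*}
which by \eqref{sufficient moment condition} is the tail of a convergent series and therefore tends to $0$. Thus $(S_N)_{N\in\mathbb N}$ is Cauchy in $L^1(\Omega;V)$, and since $L^1(\Omega;V)$ is complete (this uses that $V$ is a Banach space), it has a limit $Z \in L^1(\Omega;V)$.

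Next I would identify $Z$ with $X$ in the sense of Construction \ref{basis construction}\ref{Third Step in Basis Construction}. Passing to a subsequence along which $S_{N_k} \to Z$ almost surely in $V$, we get in particular that $\sum_n X_n e_n$ converges in $V$ on a set of full probability, so by Theorem \ref{T: Basis check up} (equivalently its Corollary) the sum $X := \sum_{n\in\mathbb N} X_n e_n$ is almost surely a well-defined element of $V$ and equals $Z$ a.s.; in particular condition (ii) of the Corollary preceding Construction \ref{basis construction} holds, so $X$ is Borel measurable in $V$. Since $X = Z$ a.s.\ and $Z \in L^1(\Omega;V)$, we conclude $X \in L^1(\Omega;V,\mathcal B(V))$.

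The only genuinely non-routine ingredient is the uniform boundedness of the canonical projections $P_N$ (hence of the $\|e_n\|$): this is what lets us pull the scalar summability condition \eqref{sufficient moment condition} through to summability of $\|X_n e_n\|$ in $L^1$. This is precisely the content of the Schauder basis assumption — the coefficient functionals $a_n$ being continuous forces, via the uniform boundedness principle applied to $(P_N)$, the bound $K < \infty$ — and it is already implicit in the machinery behind Theorem \ref{T: Basis check up}, so I would simply cite \cite{Heil2011} for it rather than reprove it. Everything else is the standard absolute-convergence argument: $\sum_n \mathbb{E}\|X_n e_n\| \le 2K \sum_n \mathbb{E}[|X_n|] < \infty$ gives unconditional convergence of the series in the Banach space $L^1(\Omega;V)$, and completeness does the rest. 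One small point worth a sentence in the write-up is that the marginals having finite first moment is needed merely to ensure each individual term $X_n e_n$ lies in $L^1(\Omega;V)$, so that the partial sums $S_N$ are legitimately elements of $L^1(\Omega;V)$ to begin with.
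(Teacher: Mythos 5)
Your overall route is the same as the paper's: control the partial sums $S_N=\sum_{n=1}^N X_n e_n$ by the triangle inequality through $\sum_n |X_n|\,\|e_n\|$, use \eqref{sufficient moment condition} to make this summable (in expectation, hence almost surely), and then invoke Corollary \ref{Norm condition for Schaudercopulas} (equivalently Theorem \ref{T: Basis check up} plus Lemma \ref{L: Measurability is not a problem in separable Banach spaces}) for Borel measurability. Your additional work — showing $(S_N)$ is Cauchy in $L^1(\Omega;V)$, extracting an a.s.\ convergent subsequence and identifying the limit with $X$ — is correct and in fact records more detail than the paper, whose proof is exactly the one-line "triangle inequality plus Corollary \ref{Norm condition for Schaudercopulas}".

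The genuine flaw is the step $\|e_n\|=\|P_n e_n-P_{n-1}e_n\|\le 2K$. Since $P_n e_n=e_n$ and $P_{n-1}e_n=0$, that computation only yields the tautology $\|e_n\|\le 2K\|e_n\|$; what the uniform bound on the canonical projections actually gives is $\|a_n\|\,\|e_n\|\le 2K$, which is not a bound on $\|e_n\|$ alone. A Schauder basis need not have uniformly bounded elements: rescaling a normalized basis via $e_n\mapsto n\,e_n$, $a_n\mapsto a_n/n$ produces another Schauder basis with $\|e_n\|=n$. With such a basis the hypothesis $\sum_n\mathbb{E}[|X_n|]<\infty$ does not control $\sum_n|X_n|\,\|e_n\|$ — e.g.\ in $l^2$ with $e_n=n\delta_n$ and deterministic $X_n=n^{-3/2}$ one has $\sum_n X_n<\infty$ while $\sum_n X_n e_n$ has coefficients $n^{-1/2}$ and so does not even lie in $V$ — hence your very first estimate $\mathbb{E}\|S_N-S_M\|\le 2K\sum_{n=M+1}^N\mathbb{E}[|X_n|]$ fails. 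The repair is either to assume the basis is normalized (or seminormalized), which is what the paper's own triangle-inequality argument tacitly requires as well, or to read the summability hypothesis as $\sum_n\mathbb{E}[|X_n|]\,\|e_n\|<\infty$; under either fix the remainder of your argument (completeness of $L^1(\Omega;V)$, a.s.\ subsequence, Theorem \ref{T: Basis check up} for measurability) goes through verbatim.
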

\begin{proof}
This follows immediately by using Corollary \ref{Norm condition for Schaudercopulas} and the triangular inequality.
\end{proof}
In case of sequence spaces, we obtain even sufficient and necessary conditions to construct laws with finite moments of certain order. Denote 
\begin{equation*}
l^p:=\left\lbrace (x_n)_{n\in\mathbb{N}}\subset \mathbb{R}^{\mathbb N}: \| (x_n)_{n\in\mathbb{N}}\|_p:=\left(\sum_{n=1}^{\infty} |x_n|^p\right)^{\frac 1p}<\infty\right\rbrace
\end{equation*}
for some $p\in [1,\infty)$.
\begin{Corollary}\label{C:lp moment condition}
 Let $V=l^p$ and $X$ be given as in Construction \ref{basis construction}\ref{Third Step in Basis Construction}. Then
 $X\in L^p(\Omega;l^p,\mathcal{B}(l^p))$   if and only if the marginals have finite $p$th moment and
\begin{equation}\label{moment condition for lp}
 \sum_{n=1}^{\infty} \mathbb{E}[|X_n|^p] < \infty.
\end{equation}
\end{Corollary}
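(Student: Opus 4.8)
The plan is to reduce everything to a direct application of Tonelli's theorem, exactly as in the proof of Lemma \ref{L:Processes that are p integrable}(b), now with counting measure on $\mathbb N$ playing the role of $\mu$ on $T$. First I would observe that $X_n = F_{\mu_n}^{[-1]}(U_n)$ is measurable and real-valued for each $n$, so each $|X_n|^p$ is a nonnegative measurable function on $\Omega$, and $\mathbb E[|X_n|^p] = \int |x|^p\,\mu_n(\dm x)$ is precisely the $p$th moment of the $n$th marginal. Hence the hypotheses ``the marginals have finite $p$th moment and $\sum_{n=1}^\infty \mathbb E[|X_n|^p]<\infty$'' are together equivalent to the single statement $\sum_{n=1}^\infty \mathbb E[|X_n|^p]<\infty$ (finiteness of each summand being implied by finiteness of the sum), which by Tonelli applied to the product of $\mathbb P$ and counting measure equals $\mathbb E\bigl[\sum_{n=1}^\infty |X_n|^p\bigr] = \mathbb E\bigl[\|X\|_p^p\bigr]$, where the latter may a priori be $+\infty$.

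For the ``if'' direction, suppose $\sum_{n=1}^\infty \mathbb E[|X_n|^p]<\infty$. Then $\mathbb E[\|X\|_p^p]<\infty$, so $\|X\|_p<\infty$ almost surely, i.e. $X$ takes values in $l^p$ almost surely. It remains to upgrade this to Borel measurability of $X$ as an $l^p$-valued map and to the integrability $X\in L^p(\Omega;l^p)$. For the measurability I would invoke Corollary \ref{Norm condition for Schaudercopulas} (with $V=l^p$ and the standard unit-vector basis $(e_n)$ as Schauder basis): since $\|\sum_{n=1}^N X_n e_n\|_p^p = \sum_{n=1}^N |X_n|^p \uparrow \|X\|_p^p < \infty$ a.s., condition (ii) there holds, so $X=\sum_n X_n e_n$ is a Borel measurable random variable in $l^p$. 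Then $\mathbb E[\|X\|_p^p]<\infty$ is exactly the statement $X\in L^p(\Omega;l^p)$.

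For the ``only if'' direction, suppose $X\in L^p(\Omega;l^p)$. Then $\mathbb E[\|X\|_p^p]<\infty$, and since each coordinate functional $a_n = \langle e_n,\cdot\rangle$ is continuous on $l^p$, we have $|X_n|\le \|X\|_p$ pointwise, so $\mathbb E[|X_n|^p]<\infty$ for every $n$; moreover by Tonelli $\sum_{n=1}^\infty \mathbb E[|X_n|^p] = \mathbb E[\|X\|_p^p] < \infty$, giving both conclusions. I do not expect any genuine obstacle here: the only point requiring care is making the interchange of sum and expectation rigorous, which is legitimate because all terms $|X_n|^p$ are nonnegative (Tonelli, not Fubini), and making sure the measurability claim is routed through Corollary \ref{Norm condition for Schaudercopulas} rather than asserted directly. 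Everything else is bookkeeping identifying $\mathbb E[|X_n|^p]$ with the $p$th moment of $\mu_n$ and $\sum_n|X_n|^p$ with $\|X\|_p^p$.
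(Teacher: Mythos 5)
Your proposal is correct and follows essentially the same route as the paper's proof: take the standard unit vectors as the Schauder basis of $l^p$, note that $\sup_{N}\bigl\|\sum_{n=1}^{N}X_n\delta_n\bigr\|_p^p=\sum_{n=1}^{\infty}|X_n|^p$ so that Corollary \ref{Norm condition for Schaudercopulas} settles measurability, and interchange sum and expectation (Tonelli/monotone convergence) to identify $\mathbb{E}[\|X\|_p^p]$ with $\sum_{n}\mathbb{E}[|X_n|^p]$. You merely spell out the two implications and the Tonelli step more explicitly than the paper does.
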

\begin{proof}
The standard basis $(\delta_n)_{n\in\mathbb{N}}$ is the sequence which has components equal to zero everywhere, except on the $n$'th entry, where it is 1.
This defines a Schauder basis on $l^p$ with coefficient functionals $\delta_i^*$ given by $\delta_i^*((x_n)_{n\in\mathbb N})=x_i$, since
\begin{equation*}
(x_n)_{n\in\mathbb{N}}=\sum_{i=1}^{\infty} x_i \delta_i=\sum_{i=1}^{\infty} \delta_i^*((x_n)_{n\in\mathbb{N}}) \delta_i.
\end{equation*}
Thus,
\begin{align*}
\sup_{N\in\mathbb{N}}\left\|\sum_{n=1}^N X_n \delta_n\right\|_p^p=\sup_{N\in\mathbb{N}} \sum_{n=1}^N |X_n|^p= \sum_{n=1}^{\infty} |X_n|^p.
\end{align*}
This implies 
\begin{align*}
\mathbb{E}[\| X\|_p^p]=\sum_{n=1}^{\infty} \mathbb{E}[|X_n|^p]<\infty.
\end{align*}
The assertion follows.
\end{proof}

\begin{Remark}
Observe that Corollary \ref{C:lp moment condition} generalises Corollary 4.3 in \cite{MR3574701}, where the case of separable Hilbert spaces, that is $p=2$, was considered and the notion of a Schauder basis is reduced to the concept of orthonormal bases. 
\end{Remark}

The results derived above just impose conditions on the marginals, which makes them useful from a practical viewpoint. 
Still, the concept of copulas for random variables in the space $L^p(\Omega;l^p)$, or equivalent for laws in the Wasserstein space $\mathcal{W}_p(l^p)$ (see \eqref{Wasserstein space} below) is characterized completely by Corollary \ref{C:lp moment condition}. 
We will obtain another characterization of copulas as underlying solutions to certain restricted optimization problems in these Wasserstein spaces in the next section.

\section{Robustness of the Copula Construction}\label{sec: Robustness of the copula construction}
The previous section suggested that copula theory is well suited for the spaces $\mathcal L^p(T):=\mathcal L^p(T,\mathcal B (T),\mu;\mathbb R)$ for a finite Borel measure $\mu$ and $T\subset \mathbb{R}^d$ a compact interval
and the sequence spaces $l^p$, due to simple moment criteria to overcome the construction problem.
In this section we will provide distance estimates of random variables in these  
spaces in terms of their copula and marginals separately.

Hereafter we shorten the notation as follows: 
For a random variable  $X$ with values in $E$ (where $E$ equals $l^p$ or $\mathcal{L}^p(T)$ respectively) we denote the operators 
\begin{equation*}
F_X(x)_n:=F_{X_n}(x_n),\,\,\, n\in \mathbb N \quad  (    F_X(f)(t):=F_{X_t}(f(t)),\,\,\,t\in T \,\,\text{ respectively})
\end{equation*}
and
\begin{equation*}
 F_X^{[-1]}(x)_n:=F_{X_n}^{[-1]}(x_n),\,\,\, n\in \mathbb N \quad (  F^{[-1]}_X(f)(t):=F_{X_t}^{[-1]}(f(t)),\,\,\,t\in T  \,\,\text{ respectively})
\end{equation*}
for all $x\in l^p$ (and $f\in\mathcal{L}^p(T)$ respectively). Moreover, we use the notation $U^X$ for the underlying copula process of $X$.
We will for convenience switch between the spaces $\mathcal{L}^p(T)$ and $L^p(T)$ whenever there is no confusion. If we say that an $[X]\in L^p(T)$ has underlying copula process $U\in \mathcal L^p(T)$, we mean that there is a representative $X\in \mathcal L^p(T)$ of the corresponding element, that has this path copula. We will also drop equivalence class notation from time to time, to ease the writing (especially, when we work with Wasserstein spaces in the next section) and just refer to the representative $X$, no matter if we mean the equivalence class or the actual stochastic process.

\subsection{Copulas and Wasserstein Spaces}\label{sec: Copulas and Wasserstein spaces}
In this subsection we characterize copulas for measures in Wasserstein spaces. 
For two laws $\nu^1$ and $\nu^2$ on $E$ we write $\rho<^{\nu^2}_{\nu^1}$ for a law $\rho$ on $E\times E$ that has marginal distributions $\nu^1$ and $\nu^2$, that is $\rho$ is a coupling of $\nu^1$ and $\nu^2$.
Recall that the $p$-Wasserstein space over a separable Banach space $E$ is a complete separable metric space (see e.g.
 \cite{Villani2008}) given by
\begin{equation}\label{Wasserstein space}
    \mathcal{W}_p(E):=\left\lbrace \nu: \nu \text{ is a Borel law on }E,\,\int_{E} \| x\|_E^p \nu(dx) <\infty\right\rbrace
\end{equation}
equipped with the metric (in the case that we interpret $E=L^p(T)$ instead of $E=\mathcal L^p(T)$)
\begin{equation*}
    d(\nu^1,\nu^2)=:\mathbb{W}_p(\nu^1,\nu^2):=\inf_{\rho<^{\nu^2}_{\nu^1}}\left(\int_{E\times E} \|x-y\|_E^p\rho(dx dy)\right)^{\frac 1p}.
\end{equation*}
If there are two random variables $X\sim \nu^1$ and $Y\sim \nu^2$, we also say that $(X,Y)$ is a coupling and write $\mathbb{W}_p(\nu^1,\nu^2)=\mathbb{W}_p(X,Y)$.
If $E=\mathbb R$, we have the following closed form of the Wasserstein distance (see e.g. Theorem 3.1.2 in \cite{Rachev1998}):
\begin{equation}\label{Closed form of the Wasserstein distance in one dimension}
    \mathbb{W}_p^p(X,Y)=\int_{[0,1]}|F_X^{[-1]}(u)-F_Y^{[-1]}(u)|^p du.
\end{equation}
\begin{Theorem}\label{T: Wasserstein-Copula Best Approximation}
Let $X,Y$ be random variables in $l^p$ (in $\mathcal{L}^p(T)$ respectively) for some $p\in\mathbb N$. Then the following are equivalent:
\begin{itemize}
    \item[(i)] $X$ and $Y$ share the same underlying basis copula (path copula respectively) $C$;
    \item[(ii)]$(F_X^{[-1]}(U),F_Y^{[-1]}(U))$ is an optimal coupling of $X$ and $Y$, where $U\sim C$;
    \item[(iii)] The Wasserstein distance between $X$ and $Y$ is given by
    \begin{align*}
         \mathbb{W}_p^p(X,Y)=\sum_{n\in\mathbb N} \mathbb{W}_p^p(X_n,Y_n)\\
         (\text{respectively } \mathbb{W}_p^p(X,Y)=\int_T\mathbb{W}_p^p(X_t,Y_t)\mu(dt)).
    \end{align*}
\end{itemize}
In particular, if one of the above holds we have
    \begin{align*}
    \sum_{n\in\mathbb N} \mathbb{W}_p^p(X_n,Y_n)=\|F_X^{[-1]}(U)-F_Y^{[-1]}(U)\|_{L^p(\Omega;l^p)}^p\\
    (\text{respectively } \int_T\mathbb{W}_p^p(X_t,Y_t)\mu(dt)=\|F_X^{[-1]}(U)-F_Y^{[-1]}(U)\|_{L^p(\Omega;L^p(T))}^p).
      \end{align*}
\end{Theorem}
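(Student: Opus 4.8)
The plan is to prove the cycle of implications (i) $\Rightarrow$ (ii) $\Rightarrow$ (iii) $\Rightarrow$ (i), using as the central engine the theory of optimal couplings for stochastic processes from \cite{CuestaAlbertos1993} together with Lemma \ref{Lem: Same copulas means similarly ordered} and the closed form \eqref{Closed form of the Wasserstein distance in one dimension}. I will do the argument for $l^p$; the $\mathcal{L}^p(T)$ case is verbatim the same with $\sum_{n\in\mathbb N}$ replaced by $\int_T\,\mu(dt)$ and the counting measure replaced by $\mu$, invoking Fubini exactly as in Lemma \ref{L:Processes that are p integrable}(b).

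First, for (i) $\Rightarrow$ (ii): if $X$ and $Y$ share the underlying basis copula $C$, pick $U\sim C$ and set $\widetilde X := F_X^{[-1]}(U)$, $\widetilde Y := F_Y^{[-1]}(U)$. By the construction in the proof of Sklar's Theorem \ref{T: Sklar in infinite dimensions} (equation \eqref{Explicit form of seond part Sklar}) and its reuse in Lemma \ref{Lem: Same copulas means similarly ordered}, $\widetilde X\sim X$ and $\widetilde Y\sim Y$, so $(\widetilde X,\widetilde Y)$ is a coupling of the two laws. To see it is optimal, note that for \emph{any} coupling $(X',Y')$ of $X$ and $Y$ with $X',Y'\in L^p(\Omega;l^p)$ we have, interchanging expectation and the (countable, nonnegative) sum,
\begin{equation*}
\mathbb{E}\,\|X'-Y'\|_p^p=\sum_{n\in\mathbb N}\mathbb{E}\,|X'_n-Y'_n|^p\ \ge\ \sum_{n\in\mathbb N}\mathbb{W}_p^p(X_n,Y_n),
\end{equation*}
since each $(X'_n,Y'_n)$ is a coupling of the one-dimensional marginals $X_n$ and $Y_n$. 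On the other hand, since $F_X^{[-1]}$ and $F_Y^{[-1]}$ are nondecreasing, $(\widetilde X_n,\widetilde Y_n)=(F_{X_n}^{[-1]}(U_n),F_{Y_n}^{[-1]}(U_n))$ is the monotone (comonotone) coupling of $X_n$ and $Y_n$, which is optimal in one dimension, so by \eqref{Closed form of the Wasserstein distance in one dimension} $\mathbb{E}\,|\widetilde X_n-\widetilde Y_n|^p=\mathbb{W}_p^p(X_n,Y_n)$. Summing over $n$ shows $\mathbb{E}\,\|\widetilde X-\widetilde Y\|_p^p=\sum_n\mathbb{W}_p^p(X_n,Y_n)$, which matches the lower bound; hence $(\widetilde X,\widetilde Y)$ is optimal and simultaneously this identity is (iii). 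So (i) $\Rightarrow$ (ii) and (i) $\Rightarrow$ (iii) come out together, and the displayed ``in particular'' equality is exactly $\mathbb{E}\,\|\widetilde X-\widetilde Y\|_p^p=\sum_n\mathbb{W}_p^p(X_n,Y_n)$ rewritten with the $L^p(\Omega;l^p)$-norm.

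For the reverse directions I would argue (ii) $\Rightarrow$ (iii) and (iii) $\Rightarrow$ (i). The implication (ii) $\Rightarrow$ (iii) is immediate: if $(F_X^{[-1]}(U),F_Y^{[-1]}(U))$ is optimal then $\mathbb{W}_p^p(X,Y)$ equals its cost, and that cost is $\sum_n\mathbb{W}_p^p(X_n,Y_n)$ by the comonotone-coupling computation just described. For (iii) $\Rightarrow$ (i): suppose $\mathbb{W}_p^p(X,Y)=\sum_n\mathbb{W}_p^p(X_n,Y_n)$. Take an optimal coupling $(X',Y')$ realizing $\mathbb{W}_p(X,Y)$ (it exists because $\mathcal{W}_p(l^p)$ is a complete separable metric space and optimal couplings exist in Polish spaces). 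Then
\begin{equation*}
\sum_{n\in\mathbb N}\mathbb{E}\,|X'_n-Y'_n|^p=\mathbb{W}_p^p(X,Y)=\sum_{n\in\mathbb N}\mathbb{W}_p^p(X_n,Y_n),
\end{equation*}
and since each summand on the left dominates the corresponding summand on the right, equality forces $\mathbb{E}\,|X'_n-Y'_n|^p=\mathbb{W}_p^p(X_n,Y_n)$ for every $n$, i.e.\ each $(X'_n,Y'_n)$ is an optimal one-dimensional coupling. By the one-dimensional optimal transport characterization this means $X'_n$ and $Y'_n$ are comonotone, i.e.\ $X'_n\overset{\text{s.o.}}{\sim}Y'_n$ for all $n$ (using that the pair is comonotone iff the product of increments is a.s.\ nonnegative, cf.\ Proposition 2.1 in \cite{CuestaAlbertos1993}). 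Since $X'\sim X$ and $Y'\sim Y$, Lemma \ref{Lem: Same copulas means similarly ordered} (applied with the basis-marginal embedding \eqref{Basis embedding}, which reduces the $l^p$ situation to the product-space situation of section \ref{sec: Product space copulas}) yields that $X$ and $Y$ have a common underlying basis copula, which is (i).

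The main obstacle I anticipate is purely bookkeeping rather than conceptual: making sure that all the interchanges of $\mathbb{E}$ with the infinite sum (or with $\int_T\mu(dt)$) are legitimate — this is fine because everything in sight is nonnegative and measurable (Tonelli), and the finiteness of $\sum_n\mathbb{E}\,|X_n|^p$, hence of $\mathbb{W}_p^p(X_n,Y_n)$, is guaranteed by $X,Y\in L^p(\Omega;l^p)$ via Corollary \ref{C:lp moment condition}. A second point requiring a little care is the existence of optimal couplings and the fact that the coordinatewise comonotone coupling actually lands in $L^p(\Omega;l^p)$ (so that it is a legitimate competitor); this follows again from $\sum_n\mathbb{W}_p^p(X_n,Y_n)<\infty$ together with $\|F_X^{[-1]}(U)-F_Y^{[-1]}(U)\|_{L^p(\Omega;l^p)}^p=\sum_n\mathbb{W}_p^p(X_n,Y_n)$. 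Finally, one must note that the copula/comonotonicity equivalence through Lemma \ref{Lem: Same copulas means similarly ordered} is stated for product spaces, so the $l^p$ (resp.\ $\mathcal{L}^p(T)$) case is handled by transporting along the embedding \eqref{Basis embedding} (resp.\ the path embedding), which is exactly the reduction already built into section \ref{sec: Copulas in function spaces}.
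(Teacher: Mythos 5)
Your proposal is correct and follows essentially the same route as the paper's proof: the coordinatewise lower bound on $\mathbb{W}_p^p(X,Y)$ obtained by exchanging the infimum with the sum (integral), its attainment via the one-dimensional closed form \eqref{Closed form of the Wasserstein distance in one dimension} for the comonotone coupling $(F_X^{[-1]}(U),F_Y^{[-1]}(U))$, and, for (iii)$\Rightarrow$(i), coordinatewise optimality of an optimal coupling forcing similarly ordered marginals and hence a common copula via Lemma \ref{Lem: Same copulas means similarly ordered}. The only differences are cosmetic: you make the existence of an optimal coupling explicit and cite \cite{CuestaAlbertos1993} where the paper cites Proposition 2.1 of \cite{Ruschendorf2009} for the comonotonicity of one-dimensional optimal couplings.
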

\begin{proof}
Since the proof for the $\mathcal{L}^p(T)$ case is analogous, we will just show the assertion for $l^p$ valued random variables $X$ and $Y$.

Assume $(i)$ holds.
By Corollary \ref{C:lp moment condition} we have that $F_X^{[-1]}(U)$ and $F_X^{[-1]}(U)$ are measurable random variables taking values in $l^p$ for a copula process $U\sim C$. Moreover, they are a coupling, as consequence of Sklar's Theorem \ref{T: Sklar in infinite dimensions}. 
To show optimality, observe first that for $X\sim \nu^1$ and $Y\sim \nu^2$ we have
\begin{align}\label{Wasserstein Lower bound}
\mathbb{W}_p^p(X,Y)= & \inf_{\rho<_{\nu^2}^{\nu^1}}\int_{l^p\times l^p} \| x-y\|_p^p \rho(dx,dy)\notag\\
= & \inf_{\rho<_{\nu^2}^{\nu^1}}\int_{l^p\times l^p} \sum_{i=1}^{\infty} | x_i-y_i|^p \rho(dx,dy)\notag\\
\geq & \sum_{i=1}^{\infty} \inf_{\rho<_{\nu^2}^{\nu^1}}\int_{l^p\times l^p}  | x_i-y_i|^p \rho(dx,dy)\\
= & \sum_{i=1}^{\infty} \inf_{\rho_i<_{\nu^2_i}^{\nu^1_i}}\int_{\mathbb R\times \mathbb R}  | x_i-y_i|^p \rho_i(dx_i,dy_i)\notag\\
= & \sum_{i=1}^{\infty} \mathbb{W}_p^p(\nu^1_i,\nu^2_i)=\sum_{i=1}^{\infty} \mathbb{W}_p^p(X_i,Y_i).\notag
\end{align}
This general lower bound on the Wasserstein distance is actually achieved in our case since, by \eqref{Closed form of the Wasserstein distance in one dimension}, we obtain
\begin{align*}
\sum_{i=1}^{\infty} \mathbb{W}_p^p(X_i,Y_i)
=  &  \sum_{i=1}^{\infty} \int_{[0,1]}  | F_{X_i}^{[-1]}(u_i)-F_{Y_i}^{[-1]}(u_i)|^p du_i\\
 = &   \int_{[0,1]^{\mathbb N}}  \| (F_{X}^{[-1]}(u)-F_{Y}^{[-1]}(u))\|_p^p C(du)\\
 = &   \|F_X^{[-1]}(U)-F_Y^{[-1]}(U)\|_{L^p(\Omega;l^p)}^p\\
 \geq &  \mathbb{W}_p^p(X,Y).
\end{align*}
This shows $(i)\Leftrightarrow(ii)$ and $(i)\Rightarrow (iii)$.
Since $(ii)\Rightarrow (i)$ is trivial, it is therefore sufficient to show $(iii)\Rightarrow (i)$. 
Since equality in \eqref{Wasserstein Lower bound} can just hold, if there is an optimal coupling $(X,Y)$, such that $\mathcal{W}_p^p(X_i,Y_i)=\mathbb E[|X_i-Y_i|^p]$, we have that $(X_i,Y_i)$ must also be an optimal coupling for all $i\in\mathbb N$. By Proposition 2.1 in \cite{Ruschendorf2009} we obtain that for all $i\in \mathbb N$ we have that $X_i\overset{\text{s.o.}}{\sim}Y_i$.
  This implies $(i)$ due to  Lemma \ref{Lem: Same copulas means similarly ordered}.
\end{proof}
\begin{Remark}
Observe that the implications $(ii)\Rightarrow (i)$ and $(iii)\Rightarrow (i)$ in Theorem
\ref{T: Wasserstein-Copula Best Approximation} must be interpreted in the sense that there is always a representative of the equivalence classes that possesses the same path copula.
\end{Remark}
\begin{Remark}
An analogous result to the previous Theorem \ref{T: Wasserstein-Copula Best Approximation} was elaborated in finite dimensions in \cite{Ruschendorf2009} and then transferred to the infinite-dimensional setting via the equivalent formulation for two random variables with similarly ordered marginals (which, by Lemma \ref{Lem: Same copulas means similarly ordered}, is equivalent for them to share the same copula).
 However, since there the proof was not given explicitly and the notion of copulas in infinite dimensions was not used, we provided a proof.

Furthermore, the assertion of Theorem \ref{T: Wasserstein-Copula Best Approximation} does not hold for the $q$-Wasserstein distance over $l^p$ ($\mathcal L^p(T)$ respectively) if $q\neq p$, as it was shown in \cite{alfonsi2014} for the finite-dimensional case.
\end{Remark}
\begin{Remark}
Theorem \ref{T: Wasserstein-Copula Best Approximation} is useful, because the one-dimensional Wasserstein distance has a closed form given by \eqref{Closed form of the Wasserstein distance in one dimension}. This expression can oftentimes be estimated rather well from above 
(see for instance chapter 4.7 in \cite{Panaretos2020} for a discussion of convergence of empirical measures).
\end{Remark}
\begin{Remark}
The copula construction effectively solves the following optimization problem for $E=l^p$ or $E=\mathcal L^p(T)$ respectively:
\begin{align*}
(P)=\begin{cases}
\min_{\nu\in \mathcal W_p(E)} \mathbb{W}_p(\nu^0,\nu)\\
\text{s.t.}\quad \nu\text{ has marginals }(\nu_n)_{n\in \mathbb N}\,\, (\text{respectively }(\nu_t)_{t\in T})
\end{cases}
\end{align*} 
for any family of marginals $(\nu_n)_{n\in \mathbb N}$ $(\text{respectively }(\nu_t)_{t\in T})$, and the optimal value is given by $\nu=((F_{\nu_n}^{[-1]})_{n\in \mathbb N})_*C$ (respectively $\nu=((F_{\nu_t}^{[-1]})_{t\in T})_*C$) for the underlying copula measure $C$ of $\nu^0$. 
\end{Remark}
Moreover, Theorem \ref{T: Wasserstein-Copula Best Approximation} implies the following.
\begin{Corollary}\label{Lem: Copula Conservation law}
Let $X,Y$ be stochastic processes with values in $ \mathcal{L}^2(T)$. If $(e_n)_{n\in\mathbb N}$ is an orthonormal basis in $L^2(T)$, then the following are equivalent:
 \begin{itemize}
     \item[(a)] $\int_T \mathbb{W}_2^2(X_t,Y_t)dt=\sum_{n=1}^{\infty}\mathbb{W}_2^2(\langle X,e_n\rangle_{L^2(T)},\langle Y,e_n\rangle_{L^2(T)})$
     \item[(b)] $X$ and $Y$ have the same basis copula if and only if $X$ and $Y$ have the same path copula.
     \end{itemize}
\end{Corollary}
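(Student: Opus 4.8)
The plan is to view Corollary~\ref{Lem: Copula Conservation law} as a double application of Theorem~\ref{T: Wasserstein-Copula Best Approximation}: once directly to the $\mathcal L^2(T)$-valued processes $X,Y$ with their path marginals, and once to the $l^2$-valued processes obtained by expanding $X,Y$ in the orthonormal basis $(e_n)_{n\in\mathbb N}$, the two pictures being linked by the Parseval isometry. Abbreviate $W:=\mathbb W_2^2(X,Y)$, $P:=\int_T\mathbb W_2^2(X_t,Y_t)\,\mu(dt)$ and $Q:=\sum_{n=1}^{\infty}\mathbb W_2^2(\langle X,e_n\rangle_{L^2(T)},\langle Y,e_n\rangle_{L^2(T)})$; as is implicit in the statement, we assume $X$ and $Y$ have finite second moments, so that $W,P,Q<\infty$ and, by \eqref{Wasserstein Lower bound} together with its $\mathcal L^2(T)$-analogue from the proof of Theorem~\ref{T: Wasserstein-Copula Best Approximation}, $W\ge P$ and $W\ge Q$. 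In this notation (a) is precisely ``$P=Q$'', so it suffices to establish the two equivalences ``$X,Y$ have the same path copula $\iff W=P$'' and ``$X,Y$ have the same basis copula $\iff W=Q$''. The first of these is Theorem~\ref{T: Wasserstein-Copula Best Approximation}, $(i)\Leftrightarrow(iii)$, applied verbatim to $X,Y$ regarded as $\mathcal L^2(T)$-valued; here one invokes the convention of Section~\ref{sec: Robustness of the copula construction} that statements about the path copula of an element of $L^2(T)$ refer to a suitable representative in $\mathcal L^2(T)$.

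For the second equivalence I would pass to $l^2$ via the coefficient map $\Phi:L^2(T)\to l^2$, $\Phi f:=(\langle f,e_n\rangle_{L^2(T)})_{n\in\mathbb N}$, which by Parseval's identity is an isometric isomorphism of separable Banach spaces. Three facts are then needed: (1) $\Phi X,\Phi Y$ still have finite second moments, since $\|\Phi f\|_{l^2}=\|f\|_{L^2(T)}$; (2) $\Phi$ pushes couplings of $(\mathrm{Law}\,X,\mathrm{Law}\,Y)$ bijectively onto couplings of $(\mathrm{Law}\,\Phi X,\mathrm{Law}\,\Phi Y)$ and preserves the quadratic cost, so that $\mathbb W_2(X,Y)=\mathbb W_2(\Phi X,\Phi Y)$; and (3) $\Phi$ intertwines the basis marginals $f\mapsto\langle f,e_n\rangle_{L^2(T)}$ with the coordinate functionals $\delta_n^{*}$ of the standard Schauder basis of $l^2$, whence ``$X,Y$ have the same basis copula'' means exactly ``$\Phi X,\Phi Y$ have the same basis copula in $l^2$''. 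Applying Theorem~\ref{T: Wasserstein-Copula Best Approximation}, $(i)\Leftrightarrow(iii)$, to $\Phi X,\Phi Y\in l^2$ then gives that this holds if and only if $\mathbb W_2^2(\Phi X,\Phi Y)=\sum_n\mathbb W_2^2((\Phi X)_n,(\Phi Y)_n)$, that is, if and only if $W=Q$, as required.

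It remains to assemble the pieces. If (a) holds then $P=Q$, hence $W=P\iff W=Q$, which by the two equivalences says exactly that $X,Y$ have the same path copula if and only if they have the same basis copula, i.e.\ (b). Conversely, if (b) holds then, whenever $X$ and $Y$ share a copula in one of the two senses, by (b) they share it in the other as well, so $W=P$ and $W=Q$ simultaneously and therefore $P=Q$, which is (a). The step I expect to cost the most work is the basis side of the second paragraph: one must verify carefully that the Parseval map is an isometry of $2$-Wasserstein spaces and that it carries the basis marginals on $L^2(T)$ exactly onto the coordinate marginals on $l^2$, so that Theorem~\ref{T: Wasserstein-Copula Best Approximation} — which is stated for $l^p$ equipped with its canonical Schauder basis — may legitimately be invoked; this has to be done while keeping the function space $\mathcal L^2(T)$ (the home of the path copula) and the quotient space $L^2(T)\cong l^2$ (the home of the basis copula) properly distinguished, in line with the conventions set out at the beginning of this subsection.
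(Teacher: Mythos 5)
Your overall route is exactly the one the paper intends: the corollary is stated without proof as an immediate consequence of Theorem~\ref{T: Wasserstein-Copula Best Approximation}, and your two attainment equivalences --- same path copula $\iff$ $\mathbb{W}_2^2(X,Y)=\int_T\mathbb{W}_2^2(X_t,Y_t)\,dt$, by the theorem applied in $\mathcal L^2(T)$, and same basis copula $\iff$ $\mathbb{W}_2^2(X,Y)=\sum_{n}\mathbb{W}_2^2(\langle X,e_n\rangle_{L^2(T)},\langle Y,e_n\rangle_{L^2(T)})$, by transporting everything through the Parseval isometry onto $l^2$ (which indeed preserves couplings and quadratic cost and carries the basis marginals onto the coordinate marginals) --- are correct and are precisely the mechanism behind the statement. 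With them, your proof of (a)$\Rightarrow$(b) is complete.

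The (b)$\Rightarrow$(a) direction, however, has a genuine gap: your argument only covers the case in which $X$ and $Y$ actually share one of the two copulas (your own phrase ``whenever $X$ and $Y$ share a copula in one of the two senses''). If they share neither, then the biconditional in (b) is vacuously true and nothing in your argument forces $P=Q$; and $P=Q$ can indeed fail in that situation. For instance, take $Y$ with the same one-dimensional path marginals as $X$ (so $P=\int_T\mathbb{W}_2^2(X_t,Y_t)\,dt=0$) but with a different path copula, chosen so that some coefficient $\langle Y,e_n\rangle_{L^2(T)}$ has a law different from that of $\langle X,e_n\rangle_{L^2(T)}$ (e.g.\ $X$ Gaussian and $Y$ a non-Gaussian recoupling of the same marginals); then $Q>0=P$, so (a) fails, while (b) holds vacuously. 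Hence (b)$\Rightarrow$(a) cannot be proved as literally stated: it needs (b) to be read non-vacuously, i.e.\ under the standing hypothesis that $X$ and $Y$ share at least one of the two copulas, under which your argument does close. Since the paper offers no proof of the corollary, this is arguably an imprecision of the statement itself rather than of your strategy, but you should make that extra hypothesis explicit rather than leave it buried in the converse step.
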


\subsection{A Robustness Inequality in $\mathcal L^p(T)$}\label{sec: Lp estimate}
In order to derive a distance estimate between random variables in  $\mathcal L^p(T):=\mathcal L^p(T,\mathcal B (T),\mu;\mathbb R)$ 
for a finite Borel measure $\mu$ and $T\subset \mathbb{R}^d$ compact, based on the copula and the marginals separately, we impose a smoothness assumption on marginals of the distribution function.
\begin{Assumption}\label{As: Smoothness and Tail condition}
For all $t$, the marginals $F_t$ are continuously differentiable and strictly increasing on $(F_t^{[-1]}(0+),F_t^{[-1]}(1))$. Moreover, assume that for the corresponding densities $f_t$
there is a measurable 
    \begin{equation*}
         g:T\times \mathbb R\to \mathbb R,\,\, g(t,x)=:g_t(x)
    \end{equation*}
    such that each
    $g_t$  is ultimately monotone (see, e.g. \cite{Bingham1987}), that is, it is monotone on $[m_t-x_0^t,m_t+x_0^t]^c$ for some $x_0^t\in \mathbb{R}_+$, $m_t\in\mathbb R$, with $g_t$ bounded away from $0$ on $ [m_t-x_0^t,m_t+x_0^t]$ by some $\lambda>0$ (independent of $t$) and $f_t(x)\geq g_t(x)>0$ for all $x\in (F_{Y_t}^{[-1]}(0),F_{Y_t}^{[-1]}(1))$ and for all $t\in T$, and there is an $0<\beta\leq 1$ such that
\begin{align}\label{Tail condition}
 \int_T  \int_{\mathbb R}\frac {f_t(x)}{g_t^{\beta}(x)}dxdt <\infty.
 \end{align}
\end{Assumption}
\begin{Remark}
If a density function $(x,t)\mapsto f_t(x)$ is continuous and ultimately monotone, that is, there are $x_0^t>0$, $m_t\in\mathbb R$ such that $f_t$ is monotone on $[m_t-x_0^t,m_t+x_0^t]^c$, the best candidate for the choice of $g$ in Assumption \ref{As: Smoothness and Tail condition} is $f_t$ itself.
\end{Remark}
 Observe that Assumption \ref{As: Smoothness and Tail condition} is satisfied for Gaussian marginals:
\begin{Example}
Assume that $Y=W$ is a zero mean continuous Gaussian process. Clearly, the densities $f_t$ of $W_t$ are ultimately monotone and and we can choose $x_0^t=0$ 
and $g_t=f_t$.
Then Condition \ref{Tail condition} holds, as for all $\beta\in (0,1)$ 
\begin{align*}
\int_T\int_{\mathbb R}f_t^{1-\beta}(x)dxdt = & \int_T\frac{\mathbb E [e^{\beta\frac{W_t^2}{2\sigma_t^2}}]}{(\sqrt{2\pi}\sigma_t)^{-\beta}}dt = & (\sqrt{2\pi})^{\beta}\int_T\sigma_t^{\beta}dt  \mathbb E [e^{\beta\frac{Z^2}{2}}]=\frac {(\sqrt{2\pi})^{\beta}}{\sqrt{1-\beta}} \int_T\sigma_t^{\beta}dt.
\end{align*}
where $Z$ is a standard normally distributed random variable. Thus, since $t\mapsto \sigma_t^{\beta}$ is continuous, it is integrable over $T$ and Assumption \ref{As: Smoothness and Tail condition} holds.
\end{Example}
Another example, for which Assumption \ref{As: Smoothness and Tail condition} holds, is the following class of heavy tailed marginals.
\begin{Example}\label{Ex: Regularly varying marginals}
(Regularly varying marginals)
A measurable  function $h:\mathbb{R}_+\to\mathbb{R}_+$ is regularly varying with tail index $\alpha\in\mathbb R$, if
\begin{equation*}
    \lim_{x\to\infty}\frac{h(tx)}{h(x)}=t^{\alpha}
\end{equation*}
We write $h\in\mathcal R (\alpha)$. If $\alpha=0$, $h$ is called slowly varying. A one-dimensional law given by its cumulative distribution function $F$ is said to have regularly varying tails, if the survival function $\bar{F}:=1-F$ is regularly varying.

Let $Y$ be a c{\`a}dl{\`a}g stochastic process, such that its marginals $(F_t)_{t\in T}$ are continuously differentiable, strictly increasing, supported on $\mathbb R_+$ and regularly varying with tail index $-\alpha_t$ for $\alpha_t >0$ where we assume that $t\mapsto \alpha_t$ is continuous. Moreover, assume that the densities $f_t$ are ultimately monotone on $[0,y_0^t]^c$ for some $y_0^t\in\mathbb{R}_+$ and jointly continuous in $x$ and $t$. 
This enables us to use the monotone density theorem (c.f. Theorem 1.7.2 in \cite{Bingham1987}) to conclude that $f_t\in \mathcal R (-(\alpha_t+1))$. Hence, there are slowly varying functions $l_t:\mathbb{R}_+\to\mathbb{R}_+$ such that \begin{equation*}
    f_t(x)=x^{-(1+\alpha_t)}l_t(x).
\end{equation*}
For convenience, let us assume that $(t,x)\mapsto l_t(x)$ is bounded.
 By choosing $\beta<\min_{t\in T}\frac{\alpha_t}{1+\alpha_t}$ we obtain  
 \begin{equation*}
    l_t^{1-\beta}(x) x^{-(1-\beta)(1+\alpha_t)}\in \mathcal R (- (1-\beta)(1+\alpha_t))
\end{equation*} 
such that $- (1-\beta)(1+\alpha_t)<1$ and by Karamata's theorem (c.f. Proposition 1.5.10 in \cite{Bingham1987})
we can find $x_0^t>y_0^t$ and some $\delta>0$ such that
\begin{align*}
    \int_{x_0^t}^{\infty} \frac{f_t(x)}{f^{\beta}_t(x)}dx= \int_{x_0^t}^{\infty} f_t^{1-\beta}(x)dx = & \int_{x_0^t}^{\infty} l_t^{1-\beta}(x) x^{-(1-\beta)(1+\alpha_t)}dx\notag\\
    \leq & (1+\delta) l_t^{1-\beta}(x_0^t)(x_0^t)^{-(1-\beta)(1+\alpha_t)+1}<\infty.
\end{align*}
Assume moreover that
we can choose $t\mapsto x_0^t$ to be continuous (this is possible for instance if all $l_t$'s are supported on a compact domain) and hence, since $f_t(x)>0$ for all $x\in\mathbb R_+$ we have for
\begin{equation*}
    \lambda:=\min_{t\in T}\min_{ x\in[-x_0^t,x_0^t]}f_t(x)>0
\end{equation*}
 that each $f_t$ is bounded away from $0$ on $ [-x_0^t,x_0^t]^c$ by this $\lambda >0$. Moreover, it holds
 \begin{equation*}
     \int_T \int_{x_0^t}^{\infty}\frac{f_t(x)}{f^{\beta}_t(x)}dxdt\leq \int_T (1+\delta) l_t^{1-\beta}(x_0^t)(x_0^t)^{-(1-\beta)(1+\alpha_t)+1} dt<\infty
 \end{equation*}
by the continuity of $t\mapsto x_0^t$.
Thus, \eqref{Tail condition} in Assumption \ref{As: Smoothness and Tail condition} is valid with 
\begin{align}\label{Reg.var. Marginals minorant}
   g_t(x):=\begin{cases} 0 & x<0\\
   \lambda & 0\leq x<x_0^t\\
f_t(x) & x\geq x_0^t.
\end{cases} 
\end{align}
\end{Example}
The next Theorem gives an idea about the robustness of the copula construction.
\begin{Theorem}\label{Thm:Robustness inequalty}
Let $X=(F_{X_t}^{[-1]}(U_t^X))_{t\in T}$ and $Y=(F_{Y_t}^{[-1]}(U_t^Y))_{t\in T}$ be c{\`a}dl{\`a}g stochastic processes, such that $[X]\in L^p(\Omega\times T)$ for some $p\geq 1$, $[Y]\in L^{p+\epsilon}(\Omega\times T)$ for some $\epsilon>0$ and let the marginals $F_Y$ of $Y$ satisfy Assumption \ref{As: Smoothness and Tail condition}.
Then for all $q\geq 1$ there are constants $K:=K(\beta,q,p,\epsilon,F_Y)$ and $\rho:=\rho(\beta,q,p,\epsilon)$
 such that 
     \begin{align}\label{Copula robustness estimate}
        \|X-Y\|_{L^p(\Omega\times T)}^p\leq \|\mathbb{W}_p(F_{X_{\cdot}},F_{Y_{\cdot}})\|_{L^p(T)}+  K \|U^X-U^Y\|_{L^q(\Omega\times T)}^{\rho}.
     \end{align}
     The constants are given by 
\begin{equation}
    \rho:= \frac{\epsilon q\beta}{p(p+\epsilon)(q+\beta)- pq\beta}
\end{equation}
 and
 \begin{equation}\label{Constant K}
  K:= \left(\lambda^{-\beta}\int_T\1_{(0,\infty)}(x_0^t)dt+2\|  g_t^{-\beta}(Y_t)\|_{L^1(\Omega\times T)}\right)^{\frac{\rho }{\beta}} (2 \|Y\|_{L^{p+\epsilon}(\Omega\times T)})^{(1-\rho)
  }
\end{equation}
\end{Theorem}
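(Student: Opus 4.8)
The plan is to split $X-Y$ into a term that couples $X$ and $Y$ through the \emph{same} copula process (so it only sees the difference of the one‑dimensional marginals) and a term with a common marginal family (so it only sees the difference of the copulas), and to control the second term by an interpolation between the $L^q$‑distance of the copulas and the $(p+\epsilon)$‑th moments of $Y$. Concretely, put $\tilde Y_t:=F_{Y_t}^{[-1]}(U_t^X)$ and write $X_t-Y_t=(X_t-\tilde Y_t)+(\tilde Y_t-Y_t)$, where $X_t-\tilde Y_t=F_{X_t}^{[-1]}(U_t^X)-F_{Y_t}^{[-1]}(U_t^X)$ and $\tilde Y_t-Y_t=F_{Y_t}^{[-1]}(U_t^X)-F_{Y_t}^{[-1]}(U_t^Y)$. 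Since $U_t^X$ is uniform on $[0,1]$, the closed form \eqref{Closed form of the Wasserstein distance in one dimension} gives $\mathbb{E}|X_t-\tilde Y_t|^p=\int_0^1|F_{X_t}^{[-1]}(w)-F_{Y_t}^{[-1]}(w)|^p\,dw=\mathbb{W}_p^p(X_t,Y_t)$, and Fubini (the processes are jointly measurable) yields $\|X-\tilde Y\|_{L^p(\Omega\times T)}^p=\int_T\mathbb{W}_p^p(X_t,Y_t)\,\mu(dt)=\|\mathbb{W}_p(F_{X_\cdot},F_{Y_\cdot})\|_{L^p(T)}^p$. By the triangle inequality in $L^p(\Omega\times T)$ it therefore remains to show $\|\tilde Y-Y\|_{L^p(\Omega\times T)}\le K\|U^X-U^Y\|_{L^q(\Omega\times T)}^{\rho}$.

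The key estimate is a modulus‑of‑continuity bound for the quantile functions $F_{Y_t}^{[-1]}$. By Assumption \ref{As: Smoothness and Tail condition}, $F_t\in C^1$ is strictly increasing on $(F_t^{[-1]}(0+),F_t^{[-1]}(1))$, so $F_{Y_t}^{[-1]}$ is absolutely continuous there with $(F_{Y_t}^{[-1]})'(w)=1/f_t(F_{Y_t}^{[-1]}(w))\le 1/g_t(F_{Y_t}^{[-1]}(w))=:\tilde h_t(w)$. Since $g_t$ is positive and integrable (as $g_t\le f_t$), ultimately monotone, and $\ge\lambda$ on $[m_t-x_0^t,m_t+x_0^t]$, it must be non‑decreasing to the left of that window and non‑increasing to its right; composing with the increasing map $F_{Y_t}^{[-1]}$ shows that on $(0,1)$ the function $\tilde h_t$ is non‑increasing, then $\le\lambda^{-1}$, then non‑decreasing. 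Hence $\sup_{w\in[u\wedge v,\,u\vee v]}\tilde h_t(w)\le\tilde h_t(u)+\tilde h_t(v)+\lambda^{-1}\1_{(0,\infty)}(x_0^t)$ for all $u,v\in(0,1)$, so, evaluating at $u=U_t^X$, $v=U_t^Y$,
\[
|\tilde Y_t-Y_t|\le|U_t^X-U_t^Y|\,\Psi_t,\qquad \Psi_t:=\tilde h_t(U_t^X)+\tilde h_t(U_t^Y)+\lambda^{-1}\1_{(0,\infty)}(x_0^t).
\]
The substitution $x=F_{Y_t}^{[-1]}(w)$ identifies $\mathbb{E}[\tilde h_t(U_t^X)^\beta]=\int_0^1\tilde h_t(w)^\beta\,dw$ with $\int_{\mathbb{R}}f_t(x)g_t(x)^{-\beta}\,dx=\mathbb{E}[g_t(Y_t)^{-\beta}]$; since $\beta\le1$, subadditivity of $s\mapsto s^\beta$ and Fubini then give $\mathbb{E}\int_T\Psi_t^\beta\,\mu(dt)\le\Phi$, where $\Phi:=\lambda^{-\beta}\int_T\1_{(0,\infty)}(x_0^t)\,dt+2\|g_t^{-\beta}(Y_t)\|_{L^1(\Omega\times T)}$ is finite by \eqref{Tail condition}.

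The last step is the interpolation: $|\tilde Y_t-Y_t|^p=|\tilde Y_t-Y_t|^{p\rho}|\tilde Y_t-Y_t|^{p(1-\rho)}\le(|U_t^X-U_t^Y|\,\Psi_t)^{p\rho}(|Y_t|+|\tilde Y_t|)^{p(1-\rho)}$. Integrating over $\Omega\times T$ and applying H\"older's inequality to the product of the factors $|U_t^X-U_t^Y|^{p\rho}$, $\Psi_t^{p\rho}$ and $(|Y_t|+|\tilde Y_t|)^{p(1-\rho)}$ with exponents $\tfrac{q}{p\rho},\tfrac{\beta}{p\rho},\tfrac{p+\epsilon}{p(1-\rho)}$ — whose reciprocals sum to $1$ exactly for the value of $\rho$ in the statement, and which are admissible since $\rho\in(0,1)$ and $p\rho\le\min(q,\beta)$ (using $\beta\le1\le p$) — and using $\mathbb{E}\int_T\Psi_t^\beta\,\mu(dt)\le\Phi$ together with $\||Y_\cdot|+|\tilde Y_\cdot|\|_{L^{p+\epsilon}(\Omega\times T)}\le2\|Y\|_{L^{p+\epsilon}(\Omega\times T)}$ (as $\tilde Y_t$ has the law of $Y_t$), one obtains $\mathbb{E}\int_T|\tilde Y_t-Y_t|^p\,\mu(dt)\le\|U^X-U^Y\|_{L^q(\Omega\times T)}^{p\rho}\,\Phi^{p\rho/\beta}\,(2\|Y\|_{L^{p+\epsilon}(\Omega\times T)})^{p(1-\rho)}$. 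Taking $p$‑th roots produces precisely $K\|U^X-U^Y\|_{L^q(\Omega\times T)}^{\rho}$ with $K$ as in \eqref{Constant K}, and combining with the first paragraph finishes the proof.

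The step I expect to be the real work is the quantile modulus‑of‑continuity bound in the second paragraph: extracting from the somewhat implicit hypotheses on $g_t$ (ultimate monotonicity, the uniform lower bound $\lambda$ on the central window, $f_t\ge g_t$) the clean pointwise inequality for $|F_{Y_t}^{[-1]}(u)-F_{Y_t}^{[-1]}(v)|$ and the integrability $\mathbb{E}\int_T\Psi_t^\beta\,\mu(dt)<\infty$ via the change of variables; once these are in place the interpolation is routine, the only care being to align the three H\"older exponents with the prescribed $\rho$ and to track the constants into the form \eqref{Constant K}. A preliminary point to dispatch is the joint measurability of $(t,\omega)\mapsto F_{Y_t}^{[-1]}(U_t^X(\omega))$, needed for the Fubini steps and so that $\tilde Y$ belongs to $L^{p+\epsilon}(\Omega\times T)$ with the same norm as $Y$; this follows from the standing path‑regularity hypotheses on $X,Y$ and the regularity of the marginals in Assumption \ref{As: Smoothness and Tail condition}.
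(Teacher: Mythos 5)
Your proposal is correct and follows essentially the same route as the paper's proof: the same decomposition via $\tilde Y=F_{Y}^{[-1]}(U^X)$, identification of the first term with $\|\mathbb{W}_p(F_{X_\cdot},F_{Y_\cdot})\|_{L^p(T)}$, a quantile-derivative bound through $g_t$ with the window/tail dichotomy producing the same constant $\lambda^{-\beta}\int_T\1_{(0,\infty)}(x_0^t)dt+2\|g_t^{-\beta}(Y_t)\|_{L^1(\Omega\times T)}$, and a H\"older interpolation giving the same $\rho$ and $K$. Your packaging (one three-factor H\"older instead of two successive two-factor applications, and the pointwise bound $\tilde h_t(U^X_t)+\tilde h_t(U^Y_t)+\lambda^{-1}\1_{(0,\infty)}(x_0^t)$ in place of the paper's case analysis on the argmax $\zeta^*$) is only a cosmetic variant, and it reproduces the paper's conclusion $\|X-Y\|_{L^p(\Omega\times T)}\leq\|\mathbb{W}_p(F_{X_\cdot},F_{Y_\cdot})\|_{L^p(T)}+K\|U^X-U^Y\|_{L^q(\Omega\times T)}^{\rho}$, which is what the paper's own argument actually yields.
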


\begin{proof}
By the triangle inequality we have
\begin{equation}\label{Disentangeling Theorem: Equation 1}
    \mathbb E \left[\|X-Y\|_{L^p(T)}^p\right]^{\frac 1p}\leq \mathbb E\left[\|X-F_Y^{[-1]}(U_X)\|_{L^p(T)}^p\right]^{\frac 1p}+E\left[\|F_Y^{[-1]}(U_X)-Y\|_{L^p(T)}^p\right]^{\frac 1p}.
\end{equation}
From Theorem \ref{T: Wasserstein-Copula Best Approximation} we know that $(X,F_Y^{[-1]}(U_X))$ is an optimal coupling and
\begin{equation}\label{Disentangeling Theorem: Equation 2}
   \mathbb E\left[\|X-F_Y^{[-1]}(U_X)\|_{L^p(T)}^p\right]^{\frac 1p}=\|\mathbb{W}_p(F_{X_{\cdot}},F_{Y_{\cdot}})\|_{L^p(T)}.
\end{equation}
Let us now estimate the second summand. 
Set $\delta:= 1+\frac{p(q+\beta)-q\beta}{(q+\beta)\epsilon} > 1 $, such that $\frac{\delta-\frac {q\beta}{(q+\beta)p}}{\delta-1}=1+\frac{\epsilon}{p}$. Then we can estimate for $\gamma =\frac {\delta}{\delta-1}$ using Hölder's inequality 
\begin{align}\label{Disentangeling Theorem: Equation 3}
   & \mathbb E\left[\|F_Y^{[-1]}(U_X)-Y\|_{L^p(T)}^p\right]\notag\\
   = & \int_T \mathbb E \left[|F_{Y_t}^{[-1]}(U^X_t)-Y_t|^p\right] dt\notag\\
    = & \int_T \mathbb E \left[|F_{Y_t}^{[-1]}1(U^X_t)-Y_t|^{\frac{q\beta}{(q+\beta)\delta}} |F_{Y_t}^{[-1]}(U^X_t)-Y_t|^{p-\frac{q\beta}{(q+\beta)\delta}}\right] dt\notag\\
     \leq & (\int_T \mathbb E \left[|F_{Y_t}^{[-1]}(U^X_t)-Y_t|^{\frac{q\beta}{(q+\beta)}} \right]dt)^{\frac 1{\delta}}(\int_T \mathbb E\left[|F_{Y_t}^{[-1]}(U^X_t)-Y_t|^{\gamma(p-\frac{q\beta}{(q+\beta)\delta})}\right] dt)^{\frac 1{\gamma}}.
     \end{align}
     
    Now observe that since $Y_t$ and $F_{Y_t}^{[-1]}(U_t^X)$ share the same distribution and by the elementary inequality $|x+y|^r\leq 2^{r-1}(|x|^r+|y|^r)$ for $r\geq 1$ we have
    \begin{align}\label{Disentangeling Theorem: Equation 4}
        \int_T \mathbb E \left[|F_{Y_t}(U^X_t)-Y_t|^{\gamma(p-\frac{q\beta}{(q+\beta)\delta})}\right] dt=&  \int_T \mathbb E \left[|F_{Y_t}(U^X_t)-Y_t|^{p\frac{\delta-\frac{q\beta}{(q+\beta)p}}{\delta-1}}\right] dt\notag\\
       = &  \int_T \mathbb E \left[|F_{Y_t}(U^X_t)-Y_t|^{p+\epsilon}\right] dt\notag\\
        \leq& 2^{p+\epsilon-1}\int_T \mathbb E\left[|F_{Y_t}(U^X_t)|^{p+\epsilon}+|Y_t|^{p+\epsilon}\right] dt\notag\\
        = & 2^{p+\epsilon}\int_T \mathbb E\left[|Y_t|^{p+\epsilon}\right] dt\notag\\
        = & (2 \|Y\|_{L^{p+\epsilon}(\Omega\times T)})^{p
    +\epsilon}
    \end{align}
This shows that $ \int_T \mathbb E [|F_{Y_t}(U^X_t)-Y_t|^{\gamma(p-\frac{q\beta}{(q+\beta)\delta})}] dt<\infty$, 
since $Y$ is assumed to have finite moments up to $p+\epsilon$.

Now observe that 
\begin{align}\label{Common extreme values are nullset}
    \mathbb P\left[(U_t^X,U_t^Y)\in (0,1)\right]= & 1-\mathbb P\left[U_t^X\in \lbrace 0,1\rbrace\text{ or }U_t^Y\in \lbrace 0,1\rbrace\right]\notag\\
  \geq  & 1-(\mathbb P\left[U_t^X\in \lbrace 0,1\rbrace\right]+\mathbb P\left[U_t^Y\in \lbrace 0,1\rbrace\right])
    =  1-0 = 1
\end{align}
Moreover, by Assumption \ref{As: Smoothness and Tail condition} we further have $
   0< f_{Y_t}(F_{Y_t}^{[-1]}(\zeta))$ for $\zeta \in (0,1)$.
Hence, since by \eqref{Common extreme values are nullset} $[\min(U_t^X,U_t^Y),\max(U_t^X,U_t^Y)]\subset (0,1)$ almost surely, we obtain by the inverse function theorem for $\zeta \in[\min(U_t^X,U_t^Y),\max(U_t^X,U_t^Y)]$
\begin{equation*}
   \frac d{dx} F_{Y_t}^{[-1]}(\zeta))=\left(f_{Y_t}^{-1}(F_{Y_t}(\zeta))\right)^{-1}.
\end{equation*}
Appealing to the mean value theorem and once more Hölder's inequality ($\|fg\|_{L^1(T)}\leq \|f\|_{L^{\frac{r}{r-1}}(T)}\|g\|_{L^r(T)}$ with $r=\frac {(q+\beta)}{\beta}$)  we obtain
\begin{align}\label{Disentangeling Theorem: Equation 5}
    &\int_T \mathbb E \left[|F_{Y_t}^{[-1]}(U^X_t)-Y_t|^{\frac{q\beta}{(q+\beta)}} \right]dt\notag\\
    \leq &  \int_T \mathbb E \left[\left(\sup_{\zeta\in\left[\min(U_t^X,U_t^Y),\max(U_t^X,U_t^Y)\right]}\left(f_{Y_t}\left(F_{Y_t}^{[-1]}(\zeta)\right)\right)^{-1}|U^X_t-U^Y_t|\right)^{\frac{q\beta}{(q+\beta)}} \right]dt\notag\\
     \leq &  \left(\int_T \mathbb E \left[\sup_{\zeta\in[\min(U_t^X,U_t^Y),\max(U_t^X,U_t^Y)]}\left(f_{Y_t}\left(F_{Y_t}^{[-1]}(\zeta)\right)\right)^{-\beta}\right]dt\right)^{\frac q{\beta+q}}\notag\\
     & \qquad \times\left(\int_T\mathbb E \left[|U^X_t-U_t^Y|^q \right]dt\right)^{\frac{\beta}{\beta+q}}.
\end{align}
     We now show that the first factor is finite.
     Denote the random variables
     \begin{align*}
         Z:=\max_{\zeta\in[\min(U_t^X,U_t^Y),\max(U_t^X,U_t^Y)]}\left(f_{Y_t}\left(F_{Y_t}^{[-1]}(\zeta)\right)\right)^{-\beta}\\
         \zeta^*=\argmax_{\zeta\in[\min(U_t^X,U_t^Y),\max(U_t^X,U_t^Y)]}\left(f_{Y_t}\left(F_{Y_t}^{[-1]}(\zeta)\right)\right)^{-\beta}
     \end{align*}
    and choose $x_0^t$ according to Assumption \ref{As: Smoothness and Tail condition} such that $g_t$ is ultimately monotone on $[-x_0^t,x_0^t]^c$, where without loss of generality $m_t=0$.
     We can argue by continuity and monotonicity of cumulative distribution and quantile functions as well as Assumption \ref{As: Smoothness and Tail condition} that
     \begin{align}\label{Estimating the inner part of the quantile slope}
         \mathbb E \left[\1_{\zeta^*\in (F_{Y_t}(-x_0^t),F_{Y_t}(x_0^t))} Z\right] \leq &  \sup_{\zeta\in (F_{Y_t}(-x_0^t),F_{Y_t}(x_0^t)) }\left(f_{Y_t}\left(F_{Y_t}^{[-1]}(\zeta)\right)\right)^{-\beta}\notag\\
         \leq &  \sup_{\zeta\in(F_{Y_t}(-x_0^t),F_{Y_t}(x_0^t)) }\left(g_t\left(F_{Y_t}^{[-1]}(\zeta)\right)\right)^{-\beta}\notag\\
          \leq & \1_{(0,\infty)}(x_0^t) \lambda^{-\beta}
     \end{align}
        Without loss of generality we can assume $g$ to be symmetric in the tails, that is $g(x)=g(-x)$ for $x\geq x_0$. For $\zeta^*\notin [F_{Y_t}(-x_0^t),F_{Y_t}(x_0^t)]$ we have by definition 
        \begin{equation*}
        \left[\min(U_t^X,U_t^Y),\max(U_t^X,U_t^Y)\right]\not\subset \left[F_{Y_t}(-x_0^t),F_{Y_t}(x_0^t)\right]
        \end{equation*}
        and thus, we must have either $U_t^X\in [F_{Y_t}(-x_0^t),F_{Y_t}(x_0^t)]^c$ or $U_t^Y\in [F_{Y_t}(-x_0^t),F_{Y_t}(x_0^t)]^c$. Hence, by Assumption \ref{As: Smoothness and Tail condition} as well as the monotonicity and the symmetry of $g$, we have
        \begin{align}\label{Final Monotone Assumption for slope of marginals}
        \1_{\zeta^*\notin [F_{Y_t}(-x_0^t),F_{Y_t}(x_0^t)]} Z\leq &  \1_{\zeta^*\notin [F_{Y_t}(-x_0^t),F_{Y_t}(x_0^t)]}\max_{\zeta\in [\min(U_t^X,U_t^Y),\max(U_t^X,U_t^Y)]}\left(g_t\left(F_{Y_t}^{[-1]}(\zeta)\right)\right)^{-\beta}\notag\\
        \leq & \max\left(\left(g\left(F_{Y_t}^{[-1]}\left(U_t^Y\right)\right)\right)^{-\beta},\left(g\left(F_{Y_t}^{[-1]}\left(U_t^X\right)\right)\right)^{-\beta}\right)
        \end{align}
      Therefore, for any uniformly distributed $U$ on $[0,1]$ we obtain
     \begin{align}\label{Estimating the tail part of the quantile slope}
            \mathbb E \left[\1_{\zeta^*\notin \left[F_{Y_t}(-x_0^t),F_{Y_t}(x_0^t)\right]} Z\right]
          \leq &  \mathbb E \left[  \max\left(\left(g_t\left(F_{Y_t}^{[-1]}\left(U_t^X\right)\right)\right)^{-\beta},\left(g_t\left(F_{Y_t}^{[-1]}\left(U_t^Y\right)\right)\right)^{-\beta}\right)\right]\notag\\
          \leq & 2\mathbb E \left[  \left(g_t\left(F_{Y_t}^{[-1]}(U)\right)\right)^{-\beta}\right]\notag\\
          = & 2\mathbb E \left[  \left(g_t(Y_t)\right)^{-\beta}\right].
     \end{align}
     Thus, \eqref{Estimating the inner part of the quantile slope} and \eqref{Estimating the tail part of the quantile slope} imply
     \begin{align}\label{Global slope estimate of quantile function}
       &  \left(\int_T \mathbb E \left[\sup_{\zeta\in[\min(U_t^X,U_t^Y),\max(U_t^X,U_t^Y)]}\left(f_{Y_t}\left(F_{Y_t}^{[-1]}(\zeta)\right)\right)^{-\beta}\right]dt\right)^{\frac q{\beta+q}}\notag\\
         \leq & \left( \lambda^{-\beta}\int_T\1_{(0,\infty)}(x_0^t)dt  +2\int_T\mathbb E \left[  \left(g_t(Y_t)\right)^{-\beta}\right]dt\right)^{\frac q{\beta+q}}
     \end{align}
     Combining \eqref{Disentangeling Theorem: Equation 3}, \eqref{Disentangeling Theorem: Equation 4}, \eqref{Disentangeling Theorem: Equation 5} and \eqref{Global slope estimate of quantile function} we obtain 
     \begin{align*}\label{Disentangeling Theorem: Equation 7}
         & \mathbb E\left[\|F_Y^{[-1]}(U_X)-Y\|_{L^p(T)}^p\right] \notag\\
         \leq &\left(\lambda^{-\beta}\int_T\1_{(0,\infty)}(x_0^t)dt+2\| \left(g_t(Y_t)\right)^{-\beta}\|_{L^1(\Omega\times T)}\right)^{\frac q{\delta\beta+\delta q}} \left(2 \|Y\|_{L^{p+\epsilon}(\Omega\times T)}\right)^{\frac {p
    +\epsilon}{\gamma}}\\
    & \qquad \times\|U^X-U^Y\|_{L^q(\Omega\times T)}^{\frac{q\beta}{\delta\beta+\delta q}}\notag\\
     = & K^p\|U^X-U^Y\|_{L^q(\Omega\times T)}^{\frac{\epsilon q\beta}{(p+\epsilon)(q+\beta)- q\beta}}.
     \end{align*}
The proof is complete. 
\end{proof}

\begin{Remark}
Although the marginals of $Y$ must fulfill Assumption \ref{As: Smoothness and Tail condition}, the marginals of $X$ can be chosen more freely and neither have to be absolutely continuous, nor must satisfy a tail condition. For instance, this allows to approximate a smooth law of $Y$ with discrete marginal measures (e.g. empirical measures).
\end{Remark}

\begin{Remark}
The parameters $q,p,\beta$ and $\epsilon$, used in 
equation \eqref{Copula robustness estimate} are competing in the following way: Choosing a lower $p$, but larger $\epsilon$ makes a potential convergence rate better, since it makes the exponent $\rho$ 
decrease (However, this also lets the constant $K$ grow larger). For the same reason we might wish to choose the largest value possible for $\beta$. 
The parameter $q$ can be chosen in order to derive a good approximation rate for the copula processes (for instance via the next Theorem \ref{Thm: Difference of copulas that stem from other processes}).
\end{Remark}
The next Theorem is useful if the copula processes stem from other processes, like for instance Gaussian or elliptical copulas.
\begin{Theorem}\label{Thm: Difference of copulas that stem from other processes}
Let $F_{\tilde Y},F_{\tilde X}$ be marginals with finite $q$th moment for $q\geq 1$ and define $\tilde X:= F_{\tilde X}^{[-1]}(U^X)$ and $\tilde Y:= F_{\tilde Y}^{[-1]}(U^Y)$.
Asume $F_{\tilde{Y}_t}$ is absolutely continuous, strictly increasing, and the corresponding density function is bounded, that is
\begin{equation*}
    \|f_{\tilde{Y}} \|_{\infty}:= \sup_{t\in T,x\in \mathbb R} |f_{\tilde{Y}_t}(x)|<\infty.
\end{equation*}
Then 
\begin{align*}
    \|U^X-U^Y\|_{L^q(T\times \Omega)}\leq & \|f_{\tilde{Y}}\|_{\infty} \left(\|\tilde X-\tilde Y\|_{L^q(T\times\Omega)}+\|\mathbb{W}_q^q(F_{\tilde{X}_{\cdot}},F_{\tilde{Y}_{\cdot}})\|_{L^1(T)}^{\frac 1q}\right).
    \end{align*}
    In particular,
    \begin{align*}
    \|U^X-U^Y\|_{L^q(T\times \Omega)}\leq  &2 \|f_{\tilde{Y}}\|_{\infty} \|\tilde X-\tilde Y\|_{L^q(T\times\Omega)}.
\end{align*}
\end{Theorem}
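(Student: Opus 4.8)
The plan is to compare the two copula processes by running everything through the quantile transform of $Y$ and using that $F_{\tilde{Y}_t}$ is, under the stated hypotheses, both a genuine left inverse of $F_{\tilde{Y}_t}^{[-1]}$ and globally Lipschitz. The starting observation is that a continuous, strictly increasing distribution function $F$ satisfies $F\bigl(F^{[-1]}(u)\bigr)=u$ for every $u\in(0,1)$; since $U^X_t$ and $U^Y_t$ are uniformly distributed, they lie in $(0,1)$ almost surely, so $U^X_t=F_{\tilde{Y}_t}\bigl(F_{\tilde{Y}_t}^{[-1]}(U^X_t)\bigr)$ and $U^Y_t=F_{\tilde{Y}_t}(\tilde{Y}_t)$ hold $\mathbb{P}$-a.s. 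Crucially, $F_{\tilde{Y}_t}^{[-1]}(U^X_t)$ and $\tilde{X}_t=F_{\tilde{X}_t}^{[-1]}(U^X_t)$ are built from the \emph{same} uniform variable. Hence the first step is to insert $F_{\tilde{Y}_t}(\tilde{X}_t)$ and write, pointwise in $t$ and a.s.,
\[
|U^X_t-U^Y_t|\le \bigl|F_{\tilde{Y}_t}\bigl(F_{\tilde{Y}_t}^{[-1]}(U^X_t)\bigr)-F_{\tilde{Y}_t}(\tilde{X}_t)\bigr|+\bigl|F_{\tilde{Y}_t}(\tilde{X}_t)-F_{\tilde{Y}_t}(\tilde{Y}_t)\bigr|.
\]

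Next I would use that absolute continuity of $F_{\tilde{Y}_t}$ with density bounded by $\|f_{\tilde{Y}}\|_{\infty}$ makes each $F_{\tilde{Y}_t}$ Lipschitz with constant $\|f_{\tilde{Y}}\|_{\infty}$, uniformly in $t$ (because $F_{\tilde{Y}_t}(b)-F_{\tilde{Y}_t}(a)=\int_a^b f_{\tilde{Y}_t}\le \|f_{\tilde{Y}}\|_{\infty}(b-a)$). Applying this to both terms on the right, and recalling $\tilde{X}_t=F_{\tilde{X}_t}^{[-1]}(U^X_t)$, gives
\[
|U^X_t-U^Y_t|\le \|f_{\tilde{Y}}\|_{\infty}\Bigl(\bigl|F_{\tilde{Y}_t}^{[-1]}(U^X_t)-F_{\tilde{X}_t}^{[-1]}(U^X_t)\bigr|+|\tilde{X}_t-\tilde{Y}_t|\Bigr).
\]
Taking $L^q(T\times\Omega)$-norms ($q\ge 1$, so Minkowski applies) then yields
\[
\|U^X-U^Y\|_{L^q(T\times\Omega)}\le \|f_{\tilde{Y}}\|_{\infty}\Bigl(\bigl\|F_{\tilde{Y}_{\cdot}}^{[-1]}(U^X_{\cdot})-F_{\tilde{X}_{\cdot}}^{[-1]}(U^X_{\cdot})\bigr\|_{L^q(T\times\Omega)}+\|\tilde{X}-\tilde{Y}\|_{L^q(T\times\Omega)}\Bigr).
\]

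The third step is to identify the first norm on the right. Since $U^X_t$ is uniform on $[0,1]$, Fubini's theorem together with the one-dimensional closed form \eqref{Closed form of the Wasserstein distance in one dimension} gives
\[
\bigl\|F_{\tilde{Y}_{\cdot}}^{[-1]}(U^X_{\cdot})-F_{\tilde{X}_{\cdot}}^{[-1]}(U^X_{\cdot})\bigr\|_{L^q(T\times\Omega)}^q=\int_T\int_{[0,1]}|F_{\tilde{Y}_t}^{[-1]}(u)-F_{\tilde{X}_t}^{[-1]}(u)|^q\,du\,dt=\|\mathbb{W}_q^q(F_{\tilde{X}_{\cdot}},F_{\tilde{Y}_{\cdot}})\|_{L^1(T)},
\]
which is exactly the asserted inequality. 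For the ``in particular'' bound, note that $(\tilde{X}_t,\tilde{Y}_t)$ is a coupling of $F_{\tilde{X}_t}$ and $F_{\tilde{Y}_t}$ for each $t$, so $\mathbb{W}_q^q(F_{\tilde{X}_t},F_{\tilde{Y}_t})\le \mathbb{E}[|\tilde{X}_t-\tilde{Y}_t|^q]$; integrating over $T$ gives $\|\mathbb{W}_q^q(F_{\tilde{X}_{\cdot}},F_{\tilde{Y}_{\cdot}})\|_{L^1(T)}^{1/q}\le \|\tilde{X}-\tilde{Y}\|_{L^q(T\times\Omega)}$, and substituting produces the factor $2$.

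I expect the main obstacle to be purely measure-theoretic bookkeeping rather than a genuine difficulty: one must check carefully that $F_{\tilde{Y}_t}\circ F_{\tilde{Y}_t}^{[-1]}=\mathrm{id}$ on $(0,1)$ for a continuous strictly increasing distribution function, that $U^X,U^Y\in(0,1)$ jointly a.e.\ in $(t,\omega)$, and that $(t,\omega)\mapsto F_{\tilde{Y}_t}^{[-1]}(U^X_t(\omega))$ and the analogous maps are jointly measurable, so that the applications of Fubini's theorem and the $L^q(T\times\Omega)$/$L^1(T)$ identities are legitimate; the càdlàg assumption on the processes and the strict monotonicity and continuity assumptions on $F_{\tilde{Y}}$ are precisely what make these routine.
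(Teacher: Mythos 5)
Your proof is correct and follows essentially the same route as the paper: both insert the middle term $F_{\tilde Y}(\tilde X)$ between $U^X$ and $U^Y$, use that the bounded density makes $F_{\tilde Y_t}$ Lipschitz with constant $\|f_{\tilde Y}\|_{\infty}$ uniformly in $t$, and identify the remaining term with $\|\mathbb{W}_q^q(F_{\tilde X_\cdot},F_{\tilde Y_\cdot})\|_{L^1(T)}^{1/q}$ through the one-dimensional closed form of the Wasserstein distance, since $U^X_t$ is uniform. The only small deviation is in the final ``in particular'' bound, where you use the elementary fact that $(\tilde X_t,\tilde Y_t)$ is a coupling of $F_{\tilde X_t}$ and $F_{\tilde Y_t}$ for each $t$, whereas the paper routes this through its infinite-dimensional optimal-coupling statement (Theorem \ref{T: Wasserstein-Copula Best Approximation}); your version is, if anything, more direct.
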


\begin{proof} Using the triangle inequality, we obtain
\begin{align*}
    \|U^Y-U^X\|_{L^q(T\times \Omega)}&\leq \|U^Y-F_{\tilde Y}(\tilde X)\|_{L^q(T\times \Omega)}+\|F_{\tilde Y}(\tilde X)-U^X\|_{L^q(T\times \Omega)} \nonumber \\
    &=:(1)+(2)
\end{align*}
Then for the first summand we have by the mean value inequality
\begin{align*}
    (1)^q=\|F_{\tilde Y}(\tilde Y)-F_{\tilde Y}(\tilde X)\|^q_{L^q(T\times \Omega)}= &\int_T \mathbb E [\vert F_{\tilde{Y}_t}(\tilde{Y}_t)-F_{\tilde{Y}_t}(\tilde{X}_t)\vert^q]dt \notag\\
    \leq & \|f_{\tilde Y}\|_{\infty}^q\int_T \mathbb E [\vert\tilde{Y}_t-\tilde{X}_t\vert^q]dt \notag\\
    = & \|f_{\tilde Y}\|_{\infty}^q\|\tilde Y- \tilde X\|_{L^q(T\times \Omega)}^q
\end{align*}
For the second summand we have again by the mean value theorem
\begin{align*}
    (2)^q= \|F_{\tilde Y}(F_{\tilde X}^{[-1]}(U^X_t))-U^X\|^q_{L^q(T\times \Omega)}= &\int_T\int_0^1 \vert F_{\tilde{Y}_t}(F_{\tilde{X}_t}^{[-1]}(u))-u\vert^qdudt\notag\\
    = &\int_T\int_0^1 \vert F_{\tilde{Y}_t}(F_{\tilde{X}_t}^{[-1]}(u))-F_{\tilde{Y}_t}(F_{\tilde{Y}_t}^{[-1]}(u))\vert^qdudt\notag\\
   \leq & \|f_{\tilde Y}\|_{\infty}^q \int_T\int_0^1 \vert F_{\tilde{X}_t}^{[-1]}(u)-F_{\tilde{Y}_t}^{[-1]}(u)\vert^qdudt\notag\\
   = &\|f_{\tilde Y}\|_{\infty}^q\|\mathbb{W}_q^q(F_{\tilde{X}_{\cdot}},F_{\tilde{Y}_{\cdot}})\|_{L^1(T)}.
\end{align*}
Moreover, since $\|\mathbb{W}_q^q(F_{\tilde{X}_t},F_{\tilde{Y}_t})\|_{L^1(T)}=\mathbb W_q^q(\tilde X, F_{\tilde Y}(U^X))$, which by Remark 4.5 can be estimated as, $$\mathbb W_q^q(\tilde X, F_{\tilde Y}(U^X))\leq \mathbb W_q^q(\tilde X, F_{\tilde Y}(U^Y))=\mathbb W_q^q(\tilde X,\tilde Y)\leq \mathbb E[\|\tilde X-\tilde Y\|_{L^q(T)}^q]=\|\tilde X-\tilde Y\|_{L^q(T\times \Omega)}^q,$$
also the second assertion follows.
\end{proof}
\begin{Example}\label{Ex: Gaussian copula approximation}
Assume that $U^Y$ is an elliptical copula corresponding to an elliptical random variable $\tilde{Y}$ in $L^2(T)$, that is
\begin{equation}\label{elliptical Random variable}
   \tilde Y=SV
\end{equation}
for some positive, real-valued random variable $S$ with finite second moment and a Gaussian process
$V\sim \mathcal N (0,C)$, independent of $S$ (see \cite{Boente2014} for the exact description and the relation to finite-dimensional elliptical distributions).
First, observe that without loss of generality we can assume $V_t\sim \mathcal N(0,1)$ 
since the process  $(\frac{\tilde{Y}_t}{\mathbb E [|\tilde{Y}_t|^2]})_{t\in T}$ has by Lemma  \ref{Lem: Same copulas means similarly ordered} the same copula as $\tilde Y$. 
If $S$ has finite inverse moment, then $\tilde{Y}_t$ has for each $t$ a bounded density, since by the formula for the density of two independent products, we get
\begin{equation*}
    f_{\tilde{Y}_t}(z)=\int_{-\infty}^{\infty} f_S(x)f_{V_t}(\frac zx)\frac 1{|x|}dx\leq \frac{1}{\sqrt{2\pi}}\int_{0}^{\infty}\frac{f_S(x)}{x}dx=\frac{\mathbb E [S^{-1}]}{\sqrt{2\pi}}.
\end{equation*}
Hence, for any other copula process $U^{\tilde X}$ corresponding to another process $\tilde{X}$ in $L^2(\Omega\times T)$, we have by Theorem \ref{Thm: Difference of copulas that stem from other processes} that
\begin{equation*}
    \|U^{\tilde{Y}}-U^{\tilde X}\|_{L^2(\Omega\times T)}\leq \sqrt{\frac{2}{\pi}}\mathbb E [S^{-1}] \|\tilde Y-\tilde X\|_{L^2(\Omega\times T)}.
\end{equation*}
\end{Example}


Copulas may be suitable to capture tail behaviour in functional data. This can be seen by the following example, combining the last ones.
\begin{Example}[Approximating Pareto marginals on an elliptical copula]
Assume that a process $Y$ given by
$Y_t:=F_t^{[-1]}(U_t)$ where $U$ is a copula process corresponding to an elliptical process $\tilde Y$ given by \eqref{elliptical Random variable} and the marginals $F_Y$ are regularly varying as in Example \ref{Ex: Regularly varying marginals}. More specifically we can take $F_Y$ to follow Pareto marginals, that is 
\begin{align*}
    l_t(x)= \begin{cases} \alpha_t x_{\text{min}}^{\alpha_t} & x\geq x_{min}\\
    0 & x\leq x_{min}
    \end{cases}
\end{align*}
for some constant $x_{min}>0$, such that $f_t$ are the densities of a Pareto distribution $Par(x_{min},\alpha_t)$, where $t\mapsto \alpha_t$ is assumed to be continuous. 
Assume now $\alpha_t>2+\gamma$ for some $\gamma>0$. Then $Y$ takes values in  $\mathcal{L}^2(T)$.

Consider a situation in which we can approximate the marginal function $F_{Y}$ by another marginal function $F_{n}$ (for example by empirical cumulative distribution functions).  
The underlying elliptical process is in the $L^2(T)$-norm best approximated over all processes with $n$ dimensional spectral decomposition by the projection
\begin{equation*}
    \tilde Y^n=\sum_{i=1}^{n} Z_i e_i 
\end{equation*}
of the
first $n$ principal components in the corresponding Karhunen–Lo\`eve expansion 
\begin{equation*}
    \tilde Y=\sum_{i=1}^{\infty} Z_i e_i .
\end{equation*}
Here $(e_i)_{i\in\mathbb N}$ is an  an orthonormal basis of eigenvectors of the covariance operator of $\tilde Y$, where the corresponding eigenvalues $(\lambda_i)_{i\in \mathbb N}$ are ordered decreasingly (see for instance Theorem (1.5) in \cite{Bosq2012} for a proof and \cite{Boente2014} for more optimality properties of the principal components for elliptical processes). 
Then
\begin{equation}\label{Approximation of principal components}
    \|\tilde{Y}_n-\tilde Y\|_{L^2(\Omega\times T)}^2=\sum_{i=n+1}^{\infty}\lambda_i.
\end{equation}
Assume that $U^n$ is the path copula process underlying $\tilde Y^n$.  
Let us investigate how well $Y^n=F_{n}^{[-1]}(U^n)$ approximates $Y$.
 We can choose $p=1$, $\epsilon=1$, $q=2$, $\beta=\frac 23$ and hence $\rho= \frac 13$, $m_t=x_{min}$ (using the notation of Theorem \ref{Thm:Robustness inequalty}) and $x_0^t=0$. Thus, by \eqref{Copula robustness estimate} 
 \begin{align}\label{Pareto first estimate}
        \mathbb E [\|Y-Y^n\|_{L^1(T)}]\leq &\|\mathbb{W}_1(F_{Y_{\cdot}},F_{n_{\cdot}})\|_{L^1(T)}+ K\|U^Y-U^n\|_{L^2(\Omega\times T)}^{\frac 13}.
     \end{align}
     Since in this case 
    \begin{align*}
        \mathbb E \left[f_t^{-\frac 23}(Y_t)\right]= \int_{x_{min}}^{\infty}\left(x^{-(\alpha_t+1)}\alpha_t x_{\text{min}}^{\alpha_t}\right)^{\frac 13}dx=\frac{3\alpha_t^{\frac 13} }{\alpha_t-2} x_{min}^{\frac 23}\leq \frac 3{\gamma} \alpha_t^{\frac 13} x_{\text{min}}^{\frac 23}
    \end{align*}
    and
    \begin{align*}
       \|Y\|_{L^{p+\epsilon}(\Omega\times T)}^{p+\epsilon}=\int_T \mathbb E [Y_t^2]dt=\int_T \frac{\alpha_t x_{min}^2}{\alpha_t-2}dt\leq \frac{x_{min}^2}{\gamma}\int_T \alpha_tdt
    \end{align*}
    we get by \eqref{Constant K}
    \begin{align}\label{Pareto constant K}
        K=  \left(\frac 6{\gamma}x_{\text{min}}^{\frac 23}\int_T \alpha_t^{\frac 13}  dt\right)^{\frac 12} \left( \frac{2}{\gamma}x_{min}^2\int_T \alpha_tdt\right)^{\frac 23}.
    \end{align}
    Thus, using also Example \ref{Ex: Gaussian copula approximation} and combining \eqref{Approximation of principal components}, \eqref{Pareto first estimate} and \eqref{Pareto constant K} we obtain
    \begin{align*}
       & \|Y-Y^n\|_{L^1(T\times \Omega)}\\
        \leq  &\|\mathbb{W}_1(F_{Y_{\cdot}},F_{n_{\cdot}})\|_{L^1(T)}+K \left(\sqrt{\frac{2}{\pi}}\mathbb E [S^{-1}]\right)^{\frac 13}\left(\sum_{i=n+1}^{\infty}\lambda_i\right)^{\frac 16}
    \end{align*}
and therefore, the convergence rate is $\frac 13$ of the convergence rate of the principal components and the rate of convergence induced by the Wasserstein distance of the marginals, which depends on the respective approximation technique for the marginals.
\end{Example}
The example above depicts a situation that can be investigated from the point of view of functional data analysis. 
As in finite dimensions the estimation of the underlying covariance matrix corresponding to an underlying elliptical copula is conducted via infering on the rank correlation, such as Kendall's $\tau$ or Spearman's $\rho$. A generalization of these objects and their estimation would have to be generalised to the functional setting. Nevertheless, this is a logical next step and is hence an appealing strand that is left for future research.

\subsection*{Acknowledgements}
This research was funded within the project STORM: Stochastics for Time-Space Risk Models, from the Research Council of Norway (RCN). Project number: 274410.

\bibliographystyle{amsplain}
\addcontentsline{toc}{section}{\refname}\bibliography{Bibliography}

\appendix

\section{Copulas in Finite Dimensions}\label{sec: Copulas in finite dimensions}
We begin with the definition of finite-dimensional cumulative distribution functions. 
\begin{Definition}
Let $J$ be a finite set.
A function $F:(-\infty,\infty)^J\to [0,1]$ is a cumulative distribution function on $\mathbb{R}^J$, if
\begin{itemize}
\item[(a)] $\lim_{x_j\to\infty,j\in J}F((x_j)_{j\in J})=1$;
\item[(b)] For each $j_0 \in J$ $\lim_{x_{j_0}\to -\infty}F((x_j)_{j\in J})=0$;
\item[(c)] For each $i\in J$ the function $t\to F((x_j)_{j\in J\setminus \lbrace i \rbrace},(t)_{j=i})$ is right-continuous for each $(x_j)_{j\in J\setminus \lbrace i\rbrace} \in \mathbb{R}^{J\setminus \lbrace i\rbrace}$; 
\item[(d)] The $F$-volume of a multivariate interval $[a,b]:=\times_{j\in J}[a_j,b_j]$ 
\begin{equation}
V_F([a,b]):=\sum_{v\in \prod_{j\in J}\lbrace a_j,b_j\rbrace} sign(v) F(v)
\end{equation}
is nonnegative,  that is $V_F([a,b])\geq 0$ for all $[a,b]:=\prod_{j\in J}[a_j,b_j]\subset \mathbb{R}^J$. (Recall that the function $sign:\prod_{j\in J}\lbrace a_j,b_j\rbrace\to\lbrace -1,1\rbrace$ is given by $sign(v)=(-1)^{N(v)}$, where $N(v)=\#\lbrace j\in J: v_j=a_j\rbrace$.)
\end{itemize}
\end{Definition}
\begin{Definition}\label{D: Finite dimensional Copula}
Let $J$ be an arbitrary finite set. A copula on $\mathbb{R}^J$ is a cumulative distribution function $C:[0,1]^J \to [0,1]$ with uniform marginal distributions, i.e. for all $u\in [0,1]$ we have
$$C((1)_{j\in J\setminus \lbrace i \rbrace},(u)_{j=i})=u.$$
\end{Definition}
Equivalently, each copula $C$ can be uniquely identified with a probability measure $\mu_C$, with cumulative distribution function $C$. 

For the theory of copulas, the most important result is Sklar's Theorem:
\begin{Theorem}[Sklar's Theorem in finite dimensions]\label{T: Finite Sklar}
Let $J$ be an arbitrary finite set.
Let $F$ be a cumulative distribution function on $\mathbb{R}^J$ with marginal one-dimensional cumulative distribution functions $F_j$ for each $j\in J$.
 Then there exists a copula $C$ on $\mathbb{R}^J$, such that for all $(x_j)_{j\in J}\in (-\infty,\infty)^J$ we have
\begin{equation}\label{E:Finite Copula Property}
F((x_j)_{j\in J})=C((F_j((x_j))_{j\in J}).
\end{equation}
If the marginals $F_j$ are continuous for each $j\in J$, $C$ is unique.
If conversely $C$ is a copula on $\mathbb{R}^J$ and $F_j$ are one-dimensional cumulative distribution functions for each $j\in J$, then $F$ defined by \eqref{E:Finite Copula Property} is a cumulative distribution function on $\mathbb{R}^J$ with marginals $F_j$ for each $j\in J$.
\end{Theorem}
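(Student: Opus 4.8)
The plan is to transfer both halves of the infinite-dimensional Sklar's Theorem \ref{T: Sklar in infinite dimensions} to the finite set $J$, using the fact that on $\mathbb{R}^J$ (with $J$ finite) a probability measure and its cumulative distribution function carry exactly the same information. In particular a copula measure on $\mathbb{R}^J$ in the sense of Definition \ref{T: Consistent Copulas} (taken with $I=J$) and a copula on $\mathbb{R}^J$ in the sense of Definition \ref{D: Finite dimensional Copula} are the same object, identified via the CDF. So it suffices to reproduce the constructions in the proof of Theorem \ref{T: Sklar in infinite dimensions}, now producing genuine distribution functions, and to argue uniqueness and the converse directly (the latter to avoid circularity, since the body's uniqueness argument for Theorem \ref{T: Sklar in infinite dimensions} already appeals to the finite case).

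\textbf{Existence.} I would take a random vector $X=(X_j)_{j\in J}$ whose law has CDF $F$ and an independent $U\sim\mathrm{Unif}[0,1]$, form the distributional transform $U_j:=F_j(X_j-)+U\bigl(F_j(X_j)-F_j(X_j-)\bigr)$ for $j\in J$, and let $C$ be the CDF of $(U_j)_{j\in J}$. Each $U_j$ is uniform on $[0,1]$, so $C$ has uniform one-dimensional marginals and is a copula. By the distributional transform (Theorem \ref{T: Rüschendorf Transform}) one has $X_j=F_j^{[-1]}(U_j)$ a.s., with $F_j^{[-1]}$ as in \eqref{Inverse Transform}, so that the events $\{X_j\le x_j\}$ and $\{U_j\le F_j(x_j)\}$ differ only on the null set $\{U_j=F_j(x_j)\}$; intersecting over $j\in J$ gives $F((x_j)_{j\in J})=C((F_j(x_j))_{j\in J})$, i.e.\ \eqref{E:Finite Copula Property}. (Equivalently, this is just \eqref{Sklar Property} in Theorem \ref{T: Sklar in infinite dimensions} read with $I=J$.)

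\textbf{Uniqueness under continuity.} If every $F_j$ is continuous then $F_j(X_j-)=F_j(X_j)$, the transform degenerates to $U_j=F_j(X_j)$, and any copula $C$ satisfying \eqref{E:Finite Copula Property} must coincide with the law of $(F_j(X_j))_{j\in J}$ on the set $\prod_{j\in J}F_j(\mathbb{R})$. Since each continuous CDF satisfies $F_j(\mathbb{R})\supseteq(0,1)$, this set is dense in $[0,1]^J$; as every copula is (uniformly) continuous on $[0,1]^J$ — indeed $1$-Lipschitz in each coordinate, by the nonnegativity of its $C$-volumes — it is determined by its values on this dense set. Hence $C$ is unique.

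\textbf{Converse and main obstacle.} Given a copula $C$ and one-dimensional CDFs $F_j$, I would define $F$ by \eqref{E:Finite Copula Property} and verify the axioms (a)--(d) together with the marginal identities: (a) from $C(1,\dots,1)=1$; (b) because $C$ is grounded; (c) from continuity of $C$ composed with right-continuity of each $F_j$; (d) because the $F$-volume of $\prod_{j\in J}[a_j,b_j]$ equals, after relabelling the vertices, the $C$-volume of $\prod_{j\in J}[F_j(a_j),F_j(b_j)]$ (where $F_j(a_j)\le F_j(b_j)$ by monotonicity), which is nonnegative; and sending every coordinate except the $i$-th to $+\infty$ and invoking the uniform-marginal property of $C$ recovers $F_i$. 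The only genuinely non-bookkeeping point is the uniqueness step, which relies on both the density of the range of a continuous marginal in $[0,1]$ and the $1$-Lipschitz/continuity property of copulas; everything else is the routine axiom check, as in \cite{Nelsen2006}.
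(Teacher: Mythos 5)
Your proof is correct, but it does not follow the paper's route: the paper gives no argument of its own for Theorem \ref{T: Finite Sklar} and simply refers to Nelsen \cite{Nelsen2006}, whose classical proof constructs a subcopula on the product of the closures of the ranges $F_j(\mathbb{R})$ and extends it to a copula by multilinear interpolation. You instead give a probabilistic proof: existence via the distributional transform of Theorem \ref{T: Rüschendorf Transform} (exactly the device the paper uses for the infinite-dimensional Theorem \ref{T: Sklar in infinite dimensions}), uniqueness from the density of $\prod_{j\in J} F_j(\mathbb{R})$ in $[0,1]^J$ together with the $1$-Lipschitz continuity of copulas, and the converse by a direct check of the CDF axioms for $C\circ (F_j)_{j\in J}$. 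This buys coherence with the paper's own machinery and avoids the interpolation/extension step, and you correctly guard against circularity (the uniqueness step of Theorem \ref{T: Sklar in infinite dimensions} appeals to the finite-dimensional statement, so arguing uniqueness and the converse directly is the right move). Two minor points, neither of which affects correctness: the identity $X_j=F_j^{[-1]}(U_j)$ a.s.\ is a property of the distributional transform established in \cite{Ruschendorf2009} but not literally contained in the statement of Theorem \ref{T: Rüschendorf Transform} as reproduced in the appendix --- although existence also follows at once from that statement, since it already asserts that the law of $(U_j)_{j\in J}$ is a copula of $F$; and the Lipschitz bound $|C(u)-C(v)|\leq \sum_{j\in J}|u_j-v_j|$ you invoke requires the uniform marginals in addition to nonnegativity of the $C$-volumes, which is worth saying explicitly when deriving it from Definition \ref{D: Finite dimensional Copula}.
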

\begin{proof}
See for example Nelsen \cite{Nelsen2006}. 
\end{proof}
We use the construction of copulas by distributional transforms from \cite{Ruschendorf2009}:
\begin{Theorem}\label{T: Rüschendorf Transform}
Let $J$ be a finite set, $F=F_{J}$ be a cumulative distribution function on $\mathbb{R}^J$ with marginals $F_j,j\in J$. Let $X=(X_j)_{j\in J}$ be a random vector with law $F$. 
Let $U$ be uniformly distributed on $[0,1]$ and independent of $X$.
Then a copula of $F$ is given by the cumulative distribution function corresponding to the random vector $(U_1,...,U_d)$, defined by
\begin{equation}\label{Distributional transform}
U_i=F_{X_i}(X_i-)+U(F(X_i)-F(X_i-)).
\end{equation}
\end{Theorem}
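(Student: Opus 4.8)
The plan is to isolate two one–dimensional facts about the distributional transform and then glue the coordinates together exactly as in the proof of Theorem~\ref{T: Sklar in infinite dimensions}. Write $F_i:=F_{X_i}$, set
\[
U_i:=F_i(X_i-)+U\bigl(F_i(X_i)-F_i(X_i-)\bigr),\qquad i\in J,
\]
and let $C$ be the distribution function of $(U_i)_{i\in J}$. The two facts I would establish, coordinatewise, are: \textbf{(a)} each $U_i$ is uniformly distributed on $[0,1]$, so that $C$ has uniform one–dimensional marginals and is therefore a copula in the sense of Definition~\ref{D: Finite dimensional Copula}; and \textbf{(b)} $F_i^{[-1]}(U_i)=X_i$ almost surely. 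Granting (a) and (b), for any $(x_i)_{i\in J}\in\mathbb R^{J}$ one has $\mathbb P(X_i\le x_i,\ \forall i)=\mathbb P\bigl(F_i^{[-1]}(U_i)\le x_i,\ \forall i\bigr)$ by (b); and since the events $\{F_i^{[-1]}(U_i)\le x_i\}$ and $\{U_i\le F_i(x_i)\}$ differ only inside $\{U_i=F_i(x_i)\}$, which is $\mathbb P$–null because $U_i$ is uniform, the same set–inclusion argument used in the second part of the proof of Theorem~\ref{T: Sklar in infinite dimensions} gives $\mathbb P(X_i\le x_i,\ \forall i)=\mathbb P(U_i\le F_i(x_i),\ \forall i)=C\bigl((F_i(x_i))_{i\in J}\bigr)$, which is \eqref{E:Finite Copula Property}.

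To prove (a) and (b) I would fix $i$ and write $Y:=X_i$, $G:=F_i$, $\Delta(y):=G(y)-G(y-)$. Since $U$ is independent of $X$, the conditional law of $U_i$ given $\{Y=y\}$ is the point mass at $G(y)$ when $\Delta(y)=0$ and the uniform law on $[G(y-),G(y)]$ when $\Delta(y)>0$; hence $\mathbb P(U_i\le t\mid Y=y)$ equals $\bigl(\min(G(y),t)-\min(G(y-),t)\bigr)/\Delta(y)$ when $\Delta(y)>0$ and $\1_{\{G(y)\le t\}}$ when $\Delta(y)=0$. One then checks that the measure $\mathbb P(U_i\le t\mid Y=\cdot)\,dG$ coincides with the Lebesgue–Stieltjes measure $dH_t$ of the nondecreasing, right–continuous function $H_t:=\min(G(\cdot),t)$, so that $\mathbb P(U_i\le t)=\int dH_t=\min(1,t)-\min(0,t)=t$ for $t\in[0,1]$; this is (a). For (b), note $U_i\in[G(Y-),G(Y)]$ by construction, so the equivalence $G^{[-1]}(u)\le x\iff u\le G(x)$ yields $G^{[-1]}(U_i)\le Y$ a.s.; the reverse inequality can fail only if $U_i=G(Y-)$, which on $\{\Delta(Y)>0\}$ has conditional probability $0$ (it is the left endpoint of a nondegenerate interval under a uniform law) and on $\{\Delta(Y)=0\}$ forces $Y$ into $\{y:G^{[-1]}(G(y))<y\}$, a countable union of constancy plateaus of $G$ each carrying zero $dG$–mass and hence $\mathbb P$–null. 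Thus $G^{[-1]}(U_i)=Y$ a.s.

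The genuinely delicate point is (b): establishing almost–sure invertibility of the quantile transform requires discarding the left endpoints of the atoms of $G$ and the flat stretches of $G$, and this is precisely where independence of $U$ from $X$ and the atomlessness of the uniform law enter. Everything else — the conditional law of $U_i$ given $Y$, the identification $\mathbb P(U_i\le t\mid Y=\cdot)\,dG=dH_t$, and the telescoping of that Stieltjes integral to $t$ — is routine, and the concluding passage to \eqref{E:Finite Copula Property} reuses verbatim the null–set computation already recorded in the proof of Theorem~\ref{T: Sklar in infinite dimensions}.
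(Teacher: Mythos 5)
Your proposal is correct. Note, however, that the paper does not actually prove Theorem \ref{T: Rüschendorf Transform}: its ``proof'' is a citation to \cite{Ruschendorf2009}, so there is no in-paper argument to compare against. What you have written is essentially the standard proof of the distributional-transform lemma from that reference: establish coordinatewise that (a) $U_i$ is uniform and (b) $F_i^{[-1]}(U_i)=X_i$ almost surely, then read off $F((x_i)_{i\in J})=C((F_i(x_i))_{i\in J})$ from the definition of $C$ as the law of $(U_i)_{i\in J}$. Your verification of (a) via the identity $\mathbb P(U_i\le t\mid Y=y)\,dG(y)=dH_t(y)$ with $H_t=\min(G(\cdot),t)$ checks out (on $\{G<t\}$ the conditional probability is $1$ and $dH_t=dG$; past the crossing point both measures vanish $dG$-a.e.; the atom at $y_t=G^{[-1]}(t)$ carries mass $t-G(y_t-)$ on both sides), and your treatment of (b) correctly isolates the only two failure modes, $U=0$ on an atom of $G$ and $Y$ lying strictly inside a constancy plateau of $G$, both null. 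Two cosmetic remarks: the equivalence $G^{[-1]}(u)\le x\iff u\le G(x)$ is exact for right-continuous $G$, so the events $\{F_i^{[-1]}(U_i)\le x_i\}$ and $\{U_i\le F_i(x_i)\}$ coincide and the extra null-set bookkeeping there is not needed; and you have silently (and correctly) read the displayed formula \eqref{Distributional transform} as $U_i=F_{X_i}(X_i-)+U\bigl(F_{X_i}(X_i)-F_{X_i}(X_i-)\bigr)$, repairing the paper's typo. Your argument also delivers the a.s. inversion $F_i^{[-1]}(U_i)=X_i$, which is slightly more than the theorem asserts but is exactly the property the paper later exploits in the proof of Theorem \ref{T: Sklar in infinite dimensions}.
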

\begin{proof}
See \cite{Ruschendorf2009}.
\end{proof}
\end{document}